\newlength\colOne 
\newlength\colTwo
\newcommand\DivWidth[3]{\setlength{#1}%
   {(\widthof{#2}-\tabcolsep*(2*{#3}-2)-\arrayrulewidth*({#3}-1))/{#3}}}
\newcolumntype{C}[1]{>{\centering\let\newline\\\arraybackslash\hspace{0pt}}p{#1}}
\newlength{\rad}
\setlist{noitemsep}
\theoremstyle{definition}
\newtheorem{thm}{Theorem}[section]
\newtheorem*{thm*}{Theorem}
\newtheorem{prop}[thm]{Proposition}
\newtheorem{lem}[thm]{Lemma}
\newtheorem{cor}[thm]{Corollary}
\newtheorem{defn}[thm]{Definition}
\newtheorem{remark}[thm]{Remark}
\newtheorem{exa}[thm]{Example}
\newcommand{\Blb}{\mathbb{B}}
\newcommand{\Nbb}{\mathbb{N}}
\newcommand{\Zbb}{\mathbb{Z}}
\newcommand{\Acal}{\mathcal{A}}
\newcommand{\Pcal}{\mathcal{P}}
\DeclareMathOperator{\al}{\alpha}
\DeclareMathOperator{\tr}{tr}
\DeclareMathOperator{\ellb}{lb}
\DeclareMathOperator{\lmp}{lmp}
\DeclareMathOperator{\rb}{rb}
\DeclareMathOperator{\rmp}{rmp}
\DeclareMathOperator{\supp}{supp}
\title{From free idempotent monoids to free multiplicatively idempotent rigs}
\author{Morgan Rogers\footnote{email: rogers@lipn.univ-paris13.fr}}
\affil{\small{LIPN -- UMR 7030 CNRS, Universit\'{e} Sorbonne Paris Nord}}
\date{\small{}}
\begin{document}

\maketitle

\begin{abstract}
A multiplicatively idempotent rig (which we abbreviate to \emph{mirig}) is a rig satisfying the equation $r^2 = r$. We show that a free mirig on finitely many generators is finite and compute its size. This work was originally motivated by a collaborative effort on the decentralized social network Mastodon to compute the size of the free mirig on two generators.
\end{abstract}

\section*{Introduction}

The algebraic and combinatorial problems addressed in this paper arose in the context of an online discussion. As such, this paper serves in part to illustrate the role of online interactions in the practice of mathematics: questions facilitated by the low stakes of such exchanges can lead to informal collaborations, unearthing of obscure references and curiosity-driven research. We hope that the present paper, where we make an effort to credit those involved, will encourage people to engage in online discussions about mathematics, since the relatively low barrier of entry can serve as an on-ramp for the mathematically curious.

On the other hand, our record of the discussion which motivated the writing of this paper shows the volatility of such interactions as a record of communal knowledge: while evidence of these interactions may persist for the medium term\footnote{We shall not speculate in detail on the life expectancy of the platforms where these discussions took place, but the history of the internet leads us to expect a span of years or decades rather than centuries.}, they have been buried in the intervening time to the extent that a casual reader would have practically no chance of coming across the discussions we reference here. A conclusion of the present work, therefore, is that academics leading or participating in online discussions (specifically in mathematics or domains where knowledge can be gained without recourse to controlled experiments) should also engage in the practice of formalizing the outcomes of those discussions into a form more widely accessible, while ensuring adequate attributions to and consent from the participants. The format of open-access publication seems particularly well-adapted to this aim. We should be clear that this paper goes well beyond the content of the discussions that inspired it, driven by the author's own curiosity.

From a mathematical perspective, the subject of this paper is the study of \textit{multiplicatively idempotent rigs} (mirigs, Definition \ref{def:mirig}) and more specifically \textit{free} mirigs. Perhaps surprisingly, these are finite (Lemma \ref{lem:Rnfinite}). Accordingly, the main problem that we address in this paper is `\textbf{How many elements does the free mirig on $n$ generators have?}' The original discussions went as far as establishing the size of the free mirig on two generators ($284$ elements, Example \ref{exa:R2count}) via computational methods which would be difficult to scale up. Here we get as far as computing the size of the free mirig on three generators, developing enough understanding of these structures to be able to effectively carry out the computation by hand. To summarize, the sequence computed thus far is:
\[|R_n| = 4,13,284,510605,\dotsc\]

The structure of the paper is as follows.

In Section \ref{sec:Boole}, we present the question asked by Jean-Baptiste Vienney which began the discussion and give historical context for the concepts featuring in his question. In particular, we record the (potentially surprising) fact that mirigs need not be commutative and explain why this leads us to disassociate the name `Boole' from these structures.

In Section \ref{sec:idMon}, we summarise results from a paper of Green and Rees \cite{GreenRees} regarding \textit{idempotent monoids} (also called \textit{bands}), whose structure is essential to understanding mirigs, and deriving a representation of elements of idempotent monoids by a certain kind of binary tree (Section \ref{ssec:trees}).

Using the tree presentation, we examine the subsemigroups of free idempotent monoids in Section \ref{sec:subsemi}. More particularly, we aim to understand their \textit{replete} subsemigroups (Definition \ref{defn:replete}) because these will be used later as part of the data representing elements of free mirigs. In the process, we extract some further combinatorial objects (leftmost and rightmost paths, Definition \ref{defn:leftmost}) simplifying the presentation of replete subsemigroups.

In Section \ref{sec:mirig}, we establish some basic theory for mirigs, including their \textit{characteristic} (Lemma \ref{lem:char}) and provide an example promised earlier illustrating non-commutativity. From Section \ref{ssec:direct} onwards we focus in on free mirigs and show that these can be presented in terms of formal sums of trees (\textit{forests} and \textit{thickets}, Definition \ref{defn:thickets}), eventually deriving from these a representation of elements of free mirigs involving replete subsemigroups of free idempotent monoids (\textit{complementary triples}, Definition \ref{defn:complementary}). From here we proceed to the computation of the sizes of free mirigs. Our biggest result is that the free mirig on three generators has $510605$ elements; see Example \ref{exa:R3count}.

Finally, in Section \ref{sec:examples}, we go beyond free mirigs in order to record some examples of mirigs which featured in the original discussions.

\subsection*{Acknowledgements}

I would like to thank all of the participants in the discussions that inspired this article, who are variously named at relevant points. I would also like to thank the friends and colleagues who humoured me in musing about the curiously large numbers that arose in this investigation.

\section{Rigs (feat. George Boole)}
\label{sec:Boole}

On December 19th 2022, Jean-Baptiste Vienney asked the internet (both on the \href{https://mattecapu.github.io/ct-zulip-archive/stream/266967-general:-mathematics/topic/topic_Are.20boolean.20rigs.20commutative.3F.html#316746498}{Category Theory Zulip forum} and on the \href{https://nforum.ncatlab.org/discussion/15707/boolean-rig/#Item_1}{nForum}) whether a rig in which all elements are multiplicatively idempotent is commutative.

\begin{defn}
\label{def:rig}
Recall that a \textbf{rig} (sometimes called a \textit{semiring}) consists of a set $R$ containing distinguished elements $0$ and $1$ and equipped with binary operations ${+}$ and ${\cdot}$ (called \textit{addition} and \textit{multiplication}) such that for all elements $x,y,z \in R$ the following eight (technically eleven) axioms are satisfied:
\begin{align*}
(x+y)+z &= x+(y+z) & x+0 = x &= 0+x \\
x+y &= y+x & x \cdot 1 = x &= 1 \cdot x\\
(x\cdot y) \cdot z &= x \cdot (y \cdot z) & x \cdot 0 = 0 &= 0 \cdot x\\
x \cdot (y+z) &= (x\cdot y) + (x \cdot z) & & \\
(x+y) \cdot z &= (x\cdot z) + (y \cdot z). & &
\end{align*}
In the present article we shall drop the symbol ${\cdot}$, writing $xy$ for $x \cdot y$, and use the associativity axioms (the first and third in the left-hand list) to drop parentheses. We can also drop parentheses on the right-hand side of the distributive laws (fourth and fifth in the left-hand list) by adopting the convention that multiplication takes precedence over addition. We follow the usual convention of writing a number $n$ to represent the sum of $n$ copies of $1$, so $1+1 = 2$ and $x+x+x = 3x$. Similarly, we write $x^n$ for $n$ copies of $x$ combined by multiplication.
\end{defn}

The motivating example of a rig is that of the \textit{natural numbers}, where addition and multiplication take their usual interpretations. However, the multiplication operation in the rigs we will consider here will behave rather differently to the multiplication of natural numbers.

\begin{defn}
\label{def:mirig}
A \textbf{multiplicatively idempotent rig}, which we abbreviate to \textbf{mirig}, is a rig (Definition \ref{def:rig}) in which the additional equation $x^2 = x$ is satisfied for all elements $x$.
\end{defn}

Before turning to mirigs, which are the subject of this article, we should mention that Vienney used the term `Boolean rigs', which is still the name associated with this structure on the nLab at time of writing \cite{nLabBoolRig}. It's worth examining why he chose this name and why we depart from it.

Much more widely studied than the rigs of Definition \ref{def:rig} are \textbf{rings}, where an axiom is added ensuring that each element has an additive inverse: a ri\textbf{n}g is a rig with \textbf{n}egatives\footnote{These naming conventions historically arose in the reverse order, of course.}. Formally, we recover the definition of ring by adding to Definition \ref{def:rig} a unary operation $-$ subject to the condition $x+(-x) = 0$. We can deduce that $-x = (-1) \cdot x$ and $(-1)^2 = 1$ in any ring. 

Amongst rings, those most relevant in classical logic and lattice theory are \textbf{Boolean rings}: rings in which $x^2 = x$ for every element $x$, such as the two-element ring $\Zbb/2\Zbb$. Any Boolean ring is commutative ($xy = yx$ for all $x,y$), which is demonstrated by first observing that $-1 = (-1)^2 = 1$, then that
\begin{equation}
\label{eq:BRcomm}
x + y = (x + y)^2 = x^2 + xy + yx + y^2 = x + xy + yx + y,	
\end{equation}
and finally $xy - yx = xy + yx = 0$ by subtracting $x+y$.

Boolean rings are named for their connection with \textit{Boolean algebras}, which were introduced by George Boole a in the 19th century, \cite{Boole}. For the sake of completeness, we include a definition of Boolean algebra here.

\begin{defn}
\label{def:BA}
A \textbf{Boolean algebra} is a rig $B$ whose distinguished elements $0$ and $1$ we rename to $\bot$ and $\top$, respectively, and whose binary operations ${+}$ and ${\cdot}$ are denoted ${\vee}$ and ${\wedge}$ and referred to as \textit{join} and \textit{meet}, respectively\footnote{Curiously, Boole used the notation we introduced above for addition and multiplication for the meet and join operations. We use the modern notation here to distinguish Boolean algebras from Boolean rings.}. $B$ is equipped with a unary \textit{negation} operation ${\neg}$ subject to the following axioms:
\begin{align*}
x \wedge x &= x &
x \wedge \neg x &= \bot \\
x \vee x &= x &
x \vee \neg x &= \top
\end{align*}
\end{defn}
The archetypal Boolean algebra is $\Blb = \{\top,\bot\}$, whose elements are typically interpreted as `truth values'. This is just one of many possible axiomatizations, in which idempotency of both operations is assumed. From these, we can deduce consequences including: double negation elimination ($x = \neg\neg x$); uniqueness of negation (if $x \vee z = \top$ and $x \wedge z = \bot$, then $z = \neg x$), and the De Morgan laws ($\neg(x \vee y) = \neg x \wedge \neg y$ and dually $\neg(x \wedge y) = \neg x \vee \neg y$). The last of these enables us to deduce commutativity of meet from commutativity of join. It is also possible to define an ordering from the meet and join operations making Boolean algebras a special case of \textit{distributive lattices}, although we shall not explore that in depth here.

Any Boolean ring gives rise to a Boolean algebra with the same elements (up to renaming) by defining the negation, join and meet operations respectively as $\neg x := 1+x$, $x \vee y := x+y+xy$ and $x \wedge y = xy$. Conversely, any Boolean algebra yields a Boolean ring by taking addition to be \textit{symmetric difference}, $x+y := (x \wedge \neg y) \vee (\neg x \wedge y)$ and multiplication to be meet. The Boolean ring $\Zbb/2\Zbb$ gives rise to the Boolean algebra $\Blb$ under this correspondence.

\begin{remark}\label{rem:idemmon1}
Observe that if we keep only the meet operation of Boolean algebras, we arrive at a correspondence between meet semilattices and commutative idempotent monoids; we shall examine idempotent monoids in more detail in Section \ref{sec:idMon}.
\end{remark}


To generalize Boolean rings, several incompatible definitions of `Boolean semiring' have been introduced in the past sixty years, as discussed by Guzm\'{a}n in \cite{BSring}. Guzm\'{a}n's own proposed definition comes from considering the variety of rigs generated by the ring $\Zbb/2\Zbb$ and the rig $\Blb$, which in particular includes both Boolean rings and Boolean algebras viewed as rigs (ignoring negation and subtraction, respectively, although they can be recovered when they exists).

Guzm\'{a}n characterizes this variety of rigs as those in which all elements $x$ satisfy the equations $1+x+x = 1$ and $x^2 = x$. These rigs are commutative by the following reasoning, which is rather more involved than \eqref{eq:BRcomm}.
\begin{equation}
\label{eq:xy=xyx}
\begin{split}
	xy &= xy(1+x+x) = x(y+yx+yx) \\
	&= x((y+yx)^2+yx) = x(y+yx+yx+yxy+yx) \\
	&=x(y+yxy+yx)=xy+xyxy+xyx \\
	&=xyx(y+y+1)=xyx. %
\end{split}
\end{equation}
Performing the same sequence of substitutions on the left instead, we have $yx = xyx$ too, so we conclude that $xy=yx$.

We can define the meet, join and negation operations for Guzm\'{a}n's Boolean semirings as we did for Boolean rings to obtain a distributive lattice structure satisfying $x \vee \neg x = 1$, but where the interaction between meet and negation is weakened to $x \wedge \neg x = x \wedge \neg 1$. Note that the join thus defined is still idempotent since:
\begin{equation}
\label{eq:Guzmanjoin}
x \vee x = x+x+x^2 = x+x^2+x^2 = x(1+x+x) = x.
\end{equation}
Guzm\'{a}n calls the resulting structures \textit{partially complemented distributive lattices}, but they are still essentially Boolean in flavour, in the sense that $0$ and $1$ play a crucial role as the top and bottom elements (with $1+1 = \neg 1$ lying between these two values), and one could conceivably treat such a lattice as a collection of `truth values' for a suitably constructed logic.

\begin{remark}
\label{rmk:termclash}
Beware that other authors have claimed the term `Boolean semiring' for distinct notions. For instance, Guterman decided in \cite{Guterman} to define a Boolean semiring as a subsemiring of a powerset algebra (a Boolean algebra), a definition inconsistent with the usual definition of Boolean rings given above.
\end{remark}

All of the structures discussed so far have been shown to acquire commutativity indirectly from idempotency in some form. Vienney's question is thus a very natural one to ask. Comparing Guzm\'{a}n's definition of Boolean semiring with our naive definition of mirig in Definition \ref{def:mirig}, we see that it can be rephrased as: ``Is the axiom $1+x+x=1$ necessary to show that multiplication is commutative?''

The present author observed that there exists a non-commutative mirig if and only if the free mirig on two generators is non-commutative, so that considering the free mirig on two generators would resolve Vienney's question. Before that rig could be investigated (see Section \ref{ssec:direct}), a less daunting counterexample was provided by a construction of Tim Campion; see Example \ref{exa:idmon2mirig}. This counterexample relied on the existence of non-commutative idempotent monoids, the simplest to describe of which is the free idempotent monoid on two generators; see Example \ref{exa:M2}.

With these examples, it became clear that the link with logic that justified the (already increasingly tenuous) association of the earlier structures with Boole is broken. The derived meet operation fails to be commutative as soon as the multiplication operation does, while the join operation fails to be idempotent in free mirigs, even with zero generators (see Example \ref{exa:R0}, where $1 \vee 1 = 1+1+1 \neq 1$). Thus there is no longer a canonical ordering on mirigs that might endow their elements with a logical interpretation, and as such we have chosen not to associate the adjective `Boolean' with our objects of study.

\begin{remark}
\label{rem:terminology}
Before moving on we should make a final point about our choice of terminology. Graham Manuell pointed out in the Zulip discussion that the term \textit{idempotent semiring} is already used to refer to a rig in which the \textit{addition} is idempotent, so $x+x = x$ for all $x$. This includes the \textit{tropical semiring} of natural numbers where addition is the $\max$ operation and multiplication is the usual addition of natural numbers, which can cause some confusion at first sight.

It is for this reason that we selected the terminology `mirig', which is close to a name suggested by John Baez. We shall examine the intersection of the class of additively idempotent rigs with the class of mirigs in Section \ref{sec:examples}.
\end{remark}

\section{Idempotent monoids}
\label{sec:idMon}

To begin, let us state the definition of our preliminary objects of interest.

\begin{defn}
\label{def:idmon}
An \textbf{idempotent monoid}\footnote{Beware that these are commonly referred to as \textbf{bands} in semigroup theory literature, although we shall not employ this term.} is a set $M$ endowed with a unit element $1 \in M$ and a binary multiplication operation $(x,y) \mapsto xy$ such that $xx = x$ for every element $x$.
\end{defn}

The free (not necessarily commutative) rig on a given set of generators $A$ can be constructed in two steps as the free commutative monoid on the free monoid on $A$. Correspondingly, we can construct the free \textit{mirig} on $A$ as a quotient of the free commutative monoid on the free \textit{idempotent} monoid on $A$. A thorough understanding of free idempotent monoids will be indispensible for studying free mirigs, so we spend this section establishing all of the relevant theory.
%

We denote by $[n]$ an $n$-element set; where a specific set is required for generic $n$, this can be taken to be $\{0,1,\dotsc,n-1\}$, but for small $n$ we shall typically take this to be the set consisting of the first $n$ letters of the roman alphabet $\{a,b,c,\dotsc\}$. Let $F_n$ denote the free monoid on $[n]$, whose elements can be presented as words formed from the generators. Let ${\sim}$ be the congruence on $F_n$ generated by $w \sim ww$ for all words $w$. The free idempotent monoid on $n$ generators is $M_n := F_n /{\sim}$.

\begin{exa}
\label{exa:M2}
The free idempotent monoid on zero generators is the trivial monoid, while for one generator we have the underlying multiplicative monoid of $\Zbb/2\Zbb$. The free idempotent monoid on two generators $\{a,b\}$ can be represented by the set of square-free words on the generators, namely, $\{\epsilon,a,b,ab,ba,aba,bab\}$. One can prove that any word in $\{a,b\}^*$ is ${\sim}$-equivalent to a unique one of these square-free words. Note that since $ab \not\sim ba$, $M_2$ is not commutative.
\end{exa}

\begin{thm}[{\cite[Theorem 1]{GreenRees}}]
\label{thm:idmonfin}
$M_n$ is finite for all $n$, with cardinality
\begin{equation}
\label{eq:sum}
 	\sum_{k=0}^n \binom{n}{k} \prod_{i=1}^k (k-i+1)^{2^i}.
 \end{equation}
\end{thm}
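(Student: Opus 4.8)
The plan is to stratify $M_n$ by content and reduce the count to a simple recursion. For a word $w \in F_n$, the set $\supp(w) \subseteq [n]$ of letters occurring in $w$ is unchanged by the generating relations $w \sim ww$, so it descends to a map $\supp \colon M_n \to \Pcal([n])$. Writing $M_n^S$ for the fibre over $S$, restriction along the inclusion $S \hookrightarrow [n]$ (which is split, by the homomorphism killing the letters outside $S$) bijects $M_n^S$ with the set of full-content elements of the free idempotent monoid on $S$; in particular $|M_n^S|$ depends only on $|S|$, say $|M_n^S| = c_k$ when $|S| = k$, so that $|M_n| = \sum_{k=0}^n \binom{n}{k} c_k$. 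It therefore suffices to prove $c_k = \prod_{i=1}^k (k-i+1)^{2^i} =: P_k$. Since $P_0 = 1$ and a short reindexing ($j = i-1$, using $k-j = (k-1)-j+1$) gives $P_k = k^2 P_{k-1}^2 = (k P_{k-1})^2$, by induction it is enough to check $c_0 = 1$ ($M_0$ is trivial) and to prove the recursion $c_k = (k\, c_{k-1})^2$ for $k \geq 1$; this simultaneously establishes finiteness.

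To obtain the recursion I would reconstruct the Green--Rees normal form. For a nonempty word $w$ with $\supp(w) = S$, let $L(w)$ be the longest prefix of $w$ with $\supp(L(w)) \subsetneq S$. By maximality, extending $L(w)$ by its next letter gives content $S$, which forces $\supp(L(w)) = S \setminus \{a\}$ for a unique $a = a(w) \in S$ and that next letter to be $a$; thus $w = L(w)\, a\, m$ with $\supp(L(w)a) = S$. Dually define $R(w)$ and $b(w) \in S$, with $w = m'\, b\, R(w)$ and $\supp(b\, R(w)) = S$. The first step is to show that $a(w)$, $b(w)$ and the classes $[L(w)], [R(w)] \in M_n$ depend only on $[w]$; this I would verify by checking invariance under a single application of a generating relation $uzzv = uzv$, with a short case analysis on the position of the boundary of $L(w)$ relative to the block $zz$ — in every case the two left parts differ by one such relation, and symmetrically for $R(w)$.

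The substantive step is completeness: $w \sim L(w)\, a(w)\, b(w)\, R(w)$. I would derive this from the classical absorption law for bands, $\supp(t) \subseteq \supp(u) \Rightarrow u\, t\, u \sim u$ $(\star)$: taking $u = L(w)a(w)$, $t = m$ gives $w \cdot L(w)a(w) \sim L(w)a(w)$; dually $b(w)R(w) \cdot w \sim b(w)R(w)$; and taking $u = w$, $t = L(w)a(w)b(w)R(w)$ gives $w \cdot L(w)a(w)b(w)R(w) \cdot w \sim w$. Chaining these three congruences yields $w \sim L(w)a(w)b(w)R(w)$. Combining invariance and completeness, the map $[w] \mapsto ([L(w)], a(w), b(w), [R(w)])$ is a bijection from $M_n^S$ onto the set of quadruples $(\lambda, a, b, \rho)$ with $a, b \in S$, $\lambda$ a full-content element on the alphabet $S \setminus \{a\}$ and $\rho$ a full-content element on $S \setminus \{b\}$: injectivity is completeness together with the observation that $a$ is recovered as the unique element of $S \setminus \supp(\lambda)$, and surjectivity sends $(\lambda, a, b, \rho)$ to $[u\, a\, b\, v]$ for any representatives $u$ of $\lambda$ and $v$ of $\rho$ (one checks $L(uabv) = u$, $R(uabv) = v$ directly). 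Since the number of full-content elements on any $(k-1)$-element alphabet is $c_{k-1}$, this quadruple set has $(k\, c_{k-1})^2$ elements, giving $c_k = (k\, c_{k-1})^2$ as required.

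The main obstacle is the completeness claim and the absorption law $(\star)$ underpinning it — equivalently, the fact that content together with the left and right parts is a \emph{complete} invariant for $\sim$; by contrast the content stratification and the solution of the recursion are routine bookkeeping. I would prove $(\star)$ separately, by induction on $|t|$ after reducing to the case where $t$ is a single letter of $\supp(u)$, or else cite Green and Rees \cite{GreenRees} for the full normal-form analysis, of which the argument above is a streamlined account.
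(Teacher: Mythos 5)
Your proposal is correct in outline and follows essentially the same route as the paper: stratification by content, the Green--Rees quadruple $(L,a,b,R)$ as a complete invariant of a ${\sim}$-class (the paper's Lemmas \ref{lem:GRFsim} and \ref{lem:GRFequiv}), and the recursion $c_k=(k\,c_{k-1})^2$ with $c_0=1$, which is exactly the count \eqref{eq:ck} that the paper phrases in terms of trees (the tree formalism there is only needed later, for mirigs, not for the count itself). The one real difference is the absorption statement you rest completeness on: you use $(\star)$, namely $\supp(t)\subseteq\supp(u)\Rightarrow utu\sim u$, whereas the paper proves the one-sided Lemma \ref{lem:suffix}, that $\al(x)\supseteq\al(y)$ implies $x\sim xyu$ for \emph{some} padding word $u$, and chains such instances in Lemma \ref{lem:GRFsim}. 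The two are equivalent in strength --- $(\star)$ follows from Lemma \ref{lem:suffix} in two lines, since $u\sim utuw$ for some $w$ gives $utu\sim ut\cdot utuw = (ut)(ut)uw\sim utuw\sim u$ --- but your sketched proof of $(\star)$ ``by induction on $|t|$ after reducing to a single letter'' is the delicate spot: neither the reduction of a general $t$ to single letters nor the base case $uau\sim u$ goes through by naive concatenation, and the existential padding word in the paper's formulation is precisely what makes the induction on the length of the absorbed word close. So either prove $(\star)$ via that formulation (or the equivalent rectangular-band argument), or keep your fallback citation to Green--Rees; with that point secured, your invariance check under a single application of $uzv\sim uzzv$, the bijection with quadruples, and the final bookkeeping all match the paper's argument and are sound.
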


This result might be surprising at first sight, since based on Example \ref{exa:M2} one might expect the set of elements of a free idempotent monoid to coincide with the set of square-free words in the generators, and \textit{there are infinitely many square-free words} on three or more generators! That last fact was originally discovered by Thue, \cite{Thue} (that's 1906, not 2006!); see \cite{ThueTranslate} for a more accessible, translated version of their papers. However, it turns out that the reduction of a word to a square-free word is not confluent when there are more than two generators.
\begin{exa}
Dylan McDermot gave the following small example illustrating non-confluence of reductions in the Zulip discussion:
\begin{align*}
a\textbf{babcbabc} & \to \textbf{abab}c \to abc \\
\textbf{abab}cbabc & \to abcbabc
\end{align*}
These reductions witness the fact that $abc \sim abcbabc$, so these words represent the same element of the free idempotent monoid on generators $a,b,c$, even though both are square-free. An older example can be found in \cite[\S2.4]{squareFree}. 
\end{exa}
This failure of confluence is significant enough to make ${\sim}$-equivalence classes so large that there are only finitely many of them. 

\subsection{Green-Rees Forms (GRFs)}

Theorem \ref{thm:idmonfin} is a result of Green and Rees, \cite{GreenRees}; this result and its consequences have been rediscovered numerous times since. In the present subsection we reproduce a proof of their result (structurally similar to the original proof) based on the version which can be found in \cite[\S2.4]{squareFree}. We shall require the structures developed in this proof to deepen our understanding of free idempotent monoids.

For a word $w \in F_n$, we define the \textbf{alphabet} of $w$ to be the set $\al(w) \subseteq [n]$ of generators appearing in $w$ and denote the \textbf{length} of $w$ by $|w|$.

\begin{lem}
\label{lem:decompose}
The alphabet descends to a well-defined function $\alpha: M_n \to \Pcal([n])$. In particular, $M_n$ decomposes as a disjoint union of subsemigroups,
\[M_n = \coprod_{A \subseteq [n]} \{w \in M_n \mid \alpha(w) = A\}.\]
\end{lem}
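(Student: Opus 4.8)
The plan is to show two things: first, that the alphabet map is well-defined on $\sim$-equivalence classes, and second, that the resulting fibre decomposition consists of subsemigroups. For the first point, I would argue that $\al\colon F_n \to \Pcal([n])$ respects the generating relations of the congruence $\sim$. Since the congruence is generated by $w \sim ww$, and since $\al(ww) = \al(w) \cup \al(w) = \al(w)$, each generating relation identifies words with the same alphabet. More precisely, $\al$ is a monoid homomorphism from $F_n$ to the monoid $(\Pcal([n]), \cup)$, whose image identifies the two sides of every generating relation; hence $\al$ factors through the congruence $\sim$, yielding a well-defined map $\alpha\colon M_n \to \Pcal([n])$ with $\alpha([w]) = \al(w)$.

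For the decomposition, I would first observe that $M_n$ is visibly the disjoint union of the fibres $\alpha^{-1}(A)$ over all $A \subseteq [n]$, simply because $\alpha$ is a function on $M_n$. The content is that each fibre $\{w \in M_n \mid \alpha(w) = A\}$ is closed under multiplication: if $\alpha(u) = A = \alpha(v)$, then $\alpha(uv) = \alpha(u) \cup \alpha(v) = A$, using that $\alpha$ is a homomorphism to $(\Pcal([n]), \cup)$ — which follows immediately from the corresponding fact for $\al$ on $F_n$. So each fibre is a subsemigroup. (It is worth noting it need not be a submonoid, since only the fibre over $\emptyset$ contains the unit.)

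I do not anticipate a genuine obstacle here; the statement is essentially a bookkeeping observation. The only subtlety worth spelling out carefully is why $\al$ descends to the quotient: one should phrase it in terms of the universal property of the congruence $\sim$ (it is the \emph{smallest} congruence containing the pairs $(w, ww)$), so that any congruence-respecting map — in particular the kernel congruence of $\al$ as a monoid homomorphism — contains $\sim$, giving the factorization. Everything else is a one-line computation with the union operation.
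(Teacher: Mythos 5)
Your proposal is correct and follows essentially the same route as the paper: check that $\al$ respects the generating relations $w \sim ww$ (so it descends to $M_n$) and use $\al(uv) = \al(u) \cup \al(v)$ to see each fibre is a subsemigroup. The extra packaging via the universal property of the congruence and the remark about fibres not being submonoids are fine elaborations but add nothing beyond the paper's argument.
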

\begin{proof}
By inspection of the congruence ${\sim}$, if $w \sim w'$ then $\alpha(w) = \alpha(w')$. The decomposition follows by observing that $\alpha(ww') = \alpha(w) \cup \alpha(w')$, whence these are indeed disjoint subsemigroups.
\end{proof}

\begin{remark}
\label{rem:anticomm}
This is an instance of Clifford's decomposition of a semigroup over an idempotent semigroup \cite{Clifford}; in this case, the indexing idempotent semigroup is the powerset $\Pcal([n])$ with the union operation, and the fiber semigroups have the property of being \textbf{anticommutative}: if $uv = vu$ then $u=v$. This will be clearer when we present the elements of $M_n$ in a different form in Section \ref{ssec:trees} below; note that anticommutativity does \textit{not} hold for $M_n$ as a whole when $n>1$, since $a(aba) = aba = (aba)a$. By a result of McLean \cite[Lemma 1]{McLean}, this anticommutativity property holds for an idempotent semigroup if and only if $xyz = xz$ for all elements $x,y,z$.
\end{remark}

To better understand $M_n$, we must do further work at the level of $F_n$.

\begin{lem}
\label{lem:suffix}
Let $x,y \in F_n$ and suppose $\al(x) \supseteq \al(y)$. Then there exists $u \in F_n$ such that $x \sim xyu$.
\end{lem}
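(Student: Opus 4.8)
The plan is to prove this by induction on the length $|y|$, absorbing the letters of $y$ one at a time from the \emph{right}. The base case is $y = \epsilon$, where $u = \epsilon$ works since $x \sim x$. For the inductive step, write $y = y'a$ with $a \in [n]$ the final letter of $y$, so that $|y'| < |y|$ and $\alpha(y') \subseteq \alpha(y) \subseteq \alpha(x)$. Applying the induction hypothesis to $x$ and $y'$ gives a word $v$ with $x \sim xy'v$, and it remains to reinsert the letter $a$ directly after the prefix $xy'$.

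Here is the key step. Since $a \in \alpha(x) \subseteq \alpha(xy')$, we may factor $xy' = pas$ for suitable words $p$ and $s$ (take $p$ to be the prefix of $xy'$ before its first occurrence of $a$). As ${\sim}$ is a congruence and $as \sim (as)^2$ is an instance of the defining relation $w \sim ww$, we obtain $xy' = pas \sim pasas = (xy')\,as$. Multiplying this on the right by $v$ and combining with $x \sim xy'v$ yields
\[ x \sim xy'v \sim xy'\,a\,s\,v = xy\,(sv), \]
so that $u := sv$ has the required property.

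The point requiring care — and the reason a more naive argument fails — is the direction in which the letters of $y$ are absorbed. If one instead peels the \emph{first} letter $a$ of $y$ and absorbs it into $x$ first (the same duplication trick gives $x \sim xas_0$ for a suitable factor $s_0$ of $x$), one is left with the junk word $s_0$ wedged between $a$ and the point where the remainder of $y$ must be inserted, and there is no way to clear it. Absorbing from the right avoids this: the junk $s$ produced by the trick, together with the leftover $v$ from the induction hypothesis, ends up entirely to the right of $xy$ — exactly where the word $u$ is allowed to sit. The remaining details are routine manipulations of the congruence ${\sim}$. (For later use one can record the equivalent symmetric formulation $xyx \sim x$, which follows by taking $u = x$ and conversely implies the lemma since $(xy)^2 \sim xy$ forces $xyx = (xy)(xy)u \sim (xy)u \sim x$ whenever $x \sim xyu$; but I would give the elementary inductive proof above rather than appeal to the structure theory of Section~\ref{ssec:trees}.)
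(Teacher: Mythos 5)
Your proof is correct and is essentially the paper's own argument: the same induction on $|y|$ peeling off the final letter $a$ of $y$, with the same idempotency trick of duplicating the block running from an occurrence of $a$ (which lies inside $x$ since $a \in \al(x)$) to the end of $xy'$, so that your $u = sv$ is the paper's $u = x'y'u'$ up to notation. (Only your closing aside has a cosmetic slip: $xyx = (xy)(xy)u$ should be $xyx \sim xy(xyu) = (xy)(xy)u$, since $x \sim xyu$ is an equivalence rather than an equality; the main induction is unaffected.)
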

\begin{proof}
By induction on $|y|$. If $y$ is the empty word, we can let $u$ be empty also. Now suppose $y = y'a$ with $a$ a generator and that the theorem is true for words up to length $|y'|$. By assumption, $x \sim xy'u'$ for some $u'$. Moreover, $a \in \al(x)$ so we may write $x = x''ax'$. Let $u = x'y'u'$. Then we have:
\begin{equation*}
	xyu = x''ax'y'ax'y'u' \sim x''ax'y'u' = xy'u' \sim x. \qedhere
\end{equation*}
\end{proof}

The \textbf{Green-Rees form} (GRF) of a non-empty word $w$ is the tuple $(p,a,b,q)$ where:
\begin{itemize}
	\item $p$ is the maximal prefix of $w$ containing all but one of the generators in $\al(w)$,
	\item $a$ is the sole member of $\al(w) - \al(p)$, so $w = paw'$ for some word $w'$.
	\item Dually, $q$ is the maximal suffix of $\al(w)$ containing all but one of the members of $\al(w)$,
	\item $b$ is the sole member $\al(w)-\al(q)$.
\end{itemize}

\begin{lem}
\label{lem:GRFsim}
Suppose $w$ has GRF $(p,a,b,q)$. Then $w \sim pabq$.
\end{lem}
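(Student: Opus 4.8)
The plan is to decompose the word $w$ using its GRF data and then apply Lemma \ref{lem:suffix} twice, once on each side, to absorb the "middle" of the word. Write $w = paw'$ where $p$ is the maximal prefix omitting exactly one generator $a \in \al(w)$. Dually, write $w = w''bq$, where $q$ is the maximal suffix omitting exactly one generator $b$. Note that by maximality of $p$, the generator $a$ does occur in $w'$ only... actually the key point is that $\al(paw') = \al(w)$, and since $p$ omits only $a$, we have $\al(pa) = \al(w)$; similarly $\al(bq) = \al(w)$.

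\textbf{Key steps.} First I would handle the prefix side: since $\al(pa) = \al(w) \supseteq \al(w')$, Lemma \ref{lem:suffix} (applied with $x = pa$, $y = w'$) gives a word $u$ with $pa \sim paw'u = wu$. Hence $w \sim wu \cdot (\text{something})$? Wait — more directly, I want to show $w \sim pa \cdot q'$ for an appropriate suffix. Let me instead argue symmetrically. From $w = paw'$ and $\al(pa)=\al(w)\supseteq\al(w')$, Lemma \ref{lem:suffix} yields $u$ with $pa \sim paw'u$, i.e. $pa \sim wu$. Reading the suffix decomposition mirror-image (or invoking the evident left-right dual of Lemma \ref{lem:suffix}), from $w = w''bq$ and $\al(bq) = \al(w) \supseteq \al(w'')$ we get $v$ with $bq \sim vw''bq = vw$. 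Now combine: $w \sim w \cdot w$ (idempotency applied to the ${\sim}$-class, valid since ${\sim}$ is the congruence generated by $x \sim xx$), and then $w \sim ww \sim w \cdot (vw) $ ... the bookkeeping needs care. The cleanest route: show $w \sim pabq$ by the chain
\begin{equation*}
pabq \sim pa w' \cdot bq \sim w \cdot bq,
\end{equation*}
where the first ${\sim}$ uses that $pa$ has full alphabet so $pa \sim paw'$... no: I need $pabq \sim paw'bq$, which follows from Lemma \ref{lem:suffix} in the form $pa \sim paw'u$ for suitable $u$ whose alphabet is contained in $\al(w)$, combined with absorbing $u$ into the rest. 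Concretely: $pabq \sim paw'u bq$? That requires inserting $w'u$, i.e. the reverse direction of the lemma, which is fine since ${\sim}$ is symmetric. Then one absorbs the inserted $u$: since $\al(bq)=\al(w)\supseteq\al(u)$, apply the dual of Lemma \ref{lem:suffix} to rewrite $ubq \sim bq$ (after prepending), collapsing back to $paw'bq = w''' $, and a final application on the $q$ side, or symmetric reasoning with $b$, closes the gap to $w = paw' = \ldots = w''bq$.

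\textbf{Main obstacle.} The genuine difficulty is the bookkeeping in chaining the two one-sided absorptions so that the leftover words $u$, $v$ introduced by Lemma \ref{lem:suffix} get reabsorbed rather than accumulating. The trick — standard in this area — is to first pass to $w \sim ww$ (or $w \sim www$), giving two or three copies of $w$ to play with: absorb $w'$ from the right of the first copy using the second copy as the "$u$", absorb $w''$ from the left of the last copy using the previous copy, and finally recontract. I expect the proof to run: $w \sim www$; the middle $w$ has full alphabet, so by Lemma \ref{lem:suffix} applied to the boundary between the first $w$ and the middle $w$, the tail $w'$ of the first $w = paw'$ can be deleted, yielding $pa \cdot w \cdot w$; dually delete the head $w''$ of the last $w = w''bq$ against the middle $w$, yielding $pa \cdot w \cdot bq$; then $w \sim pabq$ follows from contracting the middle $w$ against $pa$ on its left (full alphabet $\al(pa)=\al(w)$) and against $bq$ on its right. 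Verifying each deletion step is a routine application of Lemma \ref{lem:suffix} (and its mirror image), so no new ideas beyond careful alphabet tracking are needed.
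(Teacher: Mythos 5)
There is a genuine gap. Your two one-sided applications of Lemma \ref{lem:suffix} (giving $pa \sim paw'u = wu$ and, dually, $bq \sim vw''bq = vw$) are the right ingredients, but neither of your two attempts to chain them closes. First, the claimed rewrite ``$ubq \sim bq$'' is not what the lemma (or its dual) gives and is false in general: the dual only supplies \emph{some} word $v'$ with $bq \sim v'ubq$, i.e.\ you may insert or delete the block $v'u$ with an unknown residue on the far side, never the bare word $u$; concretely $a\cdot ba \not\sim ba$ in $M_2$ even though $\al(a)\subseteq\al(ba)$. Second, the final step of your $w\sim www$ plan --- ``contracting the middle $w$'' in $pa\,w\,bq$ against $pa$ and $bq$ --- is circular: since $pa\,w\,bq \sim pa\,w \sim w$ by idempotency of $pa$ and $bq$, the claim $pa\,w\,bq \sim pa\,bq$ \emph{is} the statement $w \sim pabq$ being proved, and it does not follow from a direct application of Lemma \ref{lem:suffix}, precisely because of the residue problem you yourself identified. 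Likewise, with your choice $bq \sim vw$ the natural chain ends at needing $wuvw \sim w$, which is again not a consequence of anything established.

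The paper closes the gap differently: it applies the dual of Lemma \ref{lem:suffix} not to $w$ but to the target word $x = pabq$ (using $\al(pa)=\al(bq)$), obtaining $bq \sim vx$, so that the two absorptions can be chained through idempotency of both $w$ and $x$: $w = w''bq \sim w''vx \sim w''vxx \sim wx \sim w\,wu\,bq \sim wubq \sim pabq$. The essential trick, which your proposal is missing, is to choose the auxiliary factorizations so that each residue recombines into a full copy of $w$ or of $x$ (note $paw'u = wu$ and $vpabq = vx$), at which point idempotency of those whole words eliminates the residues instead of letting them accumulate.
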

\begin{proof}
Let $x = pabq$. By construction of the GRF, we have $w = paw'$ and $w = w''bq$ for words $w',w''$. Since $\al(w') \subseteq \al(w) = \al(pa)$, by Lemma \ref{lem:suffix} there exists $u$ with $wu = paw'u \sim pa$. Similarly, since $\al(pa) = \al(bq)$, by the dual of Lemma \ref{lem:suffix} there exists $v$ with $vx = vpabq \sim bq$. Thus:
\begin{align*}
	x = pabq &\sim wubq & \text{and} \\
	w = w'bq &\sim w'vx.
\end{align*}
Using these relations, we have:
\[w \sim w'vx \sim w'vxx \sim wx \sim wwubq \sim wubq \sim x. \qedhere\]
\end{proof}

Thus GRFs provide candidate `standard' representatives of elements of the free idempotent monoid. The crucial fact which will enable us to use these to count the elements of $M_n$ is the following.

\begin{lem}
\label{lem:GRFequiv}
Let $x,y$ have respective GRFs $(p,a,b,q)$ and $(p',a',b',q')$. Then $w \sim w'$ if and only if $a=a'$, $b=b'$, $p \sim p'$ and $q \sim q'$.
\end{lem}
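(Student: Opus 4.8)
The plan is to prove the two directions of the equivalence separately, with the ``only if'' direction being the substantive one. First I would dispatch the ``if'' direction: given that $a = a'$, $b = b'$, $p \sim p'$ and $q \sim q'$, Lemma \ref{lem:GRFsim} tells us $x \sim pabq$ and $y \sim p'a'b'q' = p'abq'$, and since $\sim$ is a congruence we may replace $p$ by $p'$ and $q$ by $q'$ to conclude $x \sim y$. So the work is entirely in the forward direction: assuming $x \sim y$, recover the four pieces of data.

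For the forward direction, I would first observe that $\al(x) = \al(y)$ by Lemma \ref{lem:decompose} (or rather the observation in its proof that $\sim$ preserves alphabets), so both GRFs live over the same alphabet $A$. The key subclaim is that $a$, the first-occurring ``last new letter'', and $p$, the prefix before it, are determined by the $\sim$-class. To see this, I would argue that $a$ is characterized intrinsically: writing $|A| = m$, the letter $a$ is the unique generator $c \in A$ such that some word in the class has a prefix using exactly the other $m-1$ letters of $A$ and not $c$ — equivalently, $c$ is the letter whose ``first appearance'' can be postponed longest. More usefully, I would show $p$ (up to $\sim$) is the \emph{longest} element, in the $\sim$-ordering, among prefixes of words in the class whose alphabet omits exactly one letter of $A$; because the GRF is built from the maximal such prefix, this maximality is stable under $\sim$. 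The cleanest route is probably to prove: if $w \sim w'$ and $w = paw''$ with $\al(p) = A - \{a\}$, $\al(pa) = A$, $p$ maximal with this property in $w$, then $w'$ also factors as $w' = p'a w'''$ with $\al(p') = A - \{a\}$ and $p \sim p'$. This would be done by an induction on the length of a derivation $w \sim w'$, reducing to a single rewrite $u v v w'''' \leftrightarrow u v w''''$ and checking case-by-case where the doubled factor $v$ sits relative to the split at $a$: if $v$ is entirely inside $p$ one uses the inductive characterization of $p$; if $v$ straddles or lies after the $a$, one checks the prefix before the first new occurrence of $a$ is unaffected up to $\sim$.

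The dual argument handles $b$ and $q$ by the left-right reversal symmetry (the anti-automorphism of $F_n$ reversing words descends to $M_n$ and swaps the roles of $(p,a)$ and $(q,b)$), so once the $(p,a)$ half is done the $(q,b)$ half is free.

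\textbf{Main obstacle.} The hard part will be the stability argument for $p$ under a single application of the rewrite rule $vv \leftrightarrow v$ — specifically the bookkeeping when the repeated block $v$ overlaps the boundary between $p$ and the rest of $w$, or contains the critical first occurrence of the letter $a$. One has to argue that deleting (or inserting) such a block cannot change which letter is ``last to appear'' nor change the $\sim$-class of the prefix before it; this is where Lemma \ref{lem:suffix} and the congruence properties get used delicately, and where an overly naive induction on word length (rather than on derivation length, tracking alphabets carefully) tends to break down. I would expect this to be the only place requiring real care; everything else is formal manipulation with Lemmas \ref{lem:decompose}, \ref{lem:suffix} and \ref{lem:GRFsim}.
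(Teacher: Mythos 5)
Your plan is essentially the paper's proof: the ``if'' direction via Lemma \ref{lem:GRFsim} and the congruence property, and the ``only if'' direction by reducing to a single generating rewrite $x = rst \leftrightarrow y = rs^2t$ and showing the pair $(p,a)$ is preserved (with $p$ preserved up to $\sim$), the $(q,b)$ half following dually. The case analysis you flag as the main obstacle is in fact painless in the paper: split on $|rs| > |p|$ versus $|rs| \leq |p|$ — in the first case $rs = pau$, so $y = paust$ and the GRF prefix and letter of $y$ are literally $p$ and $a$ (this absorbs your ``straddling'' and ``contains the first $a$'' worries in one stroke), while in the second $p = rsv$ and the new prefix is $rs^2v \sim rsv = p$.
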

\begin{proof}
The `if' direction follows from Lemma \ref{lem:GRFsim}. Conversely, suppose $x \sim y$; given how the congruence ${\sim}$ is generated, it suffices to check the case that $x = rst$ and $y = rs^2t$ for words $r,s,t$. We have $x = pax'$ for some word $x'$. Consider two cases:
\begin{itemize}
	\item If $|rs|>|p|$, then we have $rs = pau$ for some word $u$. Thus $y = paust$ and $\al(p) = \al(y) - \{a'\}$, so by definition of the GRF, $p'=p$ and $a' = a$.
	\item If $|rs| \leq |p|$, there is a word $v$ such that $p = rsv$ and $t = vat'$. Thus $y = rs^2vat'$ and $\al(rs^2v) = \al(rsv) = \al(y) -\{a\}$, so $p' = rs^2v \sim rsv = p$ and $a' = a$, as required.
\end{itemize}
Applying the argument dually for $b,b',q,q'$ completes the result.
\end{proof}

\begin{remark}
\label{rem:Gerhard}
The work of Green and Rees was used by Gerhard (and others) to complete a classification of the equational subvarieties of idempotent semigroups, \cite{Gerhard}. Surprisingly, each subvariety is determined by a single equation (in addition to the idempotency equation $x^2 = x$), in contrast with more general varieties of semigroups. See the survey of varieties of semigroups by Evans \cite{Evans} for more information on this topic.
\end{remark}

\subsection{From Green and Rees to trees}
\label{ssec:trees}

We already have what we need to complete the proof of Theorem \ref{thm:idmonfin} by inductively counting the number of GRFs, as Green and Rees did. However, for understanding mirigs, it will serve us to develop GRFs into an explicit combinatorial representation of $M_n$ as a monoid of \textit{labelled, rooted binary trees}.

Formally, we define the set of trees $T_n$ alongside the function $\al:T_n \to \Pcal([n])$ returning a tree's \textbf{alphabet} (where $\Pcal([n])$ is the powerset of $[n]$) inductively as follows: 
\[\frac{}{() \in T_n ; \al(()) = \emptyset} \] 
\[\frac{t_0,t_1 \in T_n, \, a_0 \in [n] - \al(t_0), \, a_1 \in [n] - \al(t_1), \, \al(t_0) \cup \{a_0\} = \{a_1\} \cup \al(t_1)}{(t_0,a_0,a_1,t_1) \in T_n ; \al((t_0,a_0,a_1,t_1)) = \al(t_0) \cup \{a_0\} } \] 

We define the \textbf{height} of a tree $t$ to be $h(t) := |\al(t)|$. We write $(a)$ as shorthand for $((),a,a,())$ (noting that the two generators in a tree of height $1$ are necessarily equal). 

We can define a map $\tr: F_n \to T_n$ by induction on $|\al(w)|$: it sends the empty word to the trivial tree $()$ and sends a non-empty word $w$ with GRF $(p,a,b,q)$ to $\tr(w) := (\tr(p),a,b,\tr(q))$.

Given a (fully expanded) tree $t$, we can read off the generators in order of appearance to produce a word $W(t) \in F_n$ such that, by inductive application of Lemma \ref{lem:GRFequiv}, $w \sim W(t)$ if and only if $\tr(w) = t$. Beware that while we shall shortly endow $T_n$ with a monoid structure, the section $W: T_n \to F_n$ of $\tr$ thus constructed is not a monoid homomorphism.

\begin{exa}
It can be useful to display these formal syntactic trees as binary trees with root $\ast$, where the successive generators in the tree are nodes arranged on the appropriate sides. We may omit leaves coming from the trivial tree $()$. For example, $\tr(bcac) = (((b),c,b,(c)),a,b,((c),a,a,(c)))$ can be displayed as follows:
\[\begin{tikzcd}[column sep = tiny, row sep = small, ampersand replacement=\&]
	b \& b \&\&\& c \& c \&\&\&\& c \& c \&\&\& c \& c \\
	\&\& c \& b \&\&\&\&\&\&\&\& a \& a \\
	\&\&\&\&\&\& a \&\& b \\
	\&\&\&\&\&\&\& \ast
	\arrow[from=4-8, to=3-7]
	\arrow[from=4-8, to=3-9]
	\arrow[from=3-7, to=2-3]
	\arrow[from=3-7, to=2-4]
	\arrow[from=3-9, to=2-12]
	\arrow[from=3-9, to=2-13]
	\arrow[from=2-3, to=1-1]
	\arrow[from=2-3, to=1-2]
	\arrow[from=2-4, to=1-5]
	\arrow[from=2-4, to=1-6]
	\arrow[from=2-12, to=1-10]
	\arrow[from=2-12, to=1-11]
	\arrow[from=2-13, to=1-15]
	\arrow[from=2-13, to=1-14]
\end{tikzcd}\]
In either format we can read off that $W(\tr(bcac)) = bbcbccabccaacc$ (note the repetition coming from expansion of the abbreviated height $1$ trees), and we can independently verify that $bbcbccabccaacc \sim bcac$ by straightforward reductions.
\end{exa}

The $\tr$ map sends a word $w$ containing $n$ generators to a tree with $2^{n+1}-2$ labeled nodes in which every maximal path contains all $n$ generators. The above example illustrates that this presentation is very inefficient for short words, but it is uniquely defined thanks to the uniqueness of GRFs up to $\sim$. Moreover, the property of $W$ described above demonstrates that we if we endow $T_n$ with the monoid structure induced by $\tr$, we obtain a monoid isomorphic to $M_n$.

Before examining the monoid structure on $T_n$ in more detail, let us at last compute $|M_n| = |T_n|$ by induction on heights.

\begin{proof}[Proof of {Theorem \ref{thm:idmonfin}}]
Let $c_k$ be the number of trees of height $k$ on a given set $[k]$. Clearly $c_0 = 1$. For each $k \geq 1$, a tree of height $k$ labelled by generators in $[k]$ is constructed by choosing a pair of generators $a,b \in [k]$ and constructing a pair of trees of height $k-1$ on generators $[k]-\{a\}$ and $[k]-\{b\}$, respectively. Thus $c_k = k^2 c_{k-1}^2$, which inductively yields,
\begin{equation}
\label{eq:ck}
c_k = \prod_{i=1}^k (k-i+1)^{2^i}.
\end{equation}
To obtain the identity \eqref{eq:sum} of Theorem \ref{thm:idmonfin}, we multiply the counts $c_k$ by the number of subsets of $[n]$ of size $k$, then sum over the possible values of $k$. This concludes the proof.
\end{proof}

\subsection{Products of trees}

In this section, for trees $s,t$, we write $s \ast t$ for $\tr(W(s)W(t))$; this is the monoid operation on $T_n$. \textit{Beware that we shall drop the ${\ast}$ symbol in subsequent sections for legibility.} To facilitate the examination of this operation, we extend the notation for a tree $t = (t_0,a^t_0,a^t_1,t_1)$ to the component trees via indices, so $t_0 = (t_{00},a^t_{00},a^t_{01},t_{01})$ and so on (observe that $a^{t_0}_1 = a^{t}_{01}$). 

\begin{lem}
\label{lem:sastt}
Let $s,t \in T_n$. Then $s \ast t$ can be computed inductively as follows:
\begin{itemize}
	\item If $s=()$, then $s \ast t = t$,
	\item If $t=()$, then $s \ast t = s$ (consistent with the above when $s = t = ()$),
	\item Otherwise, 
	we construct $(s \ast t)_0$ and $a^{(s \ast t)}_0$ inductively:
	\begin{itemize}
		\item If $a^t_0 \notin \al(s)$, $(s \ast t)_0 = s \ast t_0$ and $a^{(s \ast t)}_0 = a^t_0$.
		\item Else, $(s \ast t)_0 = (s \ast t_0)_0$ and $a^{(s \ast t)}_0 = a^{(s \ast t_0)}_0$.
	\end{itemize}
	\item $(s \ast t)_1$ and $a^{(s \ast t)}_1$ are constructed dually.
\end{itemize}
\end{lem}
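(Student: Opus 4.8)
The plan is to prove this by induction on $h(s) + h(t)$, unwinding the definition $s \ast t = \tr(W(s)W(t))$ in terms of Green--Rees forms. The base cases where $s = ()$ or $t = ()$ are immediate: $W(())$ is the empty word, so $W(s)W(t)$ reduces to $W(t)$ or $W(s)$ respectively, and $\tr$ applied to these returns $t$ or $s$ since $\tr(W(u)) = u$ for any tree $u$ (by the characterizing property of $W$ recorded just before the computation of $|M_n|$). So the content is entirely in the inductive step, and by the left/right symmetry of the statement (the ``dual'' clause) it suffices to establish the formula for $(s \ast t)_0$ and $a^{(s\ast t)}_0$.

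The key observation is this: write $w := W(s)W(t)$, and let $(P, A, B, Q)$ be the GRF of $w$, so that by definition $s \ast t = \tr(w) = (\tr(P), A, B, \tr(Q))$, i.e. $(s\ast t)_0 = \tr(P)$ and $a^{(s\ast t)}_0 = A$. I must therefore identify the maximal prefix $P$ of $w$ omitting exactly one generator of $\al(w) = \al(s) \cup \al(t)$, together with that missing generator $A$. Now $w = W(s)W(t)$ and $W(t) = W(t_0)\,a^t_0\,a^t_1\,W(t_1)$ (reading off the root-level structure of $t$), so the prefix $W(s)W(t_0)$ of $w$ has alphabet $\al(s) \cup \al(t_0) = \al(s) \cup (\al(t) \setminus \{a^t_0\})$ when $a^t_0 \notin \al(t_0)$ — and I should note $\al(t_0) = \al(t)\setminus\{a^t_0\}$ always holds by the inductive tree condition, since $\al(t_0)\cup\{a^t_0\} = \al(t)$ and $a^t_0\notin\al(t_0)$.

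The case split then runs as follows. If $a^t_0 \notin \al(s)$: then $a^t_0$ is a generator of $w$ not appearing anywhere in the prefix $W(s)W(t_0)$, and it does appear immediately after (it's the letter $a^t_0$), so $P$ extends at least this far; moreover any longer prefix would have to include this occurrence of $a^t_0$ and hence would have full alphabet, contradicting maximality — wait, one must be slightly careful, since there could be a different generator missing. The clean way: $\al(w)\setminus\al(W(s)W(t_0)) = \{a^t_0\}$, so $W(s)W(t_0)$ is a prefix omitting exactly the generator $a^t_0$; I then argue it is the \emph{maximal} such prefix (the next letter $a^t_0$ would complete the alphabet), so $A = a^t_0$ and $P = W(s)W(t_0) = W(s \ast t_0)$ up to $\sim$, whence $\tr(P) = s \ast t_0$ by the defining property of $\tr$ and $W$. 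This gives $(s\ast t)_0 = s \ast t_0$ and $a^{(s\ast t)}_0 = a^t_0$, as claimed. If instead $a^t_0 \in \al(s)$: then the prefix $W(s)$ already contains $a^t_0$, so the ``missing generator'' of $P$ is not witnessed inside $W(t_0)$ in the same way; here I claim $w = W(s)W(t)\sim W(s\ast t_0)\,a^t_0\cdots$ has the same GRF-prefix data as $W(s\ast t_0)$ itself, so that $P \sim P'$ and $A = A'$ where $(P',A',B',Q') = \mathrm{GRF}(W(s\ast t_0))$; thus $\tr(P) = \tr(P') = (s\ast t_0)_0$ and $A = A' = a^{(s\ast t_0)}_0$. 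Making this last claim rigorous is where Lemma~\ref{lem:GRFequiv} does the heavy lifting: the point is that $W(s)W(t) \sim W(s)W(t_0)\,a^t_0\,a^t_1\,W(t_1)$ and, since $\al(t_0)\cup\al(s) \ni a^t_0$ and this equals $\al(w)$ only after adjoining the tail, the prefix-determining part of the GRF of $w$ agrees with that of $W(s)W(t_0) = W(s\ast t_0)$.

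The main obstacle I anticipate is precisely this second case: carefully verifying that passing from $w$ to $W(s \ast t_0)$ does not change which generator is ``last to appear'' nor the $\sim$-class of the maximal prefix omitting it. This amounts to a bookkeeping argument with alphabets of prefixes, combined with an appeal to Lemma~\ref{lem:GRFsim}/\ref{lem:GRFequiv} to replace words by $\sim$-equivalent ones (in particular $W(s)W(t_0) \sim W(s\ast t_0)$) without disturbing the GRF data that determines $(s\ast t)_0$. The inductive hypothesis applies to $s \ast t_0$ since $h(s) + h(t_0) < h(s) + h(t)$, which is what lets the recursion close; and the dual argument for $(s\ast t)_1$, $a^{(s\ast t)}_1$ is verbatim the same with left and right interchanged, using that $\tr$ is defined via GRFs that are themselves left-right symmetric.
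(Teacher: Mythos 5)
Your argument is correct and follows essentially the same route as the paper: both proofs compute the GRF of $W(s)W(t)$, split on whether $a^t_0 \in \al(s)$, and recurse into $t_0$ (the paper's proof is just a terser sketch of this, invoking Lemma \ref{lem:GRFequiv} implicitly). One phrase in your second case is garbled --- in that case $\al(s) \cup \al(t_0)$ already equals $\al(w)$ \emph{without} the tail, which is precisely why the GRF-prefix data of $w$ coincides with that of $W(s)W(t_0) \sim W(s \ast t_0)$ --- but the surrounding reasoning makes the correct fact clear, so this is only a wording slip, not a gap.
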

\begin{proof}
The base cases are immediate from the fact that the trivial tree represents the empty word. For the remainder, we iteratively calculate the GRF of the product to find the leftmost appearance of the rightmost instance of a generator: if $a^t_0 \notin \al(w)$, then $a^t_0$ must be this generator; otherwise the generator appears further to the left and we can restrict attention to a branch of smaller height.
\end{proof}

\begin{cor}
\label{cor:product}
Let $s,t \in T_n$. Then:
\begin{itemize}
	\item If $\al(s) = \al(t)$, we have $s \ast t = (s_0,a^{s}_0,a^{t}_1,t_1)$.
	\item More generally, $s$ is a left factor of $t$ if and only if $\al(s) \subseteq \al(t)$ and $s_0 = t_{0^{k}}$, where $k = 1+h(t)-h(s) > 0$ and $0^k$ denotes a string of $k$ zeros in the subscript.
\end{itemize}
\end{cor}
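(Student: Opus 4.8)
The plan is to deduce both bullets from the inductive description of $s \ast t$ in Lemma \ref{lem:sastt}, using induction on $h(t)$. I will treat the first bullet as the base case of the structural analysis and then bootstrap to the general ``left factor'' characterization.

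\medskip

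\noindent\textit{First bullet.} Suppose $\al(s) = \al(t)$; in particular $h(s) = h(t)$, and we may assume both are non-trivial (the trivial case being vacuous). Write $t = (t_0,a^t_0,a^t_1,t_1)$. Since $a^t_0 \in \al(t) = \al(s)$, the ``Else'' clause of Lemma \ref{lem:sastt} applies on the left, giving $(s\ast t)_0 = (s \ast t_0)_0$ and $a^{(s\ast t)}_0 = a^{(s\ast t_0)}_0$. But $\al(t_0) = \al(t) - \{a^t_0\} = \al(s) - \{a^t_0\} \subsetneq \al(s)$, so $s$ is strictly taller than $t_0$; I will argue (this is really a sub-instance of the second bullet, so it is cleanest to prove the second bullet first and specialize) that in this situation $(s \ast t_0)_0 = s_0$ and $a^{(s\ast t_0)}_0 = a^s_0$. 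Dually, $(s \ast t)_1 = t_1$ and $a^{(s\ast t)}_1 = a^t_1$. Assembling the four pieces yields $s \ast t = (s_0, a^s_0, a^t_1, t_1)$, as claimed.

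\medskip

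\noindent\textit{Second bullet.} For the forward direction, if $s$ is a left factor of $t$, say $t = s \ast r$, then $\al(t) = \al(s) \cup \al(r) \supseteq \al(s)$, and I will show by induction on $h(r)$ (equivalently on $k = 1 + h(t) - h(s)$) that repeatedly descending into the left child of $t$ exactly $k$ times returns $s_0$; the inductive step is precisely the ``$a^t_0 \in \al(s)$, so $(s\ast t)_0 = (s\ast t_0)_0$'' clause applied in reverse, together with the observation that when $h(r) = 0$ the product is $s$ itself so $t_0 = s_0$, i.e.\ $k=1$. Conversely, if $\al(s) \subseteq \al(t)$ and $s_0 = t_{0^k}$ with $k = 1 + h(t) - h(s) > 0$, I reconstruct a witness: let $r$ be the tree obtained from $t$ by ``grafting'' $s$ in place of the depth-$k$ left spine — more precisely, one checks using Lemma \ref{lem:sastt} that $W(s)W(t') \sim W(t)$ where $t'$ is the appropriate residual tree, so that $s$ is a left factor. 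The key computation is that, under the hypothesis $s_0 = t_{0^k}$, the inductive recipe for $s \ast t'$ produces exactly $t$: the left spine of length $k-1$ consisting of generators $a^t_0, a^t_{00}, \dots$ all lie outside $\al(s)$ (this needs a short argument from $\al(s)\subseteq\al(t)$ and the height bookkeeping), triggering the ``$a^t_0 \notin \al(s)$'' clause $k-1$ times, after which the matching of left children uses $s_0 = t_{0^k}$ and the first bullet.

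\medskip

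\noindent The main obstacle I anticipate is the careful bookkeeping in the converse direction of the second bullet: one must verify that the generators along the length-$(k-1)$ left spine of $t$ genuinely avoid $\al(s)$, so that the inductive construction of the product peels them off one at a time rather than recursing into a deeper branch, and that the heights match up so that the recursion bottoms out exactly at $t_{0^k}$ with $\al = \al(s)$. This is where the precise value $k = 1 + h(t) - h(s)$ gets pinned down. Everything else is a mechanical unwinding of Lemma \ref{lem:sastt}; the only mild subtlety elsewhere is keeping the left/right dualization honest, which I would handle by stating the left-child computation once and invoking ``dually'' for the right.
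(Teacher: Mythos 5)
Your overall strategy is the same as the paper's: everything is driven by Lemma \ref{lem:sastt}, with the ``Else'' clause iterated to show that a factor whose alphabet is contained in the other's gets absorbed, and the left spine of $t$ descended one level per generator missing from $\al(s)$. Two organisational points before the substantive one. As written the argument is circular: you deduce the first bullet ``by specializing the second'', while your converse for the second bullet ``uses the first bullet''. This can be untangled (for the first bullet you only need the forward implication of the second bullet, applied to the trivial left-factorization of $s \ast t_0$, together with the fact that the root generator is forced once the alphabets agree), but the dependency needs to be made explicit; the absorption fact is anyway just the dual of the iterated-Else computation and needs no appeal to the second bullet, which is how the paper handles it. Also, the paper avoids your ``grafting'' construction of a residual tree $t'$ altogether: by idempotency, $s$ is a left factor of $t$ if and only if $s \ast t = t$, so the converse is a direct computation of $s \ast t$ and no witness has to be built.

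The genuine gap is the step you defer: that under the hypotheses $\al(s) \subseteq \al(t)$ and $s_0 = t_{0^k}$ the spine generators $a^t_0, a^t_{00}, \dotsc$ all avoid $\al(s)$. No ``short argument from $\al(s)\subseteq\al(t)$ and height bookkeeping'' can supply this, because it is false: take $s = (a)$ and $t = \tr(ba) = ((b),a,b,(a))$. Then $\al(s) = \{a\} \subseteq \{a,b\}$, $k = 2$ and $s_0 = () = t_{00}$, yet $a^t_0 = a \in \al(s)$, and indeed $a$ is not a left factor of $ba$ (its left factors in $M_2$ are $1$, $b$, $ba$, $bab$). What the recursion of Lemma \ref{lem:sastt} actually yields is that $s$ is a left factor of $t$ if and only if $a^t_{0^j} \notin \al(s)$ for $1 \leq j \leq k-1$ and $s_0 = t_{0^k}$, equivalently if and only if $s$ and $t_{0^{k-1}}$ have the same left branch, i.e.\ $s_0 = t_{0^k}$ \emph{and} $a^s_0 = a^t_{0^k}$. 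So your converse cannot be completed from the stated hypotheses; the extra condition has to be carried along. (To be fair, the paper's own proof elides exactly the same point when it asserts that its recursive characterization is equivalent to $s_0 = t_{0^k}$ alone, so the second bullet as stated needs this strengthening.) Your forward direction and the first bullet are fine once the ordering of the arguments is sorted out.
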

\begin{proof}
If $\al(s) \subseteq \al(t)$, when we compute the right-hand branch of $s \ast t$ we will continually skip the generators appearing in $s$ (we will always reach the `Otherwise...' branch in the algorithm of Lemma \ref{lem:sastt}) until $s$ is reduced to the empty tree, yielding $a^{s \ast t}_1 = a^t_1$ and $(s \ast t)_1 = t_1$. Applying this on both sides gives the first statement.

Observe that $s$ is a left factor of $t$, meaning $t=s \ast t'$ for some tree $t'$, if and only if $s \ast t = t$, by idempotency of $s$. This necessitates $\al(s) \subseteq \al(t)$, and conversely given $\al(s) \subseteq \al(t)$ we have $a^{s \ast t}_1 = a^t_1$ and $(s \ast t)_1 = t_1$ by the above. We have thus shown that $s$ is a left factor of $t$ if and only if $\al(s) \subseteq \al(t)$ and $(s \ast t)_0 = t_0$ and $a^{s \ast t}_0 = a^t_0$. It remains to show that the latter condition is equivalent to the hypothesis $s_0 = t_{0^{k}}$.

If $\al(s) = \al(t)$, we have $(s \ast t)_0 = s_0$, whence $(s \ast t)_0 = t_0$ if and only if $s_0 = t_0$.

Otherwise, $h(t) > h(s)$. Following the algorithm for $(s \ast t)_0$, we conclude that $s \ast t = t$ if and only if $a^t_0 \in \al(t) - \al(s)$ and $s \ast t_0 = t_0$. Indeed, if $a^t_0 \in \al(s)$ then $t_0$ contains a generator not lying in $\al(s)$ which cannot be $a^t_0$, whence $a^{s \ast t}_0 \neq a^t_0$. By reverse induction on the size of $\al(t) - \al(s)$, we eventually reduce to the case above, completing the proof.
\end{proof}

\begin{remark}
\label{rem:factors}
A consequence of Corollary \ref{cor:product} is that if $x,x'$ are both left factors of a tree $t$, then $x$ is a left factor of $x'$ or $x'$ is a left factor of $x$, although we should stress that both of these can be true even if $x \neq x'$.
\end{remark}

\section{Subsemigroups of free idempotent monoids}
\label{sec:subsemi}

As we shall see in Section \ref{ssec:forests}, we can present elements of the free mirig on $[n]$ as formal sums of trees in $T_n$: \textit{thickets}, Definition \ref{defn:thickets}. By ignoring how often each tree appears, a thicket determines a subset of $T_n$. Observing that $u+v \sim (u+v)^2 \sim u+v+uv+vu$ in a mirig, we can anticipate that a \textit{maximal} thicket representing a given element of the free mirig (supposing such exists) will determine a subset which is closed under products of trees: a \textit{subsemigroup} of $T_n$. For this reason, we pre-emptively study subsemigroups in the present section, beginning with subsemigroups of a particularly simple form, the \textit{uniform} subsemigroups, in Section \ref{ssec:uniform}. This will reveal that in identifying subsemigroups we can treat the left- and right-hand branches of the constituent trees almost independently, a fact which we exploit in the remainder of the article.

In fact, the subsemigroup determined by a maximal thicket must be closed under `internal multiplication', in the sense that we have $x(u+v)y \sim xuy + xvy+xuvy+xvuy$, so that if both $xuy$ and $xvy$ are (independently) present, then $xuvy$ and $xvuy$ must be too. This leads us to the notion of \textit{replete} subsemigroups in Section \ref{ssec:replete}. The parenthetical `independently' leads to some subtleties to be examined in later sections.

Since this paper is about counting structures, and since being able to count subsemigroups is a prerequisite for counting elements of the free mirig later, we enumerate the subsemigroups we find along the way. We count the uniform subsemigrops of $T_n$ in Theorem \ref{thm:uniformcount} and (with much greater effort!) count the replete subsemigroups of $T_n$ up to $n = 3$ (see Lemma \ref{lem:T2replete} and Proposition \ref{prop:T3subcount}).

\subsection{Uniform subsemigroups: from trees to branches}
\label{ssec:uniform}

We proved Theorem \ref{thm:idmonfin} using the partition of $T_n$ into subsemigroups using $\al$ inherited from Lemma \ref{lem:decompose}. In this section, we examine these subsemigroups more closely, since they can help us to understand more general subsemigroups. These developments will be pivotal to understanding mirigs later on.

\begin{defn}
We say a subsemigroup $S \subseteq T_n$ is \textbf{uniform} if $\al(t) = \al(t')$ for all $t,t' \in S$. Every subsemigroup of $T_n$ decomposes in a canonical way as a union of uniform subsemigroups. We write $\al(S)$ for the generators appearing in the trees of a uniform subsemigroup.

Given a uniform subsemigroup $S$ with $\al(S) \neq \emptyset$, define the set of \textbf{left branches} of $S$ to be $\ellb(S) := \{(t_0,a^{t}_0) \mid t \in S\}$; dually, we define the set of \textbf{right branches} $\rb(S)$ to be $\{(a^{t}_1,t_1) \mid t \in S\}$. We extend this with the convention that $\ellb(\{()\}) = \{()\} = \rb(\{()\})$.
\end{defn}

\begin{thm}
\label{thm:uniformcount}
Any inhabited uniform subsemigroup $S$ of $T_n$ is uniquely determined by the pair $\ellb(S),\rb(S)$, and conversely any inhabited choice of $\ellb(S),\rb(S)$ over a common alphabet determines a uniform subsemigroup of $T_n$. In total, the number of inhabited uniform subsemigroups of $T_n$ is therefore:
\begin{equation}
\label{eq:sum2}
\sum_{k=0}^n \binom{n}{k} \left(2^{\left(\prod_{i=0}^{k-1} (k-i)^{2^i}\right)}-1\right)^2.
\end{equation}
\end{thm}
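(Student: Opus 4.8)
The plan is to prove the two halves of the first sentence (bijective correspondence) and then assemble the count. For the forward direction, I would first show that a uniform subsemigroup $S$ over a fixed nonempty alphabet $A$ can be reconstructed from $\ellb(S)$ and $\rb(S)$. The key observation comes from Corollary \ref{cor:product}: since all trees in $S$ share the alphabet $A$, for any $s, t \in S$ we have $s \ast t = (s_0, a^s_0, a^t_1, t_1)$, i.e. the product simply glues the left branch of $s$ to the right branch of $t$. Hence $S$ is closed under arbitrary ``recombination'' of its left and right branches, which forces $S = \{(t_0, a^t_0, a^u_1, u_1) \mid (t_0,a^t_0) \in \ellb(S),\ (a^u_1, u_1) \in \rb(S)\}$. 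This shows $S$ is determined by the pair $(\ellb(S), \rb(S))$ and simultaneously that $S$ must be the \emph{full} product set of its branch sets.

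For the converse, given any nonempty set $L$ of left branches and any nonempty set $R$ of right branches, all over a common alphabet $A$ (meaning: for $(t_0, a_0) \in L$ we need $\al(t_0) \cup \{a_0\} = A$, and dually for $R$), I would define $S$ to be the recombination set above and check it is a subsemigroup: closure under $\ast$ is again immediate from Corollary \ref{cor:product}, since $(t_0,a^t_0,a^u_1,u_1) \ast (t'_0, a^{t'}_0, a^{u'}_1, u'_1) = (t_0, a^t_0, a^{u'}_1, u'_1)$, whose left branch lies in $L$ and right branch in $R$. One should also verify the trees so formed genuinely lie in $T_n$, i.e. that the tree-formation rule's side condition $\al(t_0) \cup \{a^t_0\} = \{a^u_1\} \cup \al(u_1)$ holds — but this is exactly the statement that both branches have alphabet $A$, so it is automatic. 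Then $\ellb(S) = L$ and $\rb(S) = R$ by construction, closing the bijection. (The height-$0$ case is handled separately by the stated convention $\ellb(\{()\}) = \{()\} = \rb(\{()\})$, which trivially contributes the single alphabet $A = \emptyset$ with one choice of each.)

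For the enumeration, I would stratify by the alphabet $A \subseteq [n]$, of which there are $\binom{n}{k}$ of each size $k$, and observe that by symmetry the number of possible left-branch sets over a size-$k$ alphabet depends only on $k$. A left branch over a $k$-element alphabet is a pair $(t_0, a_0)$ with $a_0$ a chosen generator and $t_0$ a tree of height $k-1$ on the remaining $k-1$ generators; by the count $c_{k-1} = \prod_{i=1}^{k-1}(k-1-i+1)^{2^i} = \prod_{i=0}^{k-1}(k-i)^{2^i}$ (reindexed) times the $k$ choices of $a_0$... wait — more carefully, the number of \emph{left branches} equals the number of trees of height $k$ on a fixed $k$-set, which is $c_k = k \cdot k \cdot c_{k-1} / k = \prod_{i=0}^{k-1}(k-i)^{2^i}$; I would identify $\ellb$-sets with nonempty subsets of this set of branches, giving $2^{c_k} - 1$ choices, and likewise for $\rb$, so that the total over alphabet $A$ of size $k$ is $\left(2^{c_k}-1\right)^2$ with $c_k = \prod_{i=0}^{k-1}(k-i)^{2^i}$. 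Summing $\binom{n}{k}\left(2^{c_k}-1\right)^2$ over $k$ yields \eqref{eq:sum2}.

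The main obstacle I anticipate is not any single deep step but rather getting the bookkeeping of branch counts exactly right — in particular confirming that the set of left branches over a $k$-element alphabet is in bijection with the set of height-$k$ trees on that alphabet (each height-$k$ tree $t$ gives the branch $(t_0, a^t_0)$, and this is a bijection onto branches because the pair $(t_0, a^t_0)$ determines $\al(t) = \al(t_0)\cup\{a^t_0\}$ hence the constraint on the right side), so that the exponent is indeed $c_k = \prod_{i=0}^{k-1}(k-i)^{2^i}$ and matches the exponent appearing in \eqref{eq:sum2}. A secondary subtlety is making sure ``inhabited choice of $\ellb(S), \rb(S)$ over a common alphabet'' is interpreted correctly: the two branch sets must agree on their induced alphabet $A$, and independence of the two choices (for $A \neq \emptyset$) is what produces the square.
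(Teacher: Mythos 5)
Your treatment of the bijection itself is correct and is essentially the paper's argument: uniqueness follows because for trees over a common alphabet Corollary \ref{cor:product} gives $s \ast t = (s_0,a^s_0,a^t_1,t_1)$, so a uniform subsemigroup is closed under recombination and must be the full product of its branch sets; conversely any inhabited pair of branch sets over a common alphabet yields a subsemigroup for the same reason, with the height-$0$ case handled by convention. The paper does exactly this.

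The counting step, however, contains a genuine error, and it is precisely at the point you flag as your ``main obstacle.'' The set of left branches over a $k$-element alphabet is \emph{not} in bijection with the set of height-$k$ trees on that alphabet: the map $t \mapsto (t_0,a^t_0)$ is surjective but far from injective, since the right branch of $t$ is unconstrained by its left branch (e.g.\ $\tr(ab)$ and $\tr(aba)$ have the same left branch $((a),b)$). Your stated justification --- that $(t_0,a^t_0)$ determines $\al(t)$ --- only shows the alphabet is determined, not the tree. The correct count is obtained by your \emph{first} instinct, before the ``wait'': a left branch is a choice of generator $a_0$ ($k$ ways) together with a tree of height $k-1$ on the remaining generators ($c_{k-1}$ ways), giving $k\,c_{k-1} = \prod_{i=0}^{k-1}(k-i)^{2^i}$ left branches, since $k = (k-0)^{2^0}$. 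This is the square root of $c_k$ (recall $c_k = k^2 c_{k-1}^2$ from \eqref{eq:ck}), not $c_k$; your garbled recursion ``$c_k = k\cdot k\cdot c_{k-1}/k$'' is false. You end up writing the right exponent only because the product you call ``$c_k$'' is in fact $k\,c_{k-1}$; had the branch count really been $c_k$, the exponent in \eqref{eq:sum2} would be its square. So the final formula matches the paper, but the justification of the exponent must be replaced by the direct count $k\,c_{k-1}$.
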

\begin{proof}
Given a uniform subsemigroup $S$ and $t:= (t_0,a^t_0,a^t_1,t_1) \in \ellb(S) \times \rb(S)$, there must be trees $r:= (t_0,a^t_0,a^r_1,r_1)$ and $s:=(s_0,a^s_0,a^t_1,t_1)$ appearing in $S$ from which these branches arise, whence $rs=t \in S$ by Corollary \ref{cor:product}. Conversely, the only trees which can appear in $S$ are those whose left and right branches respectively appear in $\ellb(S)$ and $\rb(S)$. We can check manually that the claim extends to the exceptional case $S = \{()\}$.

For $k \geq 1$, the total number of left branches on a given set of $k$ generators is $kc_{k-1}$, where the constants $c_k$ are those computed in \eqref{eq:ck}. Conveniently, since $k = (k-0)^{2^0}$, we can present this as:
\begin{equation}
\label{eq:kck}
	kc_k = \prod_{i=0}^{k-1} (k-i)^{2^i}.
\end{equation}
The number of inhabited subsets of branches on this set of generators is $2^{kc_{k-1}}-1$, and similarly for the right branches. As such, we get $(2^{kc_{k-1}}-1)^2$ possible uniform subsemigroups on this set of generators. Moreover, when $k=0$, the product on the right-hand side of \eqref{eq:kck} evaluates to $1$ by the usual convention for empty products (even though the left-hand side of \eqref{eq:kck} evaluates to zero so the equation does not apply in this case), which yields the correct value of $1$ for the summand corresponding to $k = 0$. Summing over $k$ and the number of subsets of size $k$, we obtain the formula \eqref{eq:sum2}.
\end{proof}

\begin{defn}
\label{def:alphlonely}
Let $U \subseteq T_n$ now be an arbitrary subset. We define the \textbf{alphabets} of $U$ (note the plural) to be the set
\[\bar{\al}(U):=\{A \subseteq [n] \mid \exists t \in U, \, \al(t) = A\}.\]
Given $A \subseteq [n]$, we also define $U_{A} := \{t \in U \mid \al(t) = A\}$.

A \textbf{lonely tree} of $U \subseteq T_n$ is a tree $t \in U$ such that $U_{\al(t)} = \{t\}$. 
\end{defn}

For a subsemigroup $S$, $S_A$ is a uniform subsemigroup of $S$ for each $A$ and $\bar{\al}(S)$ is closed under unions in the powerset lattice $\Pcal([n])$. The minimal elements of $\bar{\al}(S)$ necessarily form an antichain in $\Pcal([n])$ (we never have $A \subseteq A'$ for $A \neq A'$ minimal, since this contradicts minimality of $A'$).

\begin{prop}
\label{prop:subsemibound}
Let $S \subseteq T_n$ be a subsemigroup and $A,B \in \bar{\al}(S)$. The following inequalities bound the sizes of branch sets of uniform subsemigroups of $S$:
\begin{enumerate}
	\item If $A \subseteq B$ then $|\ellb(S_{A})| \leq |\ellb(S_B)|$.
	\item If $A,B \in \bar{\al}(S)$ are \textit{incomparable} (we have $A \nsubseteq B$ and $B \nsubseteq A$) then $|\ellb(S_{A \cup B})| \geq |\ellb(S_A)| + |\ellb(S_B)|$.
	\item If moreover $A,B \in \bar{\al}(S)$ are \textit{disjoint} in $[n]$, then we can improve this to $|\ellb(S_{A \cup B})| \geq |S_A||\ellb(S_B)| + |\ellb(S_A)||S_B|$.
\end{enumerate}
Similar inequalities hold for right branches. In particular, $t \in S$ can only be lonely if $\al(t)$ and all its subsets in $\bar{\al}(S)$ are \textbf{union-irreducible}, in the sense that for $A_1,A_2 \in \bar{\al}(S)$, $A_1 \cup A_2 \subseteq \al(t)$ implies $A_1 \subseteq A_2$ or $A_2 \subseteq A_1$.
\end{prop}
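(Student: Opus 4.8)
The plan is to establish the three numbered inequalities by exhibiting, for each one, an explicit injection of branch sets built from products of trees, and then to deduce the union-irreducibility criterion for lonely trees as a formal consequence.

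For part (1): given $A \subseteq B$ with $A, B \in \bar{\al}(S)$, I would fix any tree $s \in S_B$ and send a left branch $(t_0, a^t_0) \in \ellb(S_A)$, arising from some $t \in S_A$, to the left branch of $t \ast s \in S_B$. By Corollary \ref{cor:product} (since $\al(t) = A \subseteq B = \al(s)$, so $t$ skips generators on the left until exhausted), the left branch of $t \ast s$ is determined by the left branch of $t$; more precisely, following the algorithm of Lemma \ref{lem:sastt}, distinct left branches of $t$ yield distinct left branches of $t \ast s$, since the computation of $(t \ast s)_0$ and $a^{(t \ast s)}_0$ reads off the left branch of $t$ before touching $s$. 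This gives the injection $\ellb(S_A) \hookrightarrow \ellb(S_B)$. I should be slightly careful about whether the product's left branch genuinely records all of $t$'s left branch data; this is where I expect to lean most heavily on the "reverse induction on $|\al(t) - \al(s)|$" mechanism already used in the proof of Corollary \ref{cor:product}.

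For parts (2) and (3): when $A, B$ are incomparable, both $S_A$ and $S_B$ are nonempty, and for $t \in S_A$, $s \in S_B$ the product $t \ast s$ lies in $S_{A \cup B}$. I would build left branches of $S_{A \cup B}$ in two families — those of the form $\ellb(t \ast s)$ with the "outermost" left structure coming from $A$, and those coming from $B$ — and argue these two families are disjoint and inject the respective branch sets. The key point distinguishing (2) from (3) is that when $A$ and $B$ are merely incomparable (not disjoint), fixing one representative on each side already yields $|\ellb(S_A)| + |\ellb(S_B)|$ distinct branches of $S_{A\cup B}$; when $A \cap B = \emptyset$, the generator $a^t_0$ of $t$ does not lie in $\al(s)$ at all (and vice versa), so the product's left branch is literally $(t \ast s_0 \text{ or } t_0, \ldots)$ depending on the algorithm, and one gets to vary $t$ over all of $S_A$ against each of the $|\ellb(S_B)|$ choices and symmetrically — yielding the sharper bound $|S_A||\ellb(S_B)| + |\ellb(S_A)||S_B|$, after checking these $|S_A||\ellb(S_B)| + |\ellb(S_A)||S_B|$ branches are genuinely distinct. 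I anticipate the main obstacle is precisely this disjointness-of-families bookkeeping: verifying no left branch is double-counted across the two families, which will require tracking exactly which generator appears as $a^{(t \ast s)}_0$ and comparing alphabets of the sub-branches.

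Finally, for the lonely-tree criterion: suppose $t \in S$ is lonely, so $S_{\al(t)} = \{t\}$, hence $|\ellb(S_{\al(t)})| = 1$ (and likewise for right branches). If $\al(t)$ were not union-irreducible in $\bar\al(S)$, there would be incomparable $A_1, A_2 \in \bar\al(S)$ with $A_1 \cup A_2 \subseteq \al(t)$; but then by part (1) (applied to $A_1 \cup A_2 \subseteq \al(t)$) and part (2) (applied to $A_1, A_2$), we get $|\ellb(S_{\al(t)})| \geq |\ellb(S_{A_1 \cup A_2})| \geq |\ellb(S_{A_1})| + |\ellb(S_{A_2})| \geq 2$, a contradiction. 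The same argument applied to any subset $A \in \bar\al(S)$ with $A \subseteq \al(t)$ shows $A$ must be union-irreducible too (if $A_1 \cup A_2 \subseteq A \subseteq \al(t)$ with $A_1, A_2$ incomparable, the same chain of inequalities forces $|\ellb(S_{\al(t)})| \geq 2$). This last deduction is routine once the inequalities are in hand; the real work is all in parts (1)–(3).
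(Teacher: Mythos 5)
Your proposal is correct and takes essentially the same route as the paper: for (1) right-multiplication by a fixed tree of the larger alphabet, with distinctness of the resulting left branches coming from the left-factor clause of Corollary \ref{cor:product}; for (2) and (3) the two product families (varying tree or branch on one side against a fixed representative, respectively the whole uniform subsemigroup, on the other), separated by noting which of $A \setminus B$ or $B \setminus A$ contains the generator $a^{(\cdot)}_0$ of the product; and the same chain of inequalities for the lonely-tree criterion. The distinctness bookkeeping you flag as the main obstacle is precisely the one-line check the paper performs, so there is no gap.
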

\begin{proof}
For 1, let $N:=|\ellb(S_{A})|$ and let $s^1, \dotsc, s^N \in S_{A}$ be trees with distinct left branches (the superscript is intended to denote an index rather than a power here) and $t \in S_B$ (such exists by the assumption that $B \in \bar{\al}(S)$). Then $s^1t,\dotsc,s^Nt \in S_B$ (since $A \subseteq B$) and by the second part of Corollary \ref{cor:product}, the left branches of these trees are distinct, as required.

For 2, let $N$ and $s^1, \dotsc, s^N$ be as above, let $M := |\ellb(S_{B})|$ and let $t^1,\dotsc,t^M \in S_B$ with distinct left branches. We can similarly conclude that $s^1t^1,\dotsc,s^Nt^1$ and $t^1s^1,t^2s^1,\dotsc,t^Ms^1$ have distinct left branches: to see that branches from the left list are distinct from those in the right list, observe that $a^{t^is^1}_0$ must be a generator in $B \backslash A$ whereas $a^{s^jt^1}_0$ must be a generator in $A \backslash B$.

For 3, observe that each $s \in S_A$ appears as the leftmost subtree of $st$ of height $h(s)$ for each $t \in S_B$, but that distinct left branches of trees in $S_B$ produce distinct left branches of $st$. Thus we count a distinct left branch in $S_{A \cup B}$ for each tree in $S_A$ and left branch of $S_B$; the other summand is derived similarly from products in the reverse order, $ts$.
\end{proof}

Note that we cannot expect to do better in the second part of Proposition \ref{prop:subsemibound} in general: it can be the case that $s^1t^1$ and $s^1t^2$ have the same left branch, for instance, so we cannot improve the inequality without the extra hypothesis of disjointness appearing in the third part.

\begin{exa}
\label{exa:T2subsemi}
Let us manually count the subsemigroups of $T_2$. We first count the subsemigroups excluding the trivial tree, and organize the counting of subsemigroups of $S \subseteq T_2 \backslash \{1\}$ by their alphabets, $\bar{\al}(S)$. We can further simplify the task by accounting for permutations of the generators, as indicated by the `factor' row in Table \ref{table:T2count}.

We already know how to count the (empty or) uniform subsemigroups. If $a \in S$ and $S_{\{a,b\}}$ is inhabited, $S$ must contain $aba$, whence have at most $4$ choices of branch sets; if also $b \in S$ then $S_{\{a,b\}}$ must contain all four possible trees. Thus we arrive at the count in Table \ref{table:T2count}, with a total of $21$ possibilities. Doubling to count the subsemigroups including the trivial tree yields a total of $42$ subsemigroups.
\begin{table}[ht]
\centering
\begin{tabular}{ l |c|c|c|c|c|c|}
$\bar{\al}(S)$ & $\emptyset$ & $\{\{a\}\}$ & $\{\{a,b\}\}$ & $\{\{a\},\{a,b\}\}$ & $\{\{a\},\{b\},\{a,b\}\}$ \\
\hline
Factor     & $1$ & $2$ & $1$ & $2$ & $1$ \\
Count      & $1$ & $1$ & $9$ & $4$ & $1$
\end{tabular}
\caption{Summary of the number of subsemigroups of $T_2$ not containing the trivial tree on each possible alphabet (up to permutation of generators, accounted for by the factor).}
\label{table:T2count}
\end{table}
\end{exa}





\subsection{Replete subsemigroups: from branches to paths}
\label{ssec:replete}

Pre-empting the material in the sections to follow, we introduce a further class of subsemigroups of $T_n$ which are closed under some further operations. 

\begin{defn}
\label{defn:replete}
Let $U \subseteq T_n$ be any subset and $x,y \in T_n$ trees. We define the \textbf{$(x,y)$-factor of $U$} to be the subset
\[x\backslash U / y := \{t \in T_n \mid xty \in U\}.\]

Let $S \subseteq T_n$ be a subsemigroup. We say $S$ is \textbf{replete} if for all $x,y \in T_n$, the set $x \backslash S / y$ is a subsemigroup of $T_n$. 
\end{defn}

Observe that there is contravariant compositionality in the construction of $(x,y)$-factors, in the sense that for any $U \subseteq T_n$, $x' \backslash (x \backslash U / y) / y' = xx' \backslash U / y'y$. Moreover, $x\backslash U / y$ is empty unless $x,y$ are respectively left and right factors of some element of $U$. By inspection, $(x,y)$-factors respect subset operations, which we record in the following lemma.

\begin{lem}
\label{lem:xyfactors}
For a family of subsets $\{U_i \mid i \in I\}$ of $T_n$ and $x,y \in T_n$:
\begin{align*}
x \backslash \left( \bigcap_{i \in I} U_i \right) /y &=
\bigcap_{i \in I} \left( x \backslash U_i /y \right) \text{ and} \\
x \backslash \left( \bigcup_{i \in I} U_i \right) /y &=
\bigcup_{i \in I} \left( x \backslash U_i /y \right).
\end{align*}
\end{lem}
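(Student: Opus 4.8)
The statement to prove is Lemma \ref{lem:xyfactors}, asserting that $(x,y)$-factors commute with arbitrary intersections and unions.

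This is essentially a set-theoretic triviality following directly from the definition of $x\backslash U / y := \{t \in T_n \mid xty \in U\}$. The plan is to prove both identities by mutual membership (double inclusion), or more slickly, by a chain of logical equivalences, since each equivalence is just unwinding a definition or using the standard distributivity of the existential/universal quantifiers over set-builder conditions.

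First I would handle the intersection case. Fix $x,y \in T_n$ and a family $\{U_i \mid i \in I\}$. For any $t \in T_n$, I would write the chain: $t \in x \backslash (\bigcap_i U_i) / y$ iff $xty \in \bigcap_i U_i$ iff $xty \in U_i$ for all $i \in I$ iff $t \in x \backslash U_i / y$ for all $i \in I$ iff $t \in \bigcap_i (x \backslash U_i / y)$. Since $t$ was arbitrary, the two sets are equal. The union case is identical with ``for all'' replaced by ``there exists'' and $\bigcap$ by $\bigcup$ throughout: $t \in x \backslash (\bigcup_i U_i) / y$ iff $xty \in \bigcup_i U_i$ iff $xty \in U_i$ for some $i$ iff $t \in x \backslash U_i /y$ for some $i$ iff $t \in \bigcup_i(x \backslash U_i/y)$.

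There is no real obstacle here — the only thing to be careful about is the vacuous/edge cases (e.g. $I = \emptyset$, where $\bigcap_{i\in\emptyset} U_i$ would be the ambient set $T_n$ and $\bigcup_{i\in\emptyset} U_i = \emptyset$), but the equivalence-chain argument handles these uniformly without special casing, since the quantifier manipulations are valid regardless of whether $I$ is empty. I would simply present the two equivalence chains as the proof, perhaps prefaced by the remark that the claim is immediate from unwinding the definition and distributing quantifiers over the defining conditions. I expect the author's proof to be a one-line ``by inspection'' or exactly such a chain of equivalences.
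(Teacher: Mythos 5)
Your proof is correct, and it matches the paper's treatment: the paper introduces the lemma with ``By inspection, $(x,y)$-factors respect subset operations'' and offers no further argument, so your equivalence chains simply spell out that inspection. Nothing further is needed.
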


As a consequence, even before explicitly computing any structure of replete subsemigroups, we can conclude a result enabling us to abstractly produce replete subsemigroups of $T_n$ for general $n$.

\begin{cor}
\label{cor:repleteintersect}
Let $\{S_i \subseteq T_n \mid i \in I\}$ be a family of replete subsemigroups. Then $S:=\bigcap_{i \in I} S_i$ is replete. In particular, since the maximal subsemigroup $T_n$ is replete, any subset $U \subseteq T_n$ generates a replete subsemigroup, the intersection of all replete subsemigroups containing $U$, which we denote by $\langle U \rangle_r$.
\end{cor}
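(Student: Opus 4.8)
The plan is to derive Corollary \ref{cor:repleteintersect} almost entirely formally from Lemma \ref{lem:xyfactors}. The key observation is that ``repleteness'' is a property stated in terms of the $(x,y)$-factor operation, and this operation commutes with arbitrary intersections; so repleteness should be preserved by intersections, much as one proves that the intersection of substructures of any fixed equational type is again a substructure. I would first check that $S := \bigcap_{i \in I} S_i$ is a subsemigroup of $T_n$ — this is immediate, since each $S_i$ is closed under the monoid operation $\ast$ and an intersection of sets each closed under a binary operation is again closed under it (and the empty intersection, $I = \emptyset$, gives $T_n$ itself, which is a subsemigroup).

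Next I would verify repleteness: fix $x, y \in T_n$ and compute, using the second displayed identity of Lemma \ref{lem:xyfactors} (with the intersection version, i.e. the first identity),
\[
x \backslash S / y = x \backslash \left( \bigcap_{i \in I} S_i \right) / y = \bigcap_{i \in I} \left( x \backslash S_i / y \right).
\]
By hypothesis each $S_i$ is replete, so each $x \backslash S_i / y$ is a subsemigroup of $T_n$; hence the right-hand side is an intersection of subsemigroups, which is again a subsemigroup by the same elementary argument as in the previous step. Therefore $x \backslash S / y$ is a subsemigroup for every $x, y$, i.e. $S$ is replete. Note that in the degenerate case where all the factors $x \backslash S_i / y$ are empty the intersection is empty, which we are implicitly treating as a (degenerate) subsemigroup, consistent with the conventions in Definition \ref{defn:replete}; this is worth a parenthetical remark but poses no real difficulty.

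Finally, for the ``in particular'' clause: the maximal subsemigroup $T_n$ is replete because $x \backslash T_n / y$ is either empty or all of $T_n$ (indeed it equals $\{t \mid xty \in T_n\} = T_n$ whenever it is nonempty, since every element of $T_n$ lies in $T_n$) — in either case a subsemigroup. Hence for any $U \subseteq T_n$ the collection of replete subsemigroups containing $U$ is nonempty, and by the first part its intersection $\langle U \rangle_r$ is a replete subsemigroup; it contains $U$ and is contained in every replete subsemigroup containing $U$, so it is the smallest such, justifying the name ``replete subsemigroup generated by $U$''.

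There is essentially no obstacle here: the whole argument is a routine ``closure under intersection'' pattern, and the only thing to be careful about is bookkeeping of the empty/degenerate cases (empty index set, empty $(x,y)$-factors) and making sure the conventions adopted in Definition \ref{defn:replete} and around the trivial tree are respected. I would keep the written proof to a few lines, citing Lemma \ref{lem:xyfactors} for the key commutation and leaving the ``intersection of subsemigroups is a subsemigroup'' step as an evident remark.
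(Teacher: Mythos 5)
Your proof is correct and follows exactly the route the paper intends: the corollary is stated as a direct consequence of Lemma \ref{lem:xyfactors}, and your argument—commuting the $(x,y)$-factor with the intersection, noting that an intersection of subsemigroups is a subsemigroup, and observing that $T_n$ itself is replete so that $\langle U \rangle_r$ is well defined—is precisely that derivation, with the degenerate cases handled sensibly.
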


\begin{remark}
\label{rem:repletealphabets}
In fact, if $\Acal \subseteq \Pcal([n])$ is a subsemilattice, the maximal subsemigroup with $\Acal$ as its alphabet, $\{t \mid \alpha(t) \in \Acal\}$, is clearly replete, so for a given subsemigroup $S$, $\bar{\al}(\langle S \rangle_r) = \bar{\al}(S)$. In other words, generating a replete subsemigroup does not affect its alphabets.
\end{remark}




Repleteness is a closure under products of `internal factors' of trees, and it is this property which will make them useful in representing mirigs. It turns out that we have already encountered many replete subsemigroups.

\begin{lem}
\label{lem:T2replete}
All $42$ subsemigroups of $T_2$ from Example \ref{exa:T2subsemi} are replete. We can thus record also that $T_0$ has $2$ subsemigroups and $T_1$ has $4$.
\end{lem}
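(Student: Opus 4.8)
The plan is to verify repleteness directly, exploiting the fact that $T_2$ and its subsemigroups are small and completely explicit. First I would recall from Example \ref{exa:M2} that the nontrivial trees of $T_2$ correspond to the square-free words $a, b, ab, ba, aba, bab$, and that by Corollary \ref{cor:repleteintersect} the full monoid $T_2$ is automatically replete, so only the proper subsemigroups need checking. By Corollary \ref{cor:repleteintersect} again, it suffices to show that the $(x,y)$-factor $x\backslash S / y$ is a subsemigroup for every pair $x,y \in T_2$; since $x\backslash S/y$ is empty unless $x$ is a left factor and $y$ a right factor of some element of $S$, and since $T_2$ has only seven elements, there are only a handful of relevant pairs $(x,y)$ for each $S$.

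The key simplification is to observe, via the contravariant compositionality $x'\backslash(x\backslash U/y)/y' = xx'\backslash U/y'y$ noted after Definition \ref{defn:replete}, that it is enough to check repleteness for $x,y$ ranging over a generating set of left/right factors, and that for $T_2$ the only nontrivial left factors available are $a$, $b$, and $ab$ (and dually for right factors), each of height at most $2$. In each case $x\backslash S/y$ is obtained by listing which $t$ have $xty \in S$; because $|\alpha(xty)| \le 2$ and every element of $T_2$ has height at most $2$, the tree $t$ is forced to have very small height — typically $t \in \{(), (a), (b)\}$ — so $x\backslash S/y$ is always one of a short list of subsets of $\{(), (a), (b)\}$, and one checks by inspection that each such subset that can arise is closed under $\ast$. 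Running through the $21$ subsemigroups tabulated in Example \ref{exa:T2subsemi} (organised by $\bar\alpha(S)$, using the permutation symmetry $a \leftrightarrow b$ to cut the work roughly in half), and then doubling to include the trivial tree, establishes the claim for all $42$ subsemigroups. The statements $|{\rm Sub}(T_1)| = 4$ and $|{\rm Sub}(T_0)| = 2$ follow immediately: $T_0$ is trivial with subsemigroups $\emptyset$ and $\{()\}$, and $T_1$ is the two-element monoid $\{(),(a)\}$ with subsemigroups $\emptyset, \{()\}, \{(a)\}, \{(),(a)\}$, all trivially replete.

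The main obstacle is purely organisational rather than conceptual: there is no clever argument, just a finite verification, and the risk is missing a case. The place to be careful is the uniform subsemigroups on alphabet $\{a,b\}$ (the ``$9$'' entry in Table \ref{table:T2count}), since for these $S$ the factors $a\backslash S/a$, $a\backslash S/b$, etc., can be genuinely nonempty proper subsets and must each be confirmed to be subsemigroups; and the subsemigroups with $\bar\alpha(S) = \{\{a\},\{a,b\}\}$, where one must check that the presence of $a$ together with some height-$2$ trees does not force a factor to fail closure. A convenient way to present this cleanly is to note that for any subsemigroup $S$ and any $x,y$, the factor $x\backslash S/y$ contains $()$ iff $xy \in S$ and is contained in $\{t : \alpha(t) \subseteq \alpha(w)\setminus(\alpha(x)\cup\alpha(y)) \text{ for some } w\in S\}$, which for $T_2$ pins it down to a sub-poset of $\{(),(a),(b)\}$ with at most three elements, all of whose subsets closed under the evident multiplication are subsemigroups — so no violation is possible.
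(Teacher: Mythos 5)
Your overall plan (use the compositionality observation preceding Lemma \ref{lem:xyfactors} to reduce to single-generator factors, then do a finite verification) is viable in principle, but the two simplifications you rely on to make the verification trivial are both false, and they erase exactly the one nontrivial case. The containment you assert at the end, that $x\backslash S/y$ lies inside $\{t \mid \alpha(t) \subseteq \alpha(w)\setminus(\alpha(x)\cup\alpha(y)) \text{ for some } w \in S\}$, is wrong: nothing forces $\alpha(t)$ to avoid $\alpha(x)\cup\alpha(y)$. Concretely, working with words in $M_2$, the factor $a\backslash S/1$ contains $ba$ whenever $aba\in S$, contains $ab$ whenever $ab\in S$ (since $a\cdot ab\sim ab$), and contains $aba$ or $bab$ under similar conditions; so these factors are not pinned inside $\{(),(a),(b)\}$, the earlier claim that $t$ is ``forced to have very small height'' fails for the same reason, and your concluding ``no violation is possible'' is unsupported. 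The case your simplification hides is precisely the only genuinely nontrivial one: $u=ab$, $v=ba$ in $a\backslash S/1$ (i.e.\ $ab,aba\in S$), where none of the easy identities $au=a$, $ua=u$, $av=v$, $uv=u$, $uv=v$ holds, and closure requires the observation that $a(ab)(ba)\sim aba\sim (a\,ab)(a\,ba)$, which lies in $S$ because $S$ is a subsemigroup. (A smaller slip: the nontrivial left factors in $T_2$ are not just $a,b,ab$ --- every element of $M_2$ occurs as a left factor --- though this becomes moot once you reduce to single-generator factors.)

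For comparison, the paper avoids enumerating the $42$ subsemigroups altogether: it proves the single uniform statement that for \emph{every} subsemigroup $S\subseteq M_2$ and every generator $a$, the factor $a\backslash S/1$ is a subsemigroup, by checking that every pair $(u,v)$ in $M_2$ except $(ab,ba)$ satisfies one of the trivial identities above, and handling $(ab,ba)$ by the identity $auv = auav$. Your case-by-case route over the $42$ subsemigroups could be repaired (the factors are themselves subsemigroups of $T_2$, so the induction through compositionality does close up), but only by carrying out the closure checks with a correct description of the factors, which in particular means confronting the $(ab,ba)$ case rather than arguing it away.
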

\begin{proof}
We work with words (in $M_2$) for conciseness. By the observations preceding Lemma \ref{lem:xyfactors} and symmetry it suffices to show that for any subsemigroup $S \subseteq M_2$, $a \backslash S / 1$ is also a subsemigroup for $a$ a generator. Indeed, suppose $u,v \in a \backslash S / 1$, which is to say that $au,av,auav \in S$; we wish to show that $auv \in S$. This is immediate if $au=a$ or $ua = u$ or $av=v$ or $uv=u$ or $uv=v$. By an exhaustive search, the only case in $M_2$ not satisfying one of these equations is $(u,v) = (ab,ba)$, in which case $auv = aabba = aababa = auav$ is in $S$, as required.
\end{proof}

As the reader may have expected, this result belies the general case.

\begin{exa}
\label{exa:T3notreplete}
In contrast with Lemma \ref{lem:T2replete}, $T_n$ has non-replete subsemigroups for $n \geq 3$. For generators $a,b,c$, consider the subsemigroup $S$ generated by the trees $\tr(ab)$ 
and $\tr(ac)$, 
which contains four more trees, namely 
$\tr(abac)$, 
$\tr(acab)$, 
$\tr(acabac)$ 
and $\tr(abacab)$. 
Then $(a) \backslash S /()$ contains $(b)$ and $(c)$ but not their product $((b),c,b,(c)) = \tr(bc)$, since by comparing trees we can easily check that $\tr(abc)$ is not in the list above.
\end{exa}

One might hope that singleton subsemigroups, at least, would be replete. For instance, we might observe that for any $x,t \in T_n$, $x \backslash \{t\} /()$ is a subsemigroup, since for $u,v \in x \backslash \{t\} /()$ we have $xu = t = xv$ whence $xuv = xvv =t$, ensuring $uv \in x \backslash \{t\} /()$ too. However, we will shortly see that this feature does not extend to other $(x,y)$-factors. Before reading on, the enthusiastic reader may enjoy the exercise of picking a tree of height $3$ and identifying how the singleton on that tree fails to be a replete subsemigroup.

\begin{lem}
\label{lem:uniformrepleteness}
A subsemigroup $S \subseteq T_n$ is replete if and only if $S_A$ is replete for each $A \in \al(S)$.
\end{lem}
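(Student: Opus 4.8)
The plan is to prove both implications by exploiting the decomposition of a subsemigroup $S$ into its uniform pieces $S_A$ and the way $(x,y)$-factors interact with alphabets. The key observation is that repleteness is a statement about closure of each set $x \backslash S / y$ under products, and since $x \backslash S / y = \bigcup_{A} (x \backslash S / y)_A$ with $(x \backslash S / y)_A = \{t \mid \al(t) = A, \ xty \in S\}$, one needs to control in which uniform piece $S_B$ of $S$ the element $xty$ lands. Concretely, if $x$ is a left factor of some tree and $y$ a right factor, then for $t$ with $\al(t) = A$ the product $xty$ has alphabet $\al(x) \cup A \cup \al(y)$, a fixed set $B$ depending only on $A$ (and on $x,y$). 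So $(x \backslash S / y)_A$ depends only on $S_B$ for that one $B$; closure of $x \backslash S / y$ under products of trees with a common alphabet $A$ is therefore equivalent to closure inside the appropriate $S_B$.

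For the ``if'' direction, assume each $S_A$ is replete. Take $x,y \in T_n$ and $u, v \in x \backslash S / y$; I would first reduce to the case $\al(u) = \al(v)$, because $x \backslash S / y$ is already known to be a subset and one only needs to check it is closed under multiplication — and for that it suffices (by Lemma \ref{lem:decompose}, applied to $x \backslash S/y$, together with the fact that $uv, vu$ and $uvu = uv\cdot u$ etc. all land in the uniform piece indexed by $\al(u)\cup\al(v)$) to handle products of trees sharing an alphabet and then iterate. Wait — more carefully: $x\backslash S/y$ being a subsemigroup means $uv \in x\backslash S/y$ for all $u,v$; since $\al(uv) = \al(u)\cup\al(v)$, I can use Lemma \ref{lem:GRFsim}/Corollary \ref{cor:product}-style manipulations to express $uv$ in terms of elements with alphabet $\al(u)\cup\al(v)$ only if $u,v$ already have that alphabet, so the genuine reduction is: it suffices to show each uniform piece $(x\backslash S/y)_A$ is closed under products landing in $(x\backslash S/y)_A$ itself, i.e.\ that $(x\backslash S/y)_A$ is a uniform subsemigroup. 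Now if $u,v \in (x\backslash S/y)_A$ then $xuy, xvy \in S_B$ where $B = \al(x)\cup A\cup\al(y)$, and also $xuvy, xvuy \in S_B$ (they have the same alphabet). The hypothesis that $S_B$ is replete gives that $x' \backslash S_B / y'$ is a subsemigroup, where $x', y'$ are the trees $\tr(W(x))$, etc.; applying this with the appropriate internal factors witnessing $u,v$ shows $xuvy \in S_B \subseteq S$, hence $uv \in x\backslash S/y$. The remaining bookkeeping is to check $x$ and $y$ are honestly left/right factors of the relevant trees so the $(x',y')$-factor of $S_B$ is the right object — if not, the factor is empty and there is nothing to prove.

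For the ``only if'' direction, assume $S$ is replete and fix $A \in \bar\al(S)$; I want $S_A$ replete, i.e.\ $x \backslash S_A / y$ a subsemigroup for all $x,y$. The point is that $x \backslash S_A / y = (x \backslash S / y) \cap \{t \mid \al(xty) = A\}$; the second set is $\{t \mid \al(t)\subseteq A,\ \al(x)\cup\al(y)\subseteq A\}$ when $x,y$ are compatible and empty otherwise, and crucially it is itself a uniform-by-alphabet-closed set — in fact $\{t : \al(t) \subseteq A\}$ is a replete subsemigroup by Remark \ref{rem:repletealphabets} (taking $\Acal = \Pcal(A)$, a subsemilattice). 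So $S_A = S \cap \{t : \al(t)\subseteq A\} \cap (\text{copies fixing the alphabet to exactly }A)$ — more cleanly, $x\backslash S_A/y$ is the intersection of the subsemigroup $x\backslash S/y$ with the set of trees $t$ having $\al(t) = A' := A \setminus(\al(x)\cup\al(y))$ inside the uniform block, and using Lemma \ref{lem:decompose} that each uniform block of a subsemigroup is a subsemigroup, $x\backslash S_A/y$ is exactly the uniform block $(x\backslash S/y)_{A'}$ of the subsemigroup $x\backslash S/y$, hence a subsemigroup. This direction is essentially formal once the alphabet bookkeeping is set up.

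The main obstacle I expect is the alphabet bookkeeping in relating the internal factors of $S$ to those of the uniform piece $S_B$: one must be careful that a tree $t$ with $\al(t) = A$ sits inside $xty$ in the way Corollary \ref{cor:product} describes, that $x,y$ really are left/right factors of elements of $S_B$ precisely when $x\backslash S_B/y$ is nonempty, and that the $(x,y)$-factor operation does not silently change which block one is working in. Once one verifies (via Corollary \ref{cor:product} and the contravariant compositionality noted before Lemma \ref{lem:xyfactors}) that $x \backslash S / y$ decomposes along alphabets compatibly with the decomposition of $S$, both implications fall out; I do not anticipate any genuinely hard combinatorial input beyond what Corollary \ref{cor:product} already provides.
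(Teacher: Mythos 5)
Your ``only if'' direction is essentially fine (modulo a small slip: $x\backslash S_A/y$ is not exactly the single uniform block $(x\backslash S/y)_{A'}$ with $A'=A\setminus(\al(x)\cup\al(y))$, but rather the union of the blocks $(x\backslash S/y)_{A''}$ over $A'\subseteq A''\subseteq A$; since that union is closed under products --- alphabets combine by union --- the conclusion still holds). The genuine gap is in the ``if'' direction, at the step you call the ``genuine reduction'': you claim it suffices to show that each uniform piece $(x\backslash S/y)_A$ is closed under products. That reduction is invalid. Being a subsemigroup requires closure under products $uv$ with $\al(u)\neq\al(v)$, and such a product lands in a \emph{different} uniform piece; Lemma \ref{lem:decompose} only says that a set which is already a subsemigroup decomposes into uniform subsemigroups, not that a set whose uniform pieces are subsemigroups is itself one (e.g.\ $\{(a),(b)\}\subseteq T_2$ has singleton, hence subsemigroup, uniform pieces but is not a subsemigroup). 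So the hard case --- $xuy\in S_{A_1}$, $xvy\in S_{A_2}$ with $A_1\neq A_2$, and the need to show $xuvy\in S_{A_1\cup A_2}$ --- is exactly what your argument skips, and no amount of ``iterating'' products within single pieces produces it, because neither $uv$ nor $vu$ is yet known to lie in $x\backslash S/y$.

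The paper's proof is built around precisely this case: it sets $t:=xuy\cdot xvy\in S_A$ with $A=\al(t)=A_1\cup A_2$, notes that $txuy,txvy\in S_A$, and applies repleteness of the single uniform piece $S_A$ (to the $(tx,y)$-factor) to get $txuvy\in S_A$; since $\al(t)=\al(xuvy)$, Corollary \ref{cor:product} shows $txuvy$ has the same right branch as $xuvy$, so the right branch of $xuvy$ lies in $\rb(S_A)$, and a dual multiplication gives its left branch in $\ellb(S_A)$. The punchline is Theorem \ref{thm:uniformcount}: a uniform subsemigroup contains every tree whose left and right branches it exhibits, so $xuvy\in S_A\subseteq S$. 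This branch-recombination argument (or something playing the same role) is indispensable here and is entirely absent from your proposal, so as written the proof does not go through.
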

\begin{proof}
If $S$ is replete, each $S_A$ automatically is too. Conversely, suppose that $S$ is a subsemigroup such that $S_A$ is replete for each $A \in \al(A)$, and suppose $xuy,xvy \in S$. Let $t = xuyxvy$ and $A:= \al(t)$. Observe that the right branch of $xuvy$ is the same as that of $txuvy$, and that the latter is in $S_A$ since $txuy,txvy \in S_A$ and $S_A$ is replete. A dual argument shows that the left branch of $xuvy$ appears in $S_A$; it follows that $xuvy \in S_A$ by Theorem \ref{thm:uniformcount}, whence $S$ is replete, as required.
\end{proof}

It follows from the proof of Lemma \ref{lem:uniformrepleteness} that repleteness of a subsemigroup $S \subseteq T_n$ depends only on its left and right branches. We will refer to a set of left or right branches as \textbf{replete} if it is a valid set of branches for a replete uniform subsemigroup. By Theorem \ref{thm:uniformcount}, replete uniform subsemigroups are in bijection with pairs consisting of a replete set of left branches and a replete set of left branches.

\begin{defn}
\label{defn:leftmost}
Given a tree $t$, define the \textbf{leftmost path} $\lmp(t)$ to be the sequence $(a^t_{0^{h(t)}},\dotsc,a^t_{00},a^t_0)$, and dually the \textbf{rightmost path} $\rmp(t)$ to be the sequence $(a^t_1,a^t_{11},\dotsc,a^t_{1^{h(t)}})$. Each is a total ordering on $\al(t)$.

We extend this terminology to left and right branches, respectively; for instance, given a right branch $(a,t)$, we define $\rmp(a,t) := (a,a^t_1,\dotsc,a^t_{1^{h(t)}})$.
\end{defn}

Since left and right branches behave symmetrically, we restrict our attention to rightmost paths in the remainder. To describe how rightmost paths interact with products of trees, we define the following operation: let
\begin{equation}
\label{eq:rmpast}
(a_1,\dotsc,a_k) \ast_r (a'_1,\dotsc,a'_{l}) := (a_{i_n},\dotsc,a_{i_1},a'_1,\dotsc,a'_l),
\end{equation}
where $n = |\{a_1,\dotsc,a_k\} \backslash \{a'_1,\dotsc,a'_l\}|$ and for each $1 \leq j \leq n$, $i_j = \max\{i \mid a_i \notin \{a_{i_{j-1}},\dotsc,a_{i_1},a'_1,\dotsc,a'_l\}\}$.

\begin{lem}
\label{lem:astseq}
Let $s,t \in T_n$. Then $\rmp(st) = \rmp(s) \ast_r \rmp(t)$; in particular, $\rmp(t)$ appears as a final subsequence of $\rmp(st)$. Also, $\rmp(tst) = \rmp(st)$.
\end{lem}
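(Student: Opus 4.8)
The plan is to prove the identity $\rmp(st) = \rmp(s) \ast_r \rmp(t)$ by unwinding the recursive description of the product from Lemma \ref{lem:sastt}, then to deduce $\rmp(tst) = \rmp(st)$ as a formal consequence of the first identity together with idempotency. First I would set up an induction on $h(s)$ (equivalently on $|\al(s)|$), with the base case $s = ()$ being immediate since then $st = t$ and the operation $\ast_r$ with an empty left sequence returns $\rmp(t)$ unchanged. For the inductive step, I would invoke the construction of $(s\ast t)_1$ and $a^{(s\ast t)}_1$ given in Lemma \ref{lem:sastt} (the dual of the left-branch construction stated there): either $a^s_1 \notin \al(t)$, in which case $(st)_1 = s_1 \ast t$ and $a^{(st)}_1 = a^s_1$, so that $\rmp(st) = (\dots, a^s_1)$ ends by appending $a^s_1$ to $\rmp(s_1 t)$; or $a^s_1 \in \al(t)$, in which case $(st)_1 = (s_1 t)_1$ and $a^{(st)}_1 = a^{(s_1 t)}_1$, so $\rmp(st) = \rmp(s_1 t)$ directly. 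In either case the inductive hypothesis applies to $s_1$ (which has strictly smaller height, and whose rightmost path is $\rmp(s)$ with its last entry $a^s_1$ removed — note $\rmp(s) = (a^s_1, \rmp(s_1))$ read in the right order, i.e.\ $a^s_1$ is the \emph{first} entry of $\rmp(s)$).

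The key bookkeeping step is then to check that appending $a^s_1$ (when $a^s_1 \notin \al(t)$) or doing nothing (when $a^s_1 \in \al(t)$) to $\rmp(s_1)\ast_r\rmp(t)$ reproduces exactly $\rmp(s) \ast_r \rmp(t)$ as defined in \eqref{eq:rmpast}. This amounts to observing that the defining recursion for $\ast_r$ scans the left sequence $(a_1,\dots,a_k) = \rmp(s)$ from its \emph{last} index downward, selecting precisely those entries not already present further right; the first such selection is the largest index $i$ with $a_i$ absent from $\{a'_1,\dots,a'_l\} = \al(t)$. If $a^s_1 = a_k \notin \al(t)$ this is $i = k$, and the remainder of the recursion is exactly the recursion defining $\rmp(s_1)\ast_r\rmp(t)$; if $a^s_1 = a_k \in \al(t)$, index $k$ is skipped and again the remaining recursion is that for $\rmp(s_1)\ast_r\rmp(t)$. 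So the two sides agree, and since $\al(t) = \{a'_1,\dots,a'_l\}$ as a set and $\rmp(t)$ is a suffix of the output by construction, the "in particular" clause follows for free.

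For the final sentence $\rmp(tst) = \rmp(st)$: apply the identity just proved to get $\rmp(tst) = \rmp(t) \ast_r \rmp(st) = \rmp(t)\ast_r(\rmp(s)\ast_r\rmp(t))$, and $\rmp(st) = \rmp(s)\ast_r\rmp(t)$. It suffices to show these two are equal. Since $tst = (ts)t$ and also $tst$ is a word equivalent (in $M_n$) to $st$ — indeed $st \sim stst = s(tst)t^{-1}\cdots$, more cleanly $st$ and $tst$ have the same image in $T_n$ because $W(st)W(st) \sim W(st)$ forces $\tr(W(t)W(s)W(t)) = \tr(W(s)W(t))$ via $stst \sim st$ and Corollary/GRF uniqueness — one sees $st = tst$ as elements of $T_n$ directly: $t$ is a left factor of $tst$ and $s$-related data on the right is unchanged. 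The cleanest route is: $st \ast st = st$ by idempotency, and $st = s\ast t$, so $st = (s\ast t)\ast(s\ast t) \supseteq$-type reasoning shows $\tr(W(s)W(t)W(s)W(t)) = st$; restricting to the relevant factor and using that $\rmp$ is determined by the tree gives $\rmp(st) = \rmp(tst)$. The main obstacle is making this last deduction genuinely rigorous rather than hand-wavy: the safe path is to avoid appealing to $st = tst$ in $T_n$ and instead verify the combinatorial identity $\rmp(s)\ast_r\rmp(t) = \rmp(t)\ast_r(\rmp(s)\ast_r\rmp(t))$ purely from the definition of $\ast_r$, using that every entry of $\rmp(t)$ already occurs in $\rmp(s)\ast_r\rmp(t)$, so the outer $\ast_r\,\rmp(t)$ on the left contributes no new entries and leaves the common suffix $\rmp(t)$ in place — i.e.\ $\ast_r$ is "idempotent in its right argument against any sequence containing that argument as a subsequence." That observation, once stated precisely, closes the proof.
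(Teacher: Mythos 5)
Your overall strategy---unwinding the recursive product algorithm of Lemma \ref{lem:sastt} by induction on $h(s)$, then absorbing $\rmp(t)$ for the last claim---is exactly the paper's (its proof is literally ``by inspection of the algorithm in Lemma \ref{lem:sastt}''). However, as written your central bookkeeping step is false, because you reverse the orientation of the rightmost path. By Definition \ref{defn:leftmost}, $\rmp(s) = (a^s_1, a^s_{11}, \dotsc, a^s_{1^{h(s)}})$, so $a^s_1$ is the \emph{first} entry (as your own parenthetical says) and $\rmp(s_1) = (a_2,\dotsc,a_k)$, not $(a_1,\dotsc,a_{k-1})$; correspondingly, in the case $a^s_1 \notin \al(t)$ the dual clause of Lemma \ref{lem:sastt} gives $\rmp(st) = (a^s_1)$ followed by $\rmp(s_1 t)$, i.e.\ $a^s_1$ is \emph{prepended}, not ``appended''. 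Your middle paragraph instead sets $a^s_1 = a_k$ and claims that appending $a^s_1$ to $\rmp(s_1)\ast_r\rmp(t)$ reproduces $\rmp(s)\ast_r\rmp(t)$; this fails concretely: for $s = \tr(c)$, $t = \tr(ab)$ your recipe yields $(a,b,c)$, whereas $\rmp(st) = \rmp(\tr(cab)) = (c,a,b) = \rmp(s)\ast_r\rmp(t)$. The repair is straightforward but must be made: since the entries of $\rmp(s)$ are distinct, the block prepended by \eqref{eq:rmpast} consists of exactly those entries of $\rmp(s)$ not lying in $\al(t)$, listed in \emph{increasing} index order (the indices $i_j$ are chosen decreasing but written out reversed), so $(a_1,\dotsc,a_k)\ast_r\rmp(t)$ equals $(a_1)$ prepended to $(a_2,\dotsc,a_k)\ast_r\rmp(t)$ when $a_1 \notin \al(t)$, and equals $(a_2,\dotsc,a_k)\ast_r\rmp(t)$ otherwise---which is precisely what the algorithm's right-hand recursion produces.

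For $\rmp(tst) = \rmp(st)$, the detour through ``$st = tst$ in $T_n$'' (and $tst \sim st$ in $M_n$) should be deleted: it is false, since for instance $\tr(aba) \neq \tr(ba)$ and $aba \not\sim ba$ in $M_2$, and the fragment involving $t^{-1}$ has no meaning in a monoid. You do end with the correct argument, namely $\rmp(tst) = \rmp(t)\ast_r\rmp(st)$ by the first part, and since every entry of $\rmp(t)$ already occurs in $\rmp(st)$, nothing is prepended, so this equals $\rmp(st)$. Keep only that; with the orientation fixed as above, the proof is complete and coincides with the paper's intended argument.
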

\begin{proof}
By inspection of the algorithm in Lemma \ref{lem:sastt} for computing $st$.
\end{proof}

\begin{prop}
\label{prop:repletepaths}
The minimal inhabited replete right branch sets are the sets of branches with a given rightmost path. That is, if $S \subseteq T_n$ is a uniform replete subsemigroup, $(a,t) \in \rb(S)$ and $\rmp(a',t') = \rmp(a,t)$ then $(a',t') \in \rb(S)$, and conversely the set $\{(a',t') \mid \rmp(a',t') = \rmp(a,t)\}$ is a replete right branch set. The dual is true for left branches.
\end{prop}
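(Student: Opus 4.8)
The plan is to prove both halves of the statement using the interaction between rightmost paths and products recorded in Lemma \ref{lem:astseq}, together with the characterization of uniform subsemigroups by their branch sets from Theorem \ref{thm:uniformcount}. Throughout, fix a common alphabet $A$ and work with right branches of trees $t$ with $\al(t) = A$; the left-branch statement is dual. The key observation driving everything is that by Lemma \ref{lem:astseq}, $\rmp(st) = \rmp(s) \ast_r \rmp(t)$ and $\rmp(tst) = \rmp(st)$, so the rightmost path of a product is determined by the rightmost paths of the factors, and in particular $\rmp$ is constant on each uniform replete subsemigroup's "layer" under the repleteness operations.

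For the first (closure) direction, suppose $S$ is a uniform replete subsemigroup, $(a,t) \in \rb(S)$, and $(a',t')$ is another right branch over $A$ with $\rmp(a',t') = \rmp(a,t)$. I would produce an element of $S$ realizing $(a',t')$ by exhibiting $(a',t')$ as the right branch of a product (or internal factor) of trees already known to lie in $S$. Concretely: pick any tree $r \in S$ with right branch $(a,t)$; I want to modify its right branch to $(a',t')$ while keeping it in $S$ via repleteness. Since $\rmp$ is unchanged, one can reach $(a',t')$ from $(a,t)$ by a sequence of "local" moves on the right branch, each of which corresponds to multiplying on the right (or inserting internally) by a suitable tree whose presence in $S$ is forced — either because it has smaller height and its branches are already present, or because repleteness of $x \backslash S / y$ forces the relevant internal product to be in $S$ once two of its "neighbours" are. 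The technical content here is to check that the set of right branches with a fixed rightmost path is exactly the orbit of $(a,t)$ under these forced moves; this is where I expect the bulk of the work, and it will likely require an induction on height $|A|$ using the recursive structure $(a^t_1, t_1)$ of a right branch, peeling off $a^t_1$ (which is the first entry of $\rmp$, hence fixed) and recursing into $t_1$.

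For the converse, I must show that $B := \{(a',t') \mid \rmp(a',t') = \rmp(a,t)\}$, together with any replete left-branch set over $A$, is a replete right-branch set — i.e. the uniform subsemigroup $S$ it determines (via Theorem \ref{thm:uniformcount}) is closed and replete. Closure under products is immediate from Lemma \ref{lem:astseq}: if $s, t \in S$ then $\rmp(st) = \rmp(s) \ast_r \rmp(t) = \rmp(s)$ since the two factors share a rightmost path and $\ast_r$ with an equal second argument is the identity (unpacking \eqref{eq:rmpast}: $n = 0$). For repleteness, using Lemma \ref{lem:uniformrepleteness}'s criterion (repleteness depends only on branches) I would show that for any $x,y$, if $xuy$ and $xvy$ both have right branch in $B$, then so does $xuvy$; by Lemma \ref{lem:astseq} the rightmost path of $xuvy$ is obtained from those of $x, u, v, y$, and since $\rmp(xuy) = \rmp(xvy)$, the paths of $u$ and $v$ must agree on the portion that survives in the product, forcing $\rmp(xuvy) = \rmp(xuy)$. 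Packaging this cleanly — making precise "the portion that survives" and verifying it is insensitive to interchanging $u$ and $v$ — is the main obstacle on this side; I expect it to follow from a careful reading of the definition \eqref{eq:rmpast} of $\ast_r$, noting that $\ast_r$ is associative-like enough that $\rmp(x) \ast_r \rmp(u) \ast_r \rmp(v) \ast_r \rmp(y)$ collapses the same way regardless of the order of the middle two once they have a common rightmost path.

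Finally, I would remark that minimality is built in: any inhabited replete right-branch set containing $(a,t)$ must contain all of $B$ by the closure direction, so $B$ is the smallest such, and these are precisely the minimal inhabited replete right-branch sets since every replete right-branch set is a union of them (each of its elements generates its own $B$, and these $B$'s are replete and contained in it). The dual statements for left branches follow by the left–right symmetry noted before Definition \ref{defn:leftmost}.
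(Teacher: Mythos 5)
Your overall strategy is the same as the paper's (use repleteness moves to realise new right branches, and use Lemma \ref{lem:astseq} for the converse), but at both points where the actual content of the proof lies you defer the work rather than doing it, and neither step is routine. For the closure direction you never specify the ``local moves'' nor why their orbit exhausts the rightmost-path class; this is exactly the step that needs an explicit construction. The paper supplies one: if $s \in S$ and $\rmp(t)=\rmp(s)=(a_1,\dotsc,a_k)$, then $s = s(a_1)s_1 = st_1s_1$ (both $(a_1)s_1$ and $t_1s_1$ are absorbed on the right because $a_1\notin\al(t_1)=\al(s_1)$), so repleteness applied to $s\backslash S/s_1$ with $u=(a_1)$, $v=t_1$ forces $s(a_1)t_1s_1\in S$, a tree whose right branch is $(a_1,t_1s_1)$; iterating inside $t_1s_1$ with $a_2$, $t_{11}$, $s_{11}$, etc., splices the whole of $t$'s right branch into an element of $S$. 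Without exhibiting such factorizations (or equivalent forced moves), the claim that every branch with the given rightmost path is reachable remains an assertion; note also that you do not need ``orbit equals class'', only that every member of the class is reachable, which is precisely what the explicit construction delivers.

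On the converse side you again flag the key point (``the portion that survives'') as the main obstacle and leave it open. The clean way to close it, and what the paper does, is the identity $\rmp(xuvy)=\rmp(xuy)\ast_r\rmp(vy)$: unwinding \eqref{eq:rmpast} for sequences of distinct generators shows $p\ast_r q$ is just the subsequence of $p$ on generators outside $q$ followed by $q$, so since $\rmp(vy)$ is a final subsequence of $\rmp(xvy)$ and $\rmp(xuy)=\rmp(xvy)$, one gets $\rmp(xuvy)=\rmp(xvy)$ in one line. Separately, your opening remark that $\rmp$ ``is constant on each uniform replete subsemigroup's layer'' is misleading as stated: a replete right branch set is in general a union of several rightmost-path classes (cf.\ Table \ref{table:hex1}), so constancy is exactly the minimality phenomenon the proposition is establishing, not something to assume. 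Your closing minimality argument is fine once the two halves are actually proved.
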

\begin{proof}
Suppose $S$ is a replete uniform subsemigroup. Let $s \in S$ and suppose $\rmp(t) = \rmp(s) = (a_1,\dotsc,a_k)$. Then $s = s(a_1)s_1 = st_1s_1$, whence $s(a_1)t_1s_1 \in S$ by repleteness, and this has right branch $(a_1,t_1s_1)$. Similarly, $t_1s_1 = t_1s_1(a_2)s_{11} = t_1s_1t_{11}s_{11}$, whence after multiplying on the left we conclude that $s(a_1)t_1s_1(a_2)t_{11}s_{11} \in S$, which has right branch $(a_1,(t_{10},a^t_{10},a_2,t_11s_11))$. Note that these computations rely on the fact that $a_1 \notin t_1$ and $a_2 \notin t_{11}$. Iterating the argument, we arrive at a tree in $S$ with the same right branch as $t$, as claimed.

Conversely, if $\rmp(xuy) = \rmp(xvy)$ then by the observations in Lemma \ref{lem:astseq} we have $\rmp(xuvy) = \rmp(xuy) \ast_r \rmp(vy) = \rmp(xvy)$, whence $\{(a',t') \mid \rmp(a',t') = \rmp(a,t)\}$ is a replete set of right branches, as claimed.
\end{proof}

From here on, we denote by 
$[a_1,\dotsc,a_k \rangle$ 
the set of right 
branches with rightmost path $(a_1,\dotsc,a_k)$. 

\begin{lem}
\label{lem:repletebranchset}
The set of right branches $[a_1,\dotsc,a_k \rangle$ has cardinality
\[\prod_{j=1}^{k-1} j^{2^{k-j}-1}.\]
\end{lem}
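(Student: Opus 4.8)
The plan is to recognise $[a_1,\dotsc,a_k\rangle$ as a relabelling of the set of trees of height $k-1$ carrying a prescribed rightmost path, and then count those by induction on height, reusing the same recursive decomposition that produced the constants $c_k$ of \eqref{eq:ck}.

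First I would unwind the definitions. A right branch is a pair $(a,t)$ with $a \in [n]$, $t \in T_n$ and $a \notin \al(t)$ (any such pair occurs, e.g. as the right branch of $(t,a,a,t) \in T_n$), and by Definition \ref{defn:leftmost} its rightmost path is $(a)$ followed by $\rmp(t)$. Hence $(a_1,t) \in [a_1,\dotsc,a_k\rangle$ forces $\rmp(t) = (a_2,\dotsc,a_k)$, which in turn forces $\al(t) = \{a_2,\dotsc,a_k\}$ and $h(t) = k-1$; conversely any such $t$ yields a branch in the set. So $(a_1,t) \mapsto t$ is a bijection from $[a_1,\dotsc,a_k\rangle$ onto $\{t \in T_n \mid \rmp(t) = (a_2,\dotsc,a_k)\}$. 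Writing $d_m$ for the number of trees of height $m$ with a given fixed rightmost path (independent of the path, by renaming generators), we obtain $|[a_1,\dotsc,a_k\rangle| = d_{k-1}$.

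Next I would set up a recursion for $d_m$. Given distinct $b_1,\dotsc,b_m$, a tree $t = (t_0,a^t_0,a^t_1,t_1)$ with $\rmp(t) = (b_1,\dotsc,b_m)$ must have $a^t_1 = b_1$ and $\rmp(t_1) = (b_2,\dotsc,b_m)$, so $t_1$ ranges over $d_{m-1}$ trees; the tree constructor then imposes $\al(t_0) = \{b_1,\dotsc,b_m\}\setminus\{a^t_0\}$ with $a^t_0$ free among the $m$ generators, and places no further constraint on $t_0$ beyond its alphabet, so $t_0$ ranges over $c_{m-1}$ trees. These choices are independent, giving $d_m = m\, c_{m-1}\, d_{m-1}$ for $m \geq 1$, with $d_0 = 1$, hence $d_m = \prod_{l=1}^m l\, c_{l-1}$.

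Finally I would reduce this product to the claimed closed form. Using \eqref{eq:ck}, $l\, c_{l-1} = \prod_{i=0}^{l-1}(l-i)^{2^i} = \prod_{j=1}^{l} j^{2^{l-j}}$ after the substitution $j = l-i$, so $d_m = \prod_{l=1}^m\prod_{j=1}^l j^{2^{l-j}}$; swapping the order of multiplication, the exponent of $j$ is $\sum_{l=j}^m 2^{l-j} = 2^{m-j+1}-1$, so $d_m = \prod_{j=1}^m j^{2^{m-j+1}-1}$, and setting $m = k-1$ gives $|[a_1,\dotsc,a_k\rangle| = \prod_{j=1}^{k-1} j^{2^{k-j}-1}$ as claimed. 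There is no real conceptual obstacle; the one point needing care is verifying that the left subtree $t_0$ is genuinely unconstrained, i.e. that $[a_1,\dotsc,a_k\rangle$ pins down the rightmost path and the alphabet but nothing more, together with the routine bookkeeping in the final reindexing.
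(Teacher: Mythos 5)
Your proof is correct, but it follows the route the paper only gestures at (``a direct counting argument'') rather than the one it actually writes out. The paper's argument is a one-liner given \eqref{eq:kck}: relabelling generators shows all $k!$ rightmost-path classes of right branches on a fixed $k$-element alphabet are equinumerous, so one divides the total count $k c_{k-1}=\prod_{i=0}^{k-1}(k-i)^{2^i}$ by $k!$ and reindexes via $j=k-i$ to land on $\prod_{j=1}^{k-1}j^{2^{k-j}-1}$ (the factor $j=k$ having exponent $0$). You instead identify $[a_1,\dotsc,a_k\rangle$ with the trees of height $k-1$ having prescribed rightmost path, and set up the recursion $d_m = m\,c_{m-1}\,d_{m-1}$ straight from the tree constructor: the right subtree is pinned down up to a smaller instance of the same problem, while $a^t_0$ and the left subtree are free apart from the alphabet constraint. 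Your key observation --- that the class constrains only the rightmost path and the alphabet, leaving $t_0$ completely free --- is exactly the point that makes the count work, and your reindexing and telescoping of exponents ($\sum_{l=j}^{m}2^{l-j}=2^{m-j+1}-1$) are correct. What each approach buys: the paper's symmetry-plus-division argument is shorter and reuses \eqref{eq:kck} wholesale, but leaves the equinumerosity of classes as an (easy) unstated orbit argument; yours is longer but self-contained, makes the recursive structure of branch classes explicit, and in passing reconstructs the product formula rather than quoting it, which is arguably more illuminating for the later manipulations of rightmost paths.
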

\begin{proof}
This can be obtained by a direct counting argument, or by observing that all such branch sets must have the same cardinality and there are exactly $k!$ of them, so we can simply divide the earlier formula \eqref{eq:kck} by $k!$ after re-indexing via $j:= k-i$.
\end{proof}

\begin{thm}
\label{thm:repletechar}
A set $\rb(S)$ of right branches of height $k$ is replete if and only if it is a union of rightmost path equivalence classes and for $s,t \in S$ with $\rmp(s) = (a_1,\dotsc,a_k)$ and $\rmp(t) = (a'_1,\dotsc,a'_k)$ and each index $1 \leq j \leq k$ the branch equivalence class of the right-most path $(a_1,\dotsc,a_k) \ast_r (a'_j,\dotsc,a'_k)$ is contained in $\rb(S)$.
\end{thm}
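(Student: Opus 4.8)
The plan is to reformulate repleteness in terms of rightmost paths, where the operation $\ast_r$ is transparent, and to isolate the identity
\[\rmp(xuvy)=\rmp(xuy)\ast_r\bigl(\rmp(v)\ast_r\rmp(y)\bigr)\]
as the engine of the proof.

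First I would set up the dictionary. Fix the common alphabet $A$, $|A|=k$. By Proposition \ref{prop:repletepaths} a replete set $\rb(S)$ of right branches is automatically a union of rightmost-path equivalence classes, hence is determined by the set $\Pcal$ of total orders on $A$ arising as $\rmp(s)$ for $s\in\rb(S)$; for any tree $w$ of height $k$, the right branch of $w$ lies in $\rb(S)$ iff $\rmp(w)\in\Pcal$. By Lemma \ref{lem:uniformrepleteness} and Theorem \ref{thm:uniformcount}, $\rb(S)$ is replete iff: for all $x,u,v,y\in T_n$ with $\al(x)\cup\al(u)\cup\al(y)=A=\al(x)\cup\al(v)\cup\al(y)$, having $\rmp(xuy),\rmp(xvy)\in\Pcal$ forces $\rmp(xuvy)\in\Pcal$. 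Now Lemma \ref{lem:astseq} gives $\rmp(st)=\rmp(s)\ast_r\rmp(t)$ and $\rmp(tst)=\rmp(st)$; read through the former, the latter says $\eta\ast_r\nu\ast_r\eta=\nu\ast_r\eta$ for rightmost paths $\eta,\nu$. Inserting $\ast_r\rmp(y)$ in the middle and regrouping (using associativity of $\ast_r$, which follows from associativity of tree products), one gets $\rmp(xuvy)=\rmp(x)\ast_r\rmp(u)\ast_r\rmp(v)\ast_r\rmp(y)=\bigl(\rmp(x)\ast_r\rmp(u)\ast_r\rmp(y)\bigr)\ast_r\bigl(\rmp(v)\ast_r\rmp(y)\bigr)$, i.e.\ the displayed identity; and by Lemma \ref{lem:astseq} again, $\rmp(v)\ast_r\rmp(y)=\rmp(vy)$ is a final subsequence of $\rmp(xvy)$, so it is $(b_{j'},\dots,b_k)$ for a suffix index $j'$ of $\rmp(xvy)=(b_1,\dots,b_k)$.

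For the `if' direction this is now immediate. Assuming $\rb(S)$ is a union of rightmost-path classes (so the dictionary applies) and that $\Pcal$ is closed under $(P,Q)\mapsto P\ast_r(a'_j,\dots,a'_k)$ for every suffix index $j$ — which is precisely the stated closure condition — the identity exhibits $\rmp(xuvy)$ as the image of the pair $\rmp(xuy),\rmp(xvy)\in\Pcal$ under one instance of that operation, hence $\rmp(xuvy)\in\Pcal$; thus the right branch of $xuvy$ lies in $\rb(S)$ and $\rb(S)$ is replete.

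For the `only if' direction, condition (1) is Proposition \ref{prop:repletepaths}. For condition (2), fix $s,t\in S$ with $\rmp(s)=(a_1,\dots,a_k)$, $\rmp(t)=(a'_1,\dots,a'_k)$ and an index $j$; since $\rb(S)$ is a union of classes it suffices to exhibit a single tree of $S$ with rightmost path $(a_1,\dots,a_k)\ast_r(a'_j,\dots,a'_k)$. This path equals $\rmp\bigl(s\ast t_{1^{\,j-1}}\bigr)$, and $t_{1^{\,j-1}}$ is a right factor of $t$, so that tree has the left branch of $s$; but it need not itself lie in $S$, so one must build a witness. The intended route is an iterated application of repleteness in the style of the proof of Proposition \ref{prop:repletepaths}: starting from $t\in S$, one migrates the generators $a'_1,\dots,a'_{j-1}$ one at a time into the order in which they appear in $\rmp(s)$, at each step invoking repleteness for a suitable pair of elements of $S$ — one manufactured from the tree produced so far, one from $s$ — sharing a common right factor $t_{1^{\,i}}$, so that the internal-factor closure effects a single adjacent transposition of the rightmost path. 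Making this migration precise — choosing the two factorizations at each step and verifying that the intervening internal factors are again legitimate trees of $T_n$ — is the technical heart of the argument and the step I expect to be the main obstacle; once it is in place, condition (2) follows and the theorem is proved.
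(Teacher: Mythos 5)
Your `if' direction is correct and coincides with the paper's sufficiency argument: the identity $\rmp(xuvy)=\rmp(xuy)\ast_r\rmp(vy)$, together with the fact that $\rmp(vy)$ is a final subsequence of $\rmp(xvy)$ (Lemma \ref{lem:astseq}), exhibits $\rmp(xuvy)$ as one of the hypothesised products, so its whole path class lies in $\rb(S)$. The genuine gap is in the necessity of the closure condition. You rightly observe that the target path equals $\rmp(s\ast t_{1^{j-1}})$ and that this tree need not lie in $S$, but the replacement you propose --- migrating $a'_1,\dotsc,a'_{j-1}$ into $\rmp(s)$-order one adjacent transposition at a time, each transposition supposedly realised by one application of repleteness to a pair of elements of $S$ sharing a right factor --- is never actually constructed: you do not say which two elements of $S$ and which factorization $x\backslash S/y$ effect a given transposition, nor why the intermediate path classes are already available in $\rb(S)$ so that repleteness can be invoked at the next step, and you flag this yourself as the main obstacle. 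As written, the `only if' half of the theorem is not proved.

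The iteration is also unnecessary: a single application of repleteness suffices, and this is how the paper closes the necessity direction. Choose trees $x$, $u$, $v$ with $\rmp(x)=(a'_j,\dotsc,a'_k,a'_1,\dotsc,a'_{j-1})$ (a cyclic rotation of $\rmp(t)$), $\rmp(u)=(a_1,\dotsc,a_k)$ and $\rmp(v)=(a'_j,\dotsc,a'_k)$, taking the left branch of $x$ in $\ellb(S)$. Since $\al(u),\al(v)\subseteq\al(x)$, the products $xu$ and $xv$ inherit the left branch of $x$ (dual of the computation in Lemma \ref{lem:sastt}), while $\rmp(xu)=\rmp(u)=\rmp(s)$ and $\rmp(xv)=(a'_1,\dotsc,a'_{j-1},a'_j,\dotsc,a'_k)=\rmp(t)$; since $\rb(S)$ is a union of path classes (Proposition \ref{prop:repletepaths}) and $S$ consists of all trees whose branches lie in $\ellb(S)$ and $\rb(S)$ (Theorem \ref{thm:uniformcount}), both $xu$ and $xv$ lie in $S$. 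Repleteness applied to $u,v\in x\backslash S/()$ then gives $xuv\in S$, and $\rmp(xuv)=\rmp(xu)\ast_r\rmp(v)=(a_1,\dotsc,a_k)\ast_r(a'_j,\dotsc,a'_k)$, so the entire class of this path is contained in $\rb(S)$, which is exactly condition (2).
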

\begin{proof}
To demonstrate necessity, let $x$, $u$ and $v$ be trees in $T_n$ such that $\rmp(x) = (a'_j,\dotsc,a'_k,a'_1,\dotsc,a'_{j-1})$, $\rmp(u) = (a_1,\dotsc,a_k)$ and $\rmp(v) = (a'_j,\dotsc,a'_k)$. 
The branch classes determined by $\rmp(xu),\rmp(xv)$ are contained in $\rb(S)$ by assumption, whence the class of $\rmp(xuv)$ is too; this is the required product.

For sufficiency, we again use the observation from the proof of Proposition \ref{prop:repletepaths} that $\rmp(xuvy) = \rmp(xuy) \ast_r \rmp(vy)$, whence the rightmost path of $xuvy$ is of the hypothesised form.
\end{proof}

Given an alphabet $[n]$, we may derive from \eqref{eq:rmpast} operations ${\ast}^j_r$ (with $1 \leq j \leq n$) on the set of sequences of distinct elements of $[n]$:
\begin{equation}
\label{eq:rmpastj}
(a_1,\dotsc,a_k) \ast^j_r (a'_1,\dotsc,a'_k) := 
\begin{cases}
(a_1,\dotsc,a_k) \ast_r (a'_j,\dotsc,a'_{l}) & 1 \leq j \leq l \\
(a_1,\dotsc,a_k) & \text{ otherwise.}
\end{cases}
\end{equation}
The condition given in Theorem \ref{thm:repletechar} can be presented in terms of the restriction of these operations to the sequences of length $k$. Beyond uniform branch sets, though, we can conclude that replete branch sets for $T_n$ correspond to subalgebras of the resulting structure, and consequently that replete subsemigroups correspond to pairs of such subalgebras having compatible alphabets in the obvious sense.

\begin{remark}
\label{rmk:automaticcases}
The case $j = 1$ in Theorem \ref{thm:repletechar} is automatically satisfied, since
\[(a_1,\dotsc,a_k) \ast^1_r (a'_1,\dotsc,a'_k ) =
(a'_1,\dotsc,a'_k).\]
More subtly, the case $j=2$ is also automatic since $a'_1$ will be the only generator appearing in $(a_1,\dotsc,a_k)$ but not $(a'_2,\dotsc,a'_k)$, ensuring
\[(a_1,\dotsc,a_k) \ast^2_r (a'_1,\dotsc,a'_k) =
(a'_1,\dotsc,a'_k).\]
This provides an alternative proof of Lemma \ref{lem:T2replete}, and shows that for $n=3$ we need only consider the operation $\ast^3_r$.
\end{remark}

We can already use this to deduce the number of replete subsemigroups of height at most $2$ in $T_n$.

\begin{lem}
\label{lem:height2count}
For any $n$, $T_n$ has exactly
\begin{equation}
2 \left(\binom{n}{0} + \binom{n}{1} + 18\binom{n}{2} \right) = 18n^2-16n+2
\end{equation}
subsemigroups of height at most $2$, all of which are replete. In particular, $T_3$ has $116$ replete subsemigroups of height at most $2$.
\end{lem}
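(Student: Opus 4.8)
The plan is to assemble the count by stratifying subsemigroups of $T_n$ of height at most $2$ according to their alphabets $\bar{\al}(S)$, exactly as was done for $T_2$ in Example \ref{exa:T2subsemi}, and to invoke Lemma \ref{lem:T2replete} together with Lemma \ref{lem:uniformrepleteness} to dispatch repleteness essentially for free. First I would note that a subsemigroup of height at most $2$ has all its trees with alphabet of size $0$, $1$ or $2$, and since $\bar{\al}(S)$ is closed under unions and the trivial tree is an independent factor of $2$, it is enough to count those $S$ not containing $()$ that are supported on a fixed alphabet $A$ with $|A| \leq 2$, then multiply by $2$. For $|A| = 0$ or $|A| = 1$ the count is $1$ in each case (the data is a single uniform subsemigroup on $\emptyset$ or on a single generator, of which there is exactly one inhabited instance each once we exclude $()$ --- for one generator, $S = \{(a)\}$). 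So the work is entirely in the $|A| = 2$ stratum.

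For a fixed two-element alphabet, say $\{a,b\}$, I would reuse the analysis behind Table \ref{table:T2count}: the possible values of $\bar{\al}(S)$ (within subsets of $\{a,b\}$, excluding $\emptyset$ as the alphabet of a present tree only when forced) are $\{\{a,b\}\}$, $\{\{a\},\{a,b\}\}$, $\{\{b\},\{a,b\}\}$, and $\{\{a\},\{b\},\{a,b\}\}$, contributing respectively $9$, $4$, $4$ and $1$ subsemigroups by the same reasoning given in Example \ref{exa:T2subsemi} (the two `$\{\{a\},\{a,b\}\}$-type' cases being genuinely distinct now that $a$ and $b$ are labelled positions in a larger alphabet, hence the factor $2$ there rather than a single representative). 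This gives $9 + 4 + 4 + 1 = 18$ inhabited subsemigroups on each $2$-element alphabet, matching the coefficient $18$ of $\binom{n}{2}$. Summing over alphabets, the number of inhabited height-$\leq 2$ subsemigroups not containing $()$ is $\binom{n}{0} + \binom{n}{1} + 18\binom{n}{2}$, and doubling to account for the trivial tree gives the displayed expression; expanding $2\binom{n}{1} = 2n$ and $2 \cdot 18 \binom{n}{2} = 18n(n-1) = 18n^2 - 18n$ and $2\binom{n}{0} = 2$ yields $18n^2 - 16n + 2$. Setting $n = 3$ gives $162 - 48 + 2 = 116$.

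For repleteness: by Lemma \ref{lem:uniformrepleteness} a subsemigroup is replete iff each of its uniform components $S_A$ is, and by the remark following that lemma this depends only on the left and right branch sets. All uniform components here have height $\leq 2$. Height-$0$ and height-$1$ components are trivially replete, and for height $2$ the branch sets live over a $2$-element alphabet, where by Remark \ref{rmk:automaticcases} (the $j=1$ and $j=2$ cases of Theorem \ref{thm:repletechar} being automatic, with no $j \geq 3$ to check) every set of branches that is a union of rightmost-path classes is replete --- and over a $2$-element alphabet there are only two rightmost paths, $(a,b)$ and $(b,a)$, each its own singleton class, so \emph{every} branch set is a union of such classes. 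Hence every height-$\leq 2$ subsemigroup is automatically replete. The only mild obstacle is bookkeeping care in the $|A|=2$ stratum --- making sure the enumeration of the five shapes of $\bar{\al}(S)$ and their counts is complete and that the symmetry factors are applied correctly --- but this is precisely the computation already carried out in Example \ref{exa:T2subsemi}, so it can be cited rather than redone.
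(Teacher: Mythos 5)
Your proposal is correct and follows essentially the same route as the paper: stratify by alphabets, reuse the $T_2$ count from Example \ref{exa:T2subsemi} with the symmetry factors adjusted (giving $9+4+4+1=18$ per two-element alphabet), and double for the trivial tree. Your repleteness argument simply makes explicit what the paper compresses into ``by extension of Lemma \ref{lem:T2replete}'', namely the reduction to uniform components via Lemma \ref{lem:uniformrepleteness} and the automatic cases of Theorem \ref{thm:repletechar} noted in Remark \ref{rmk:automaticcases}.
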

\begin{proof}
By extension of Lemma \ref{lem:T2replete}, all subsemigroups containing at most two generators are replete, and we can count these either by inclusion-exclusion or by extending Table \ref{table:T2count}, adjusting the symmetry factor appropriately; this yields Table \ref{table:T3sub2count}. Summing up and doubling to account for the identity, we obtain the stated formula.
\begin{table}[ht]
\centering
\begin{tabular}{ l |c|c|c|c|c|c|}
$\bar{\al}(S)$ & $\emptyset$ & $\{\{a\}\}$ & $\{\{a,b\}\}$ & $\{\{a\},\{a,b\}\}$ & $\{\{a\},\{b\},\{a,b\}\}$ \\
\hline
Factor     & $\binom{n}{0}$ & $\binom{n}{1}$ & $\binom{n}{2}$ & $2\binom{n}{2}$ & $\binom{n}{2}$ \\
Count      & $1$ & $1$ & $9$ & $4$ & $1$
\end{tabular}
\caption{An accounting of the replete subsemigroups $S$ of $T_n$ of height at most $2$ which do not contain the identity, arranged by alphabets.}
\label{table:T3sub2count}
\end{table}
\end{proof}

Remark \ref{rmk:automaticcases} also enables us to identify all of the replete branch sets of height $3$. To begin with, there are $6$ minimal replete branch sets corresponding to the possible rightmost paths. By careful inspection of the operation ${\ast}^3_r$, any two rightmosts paths sharing their first or last generator form a replete subsemigroup, while any other pair generates one or two further rightmost paths. Finally, any possibility not accounted for by these must feature rightmost paths ending in each of the three possible generators, and hence must include all possible right branches. These possibilities are illustrated in Table \ref{table:hex1}.

For non-uniform subsemigroups of height $3$, we can deduce by considering ${\ast}_r$ how the presence of branches of smaller heights affect the height $3$ branches which appear: a tree of height $1$ forces (at least) one of two possible branch sets to appear, while a tree of height $2$ uniquely determines a branch set of height $3$, starting with the easiest case of subsemigroups of height at most $2$. Inspecting Table \ref{table:hex1}, we thus see that there are:
\begin{itemize}
	\item $12$ sets containing $[a,c,b \rangle$ or $[c,a,b \rangle$, which are closed under $-\ast_r [b\rangle$;
	\item $9$ sets containing $[a,c,b \rangle$, closed under $- \ast_r [a,b \rangle$;
	\item $6$ sets containing $[a,c,b \rangle$ and $[c,a,b \rangle$, closed under $- \ast_r [a,b \rangle$ and $- \ast_r [c,b \rangle$;
	\item $5$ sets containing $[a,b,c \rangle$ and $[a,c,b \rangle$, closed under $- \ast_r [b,c \rangle$ and $- \ast_r [c,b \rangle$;
	\item $3$ sets containing $[a,b,c \rangle$ and $[c,a,b \rangle$, closed under $- \ast_r [b,c \rangle$ and $- \ast_r [a,b \rangle$;
	\item $2$ sets containing $[a,b,c \rangle$ and $[c,b,a \rangle$, closed under $- \ast_r [b,c \rangle$ and $- \ast_r [b,a \rangle$.
\end{itemize}
These are the values appearing in the `$3$-branch sets' rows of Table \ref{table:T3count}.

To complete the count, recall from Theorem \ref{thm:uniformcount} that for each $A \in \bar{\al}(S)$ the leftmost and rightmost paths present in $S_A$ can be chosen independently, so we need only count the right branches for each possible alphabet and square the results before multiplying by the permutation factor, as we did for Table \ref{table:T3sub2count}. After doubling for the identity and including the trees of smaller height, we obtain the grand total.
\begin{prop}
\label{prop:T3subcount}
$T_3$ has $18,030$ replete subsemigroups.
\end{prop}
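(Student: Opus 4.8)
The plan is to carry out a bookkeeping computation organized by the alphabets $\bar{\al}(S) \subseteq \Pcal([3])$ of a replete subsemigroup $S \subseteq T_3$, exactly as was done for $T_2$ in Example~\ref{exa:T2subsemi} and for the height-$\leq 2$ part in Lemma~\ref{lem:height2count}. Since $\bar{\al}(S)$ must be a union-closed subfamily of $\Pcal(\{a,b,c\})$, I would first enumerate these subfamilies up to the action of $S_3$ permuting generators, recording a symmetry factor for each orbit (as in the `factor' rows of Tables~\ref{table:T2count} and~\ref{table:T3sub2count}). The families with $\{a,b,c\} \notin \bar{\al}(S)$ contribute only height-$\leq 2$ subsemigroups and are already counted by Lemma~\ref{lem:height2count} as $116$ (after doubling for the identity); so the real work is the families containing the top element $\{a,b,c\}$.

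For each such family, I would use Theorem~\ref{thm:uniformcount} together with Lemma~\ref{lem:uniformrepleteness} and the remark following it: a replete subsemigroup is determined by choosing, for each $A \in \bar{\al}(S)$, a replete set of left branches and a replete set of right branches of height $|A|$, subject only to the alphabet-compatibility constraints coming from Proposition~\ref{prop:subsemibound} and the requirement that $S$ be closed under products (which, by repleteness reduces to conditions on branch sets across heights). By the left/right symmetry noted after Lemma~\ref{lem:uniformrepleteness}, for a fixed alphabet the left- and right-branch choices are independent, so I count the admissible right-branch configurations for the whole family and square, then multiply by the symmetry factor. For the height-$3$ layer $S_{\{a,b,c\}}$ the admissible right-branch sets are precisely those listed in the six bulleted cases preceding Proposition~\ref{prop:T3subcount} (derived from Table~\ref{table:hex1} and the operation ${\ast}^3_r$, using Remark~\ref{rmk:automaticcases} to reduce to ${\ast}^3_r$); the interaction with smaller-height branches is governed by the displayed closure conditions ($- \ast_r [b\rangle$, $- \ast_r [a,b\rangle$, etc.), which pin down how a height-$1$ or height-$2$ branch forces height-$3$ branches. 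Assembling the product of these counts over all alphabet-families, doubling for the presence or absence of the trivial tree $()$, and summing, I expect to recover the figure $18{,}030$; the height-$\leq 2$ contribution of $116$ should reappear as the sub-sum over families not containing $\{a,b,c\}$, providing a consistency check, and the height-$3$ uniform count (families equal to $\{\{a,b,c\}\}$ alone, or its down-sets) provides another against Table~\ref{table:hex1}.

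The main obstacle is the combinatorial case analysis for families $\bar{\al}(S)$ that contain $\{a,b,c\}$ together with some but not all of the doubletons and singletons: here one must track, for each such configuration, which height-$3$ branch sets are compatible with the chosen lower-height branches via the closure operations $-\ast_r [a,b\rangle$ and $-\ast_r[b\rangle$ and their permutations, and the answer genuinely depends on the configuration (this is the phenomenon flagged in Example~\ref{exa:T3notreplete}, where lower branches fail to generate a replete set). Keeping the $S_3$-symmetry bookkeeping correct through these mixed cases — some families have trivial stabilizer, some are fixed by a transposition, some by all of $S_3$ — is where errors are most likely to creep in, so I would cross-check the final tally by also computing $|\{$replete subsemigroups with $\bar{\al}(S)$ of a given isomorphism type$\}|$ via inclusion–exclusion over the down-sets, mirroring the two methods offered in the proof of Lemma~\ref{lem:height2count}.
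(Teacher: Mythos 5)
Your proposal follows essentially the same route as the paper: decomposing by the union-closed alphabet family $\bar{\al}(S)$ with $S_3$-symmetry factors, reducing to replete right-branch sets via rightmost paths and the operation ${\ast}^3_r$ (Remark \ref{rmk:automaticcases}, Table \ref{table:hex1}), tracking how height-$1$ and height-$2$ branches force height-$3$ branch sets, squaring for the independent left/right choices (Theorem \ref{thm:uniformcount}, Lemma \ref{lem:uniformrepleteness}), and doubling for the trivial tree --- which is exactly how the paper assembles Table \ref{table:T3count}. The only thing your sketch stops short of is actually executing the case-by-case tabulation that produces the figure $18{,}030$, but the method and consistency checks you describe are the paper's own.
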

We will not explicitly go higher than $n=3$ in the present paper, but we can deduce the analogue of Lemma \ref{lem:height2count} without further work by simply summing the entries in the table.

\begin{lem}
\label{lem:height3count}
For any $n$, $T_n$ has exactly:
\begin{equation}
2 \left(\binom{n}{0} + \binom{n}{1} + 18\binom{n}{2} + 8957 \binom{n}{3} \right)
\end{equation}
replete subsemigroups of height at most $3$.
\end{lem}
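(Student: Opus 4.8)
The plan is to reduce the count to a sum over the possible alphabet-families $\bar{\al}(S)$, exactly as in the proof of Lemma \ref{lem:height2count}, and to recognize the resulting sum as the grand total already assembled in Table \ref{table:T3count}.

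First I would observe that a tree of height at most $3$ has alphabet of cardinality at most $3$, so a replete subsemigroup $S \subseteq T_n$ of height at most $3$ has $\bar{\al}(S)$ a union-closed family of subsets of $[n]$ of size at most $3$. Closure under union is restrictive here: if $\bar{\al}(S)$ contains a set $C$ with $|C| = 3$ and any further set $D$, then $C \cup D \in \bar{\al}(S)$ has size at most $3$, forcing $D \subseteq C$; hence any such family either uses only subsets of size at most $2$, or consists entirely of subsets of a single $3$-element set. By Lemma \ref{lem:uniformrepleteness} and Theorem \ref{thm:uniformcount}, $S$ is recovered from $\bar{\al}(S)$ together with the replete left- and right-branch sets of each $S_A$, $A \in \bar{\al}(S)$, subject to the cross-alphabet constraints making $\bigcup_{A} S_A$ a subsemigroup; and, as in the discussion preceding Proposition \ref{prop:T3subcount}, the left- and right-branch data satisfy these constraints independently of one another, so for a fixed $\bar{\al}(S)$ the number of valid configurations is the square of the number of admissible assignments of replete right-branch sets.

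Next I would partition the subsemigroups not containing the trivial tree by the isomorphism type of $\bar{\al}(S)$ — that is, by the union-closed family $P$ of subsets of $[k]$, $k \leq 3$, that it realizes, together with an embedding of $[k]$ into $[n]$. The number of sub-families of $\Pcal([n])$ of a given type is $\binom{n}{k}$ times the number of copies of $P$ inside a fixed $k$-set, hence a non-negative integer combination of $\binom{n}{0}, \dotsc, \binom{n}{3}$; this is the `factor' column. The number of replete subsemigroups realizing a given $P$ is independent of $n$ for $n \geq k$: for $x \backslash S / y$ to be non-empty we need $\al(x), \al(y) \subseteq \bigcup P$ (as $\al(x) \subseteq \al(xty) \in \bar{\al}(S)$), so such an $S$ sits inside the sub-monoid of $T_n$ on $\bigcup P$, a copy of $T_{|\bigcup P|}$, on which repleteness is intrinsic. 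These $n$-independent counts, obtained from the height-$3$ analysis via Remark \ref{rmk:automaticcases} and the operation ${\ast}^3_r$ (six rightmost-path classes, the way pairs of them generate further classes, and the forcing of height-$3$ branch sets by branches of smaller height) together with Lemma \ref{lem:height2count} for the lower strata, are precisely what is recorded in Table \ref{table:T3count}.

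It remains to collect terms and multiply by $2$. The families $P$ using only subsets of size at most $2$ reproduce the coefficients $1, 1, 18$ of $\binom{n}{0}, \binom{n}{1}, \binom{n}{2}$ from Lemma \ref{lem:height2count}; by the first step every other admissible $P$ lies inside a single $3$-set, so contributes only to $\binom{n}{3}$, and summing the corresponding entries of Table \ref{table:T3count} gives the coefficient $8957$ — consistent with Proposition \ref{prop:T3subcount}, since
\[2\left(\binom{3}{0}+\binom{3}{1}+18\binom{3}{2}+8957\binom{3}{3}\right)=2(1+3+54+8957)=18{,}030.\]
The final factor of $2$ accounts for adjoining the trivial tree $()$: using Lemma \ref{lem:xyfactors} one checks that $S$ is a replete subsemigroup if and only if $S \cup \{()\}$ is, since $x \backslash \{()\} / y$ equals $\{()\}$ when $x = y = ()$ and is empty otherwise. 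This yields the stated formula, which moreover specializes correctly at $n = 0,1,2$ (where `height at most $3$' is no restriction) to $2,4,42$, matching Lemma \ref{lem:T2replete}. The only genuine work behind this is the assembly of Table \ref{table:T3count} — the case analysis of admissible right-branch configurations over alphabets inside a fixed $3$-set — which is already carried out in deriving Proposition \ref{prop:T3subcount}; given that table, the present lemma is pure bookkeeping.
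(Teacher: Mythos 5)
Your proposal is correct and follows essentially the same route as the paper: the paper obtains the formula by summing the ``Replete $S$'' entries of Table \ref{table:T3count} (which total $8957$, the replete subsemigroups on a fixed $3$-set containing a height-$3$ tree and not the trivial tree), combining with Lemma \ref{lem:height2count} for the lower strata, and doubling for the identity. You merely make explicit the bookkeeping the paper leaves implicit (union-closure forcing all alphabets into a single $3$-set, the $n$-independence of the per-alphabet counts, and the repleteness-preserving bijection given by adjoining the trivial tree), all of which checks out.
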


\begin{table}[ht]
\centering
\begin{tabular}{c c c}
\begin{tikzpicture}
	\foreach \x/\l/\p in
     { 60/{B}/right,
      120/{A}/left,
      180/{F}/left,
      240/{E}/left,
      300/{D}/right,
      360/{C}/right
     }
     {\node[circle,draw,fill,label={\p:\l},color=blue,opacity=0.5] at (\x:\rad) {};
     \node[inner sep=1pt,circle,draw,fill,color=red] at (\x:\rad) {};
     }
\end{tikzpicture}
&
\begin{tikzpicture}
	\draw[color=green, line width=10pt, draw opacity=0.5, line cap = round] (120:\rad) -- (60:\rad) (240:\rad) -- (180:\rad) (360:\rad) -- (300:\rad);
	\foreach \x/\l/\p in
     { 60/{B}/right,
      120/{A}/left,
      180/{F}/left,
      240/{E}/left,
      300/{D}/right,
      360/{C}/right
     }
     \node[inner sep=1pt,circle,draw,fill,label={\p:\l},color=red] at (\x:\rad) {};
\end{tikzpicture}
&
\begin{tikzpicture}
   \draw[color=green, line width=10pt, draw opacity=0.5, line cap = round] (60:\rad) -- (360:\rad) (300:\rad) -- (240:\rad) (180:\rad) -- (120:\rad);
   \foreach \x/\l/\p in
     { 60/{B}/right,
      120/{A}/left,
      180/{F}/left,
      240/{E}/left,
      300/{D}/right,
      360/{C}/right
     }
     \node[inner sep=1pt,circle,draw,fill,label={\p:\l},color=red] at (\x:\rad) {};
\end{tikzpicture}
\\
\begin{tikzpicture}
   \draw[color=violet, line width=10pt, draw opacity=0.5, rounded corners] (120:\rad) -- (60:\rad) -- (360:\rad) -- cycle (240:\rad) -- (180:\rad) -- (300:\rad) -- cycle;
   \foreach \x/\l/\p in
     { 60/{B}/right,
      120/{A}/left,
      180/{F}/left,
      240/{E}/left,
      300/{D}/right,
      360/{C}/right
     }
     \node[inner sep=1pt,circle,draw,fill,label={\p:\l},color=red] at (\x:\rad) {};
   \node[inner sep = 1pt, circle,draw,fill,color=black] at (60:\rad) {};
   \node[inner sep = 1pt, circle,draw,fill,color=black] at (240:\rad) {};
\end{tikzpicture}
&
\begin{tikzpicture}
   \draw[color=violet, line width=10pt, draw opacity=0.5, rounded corners] (300:\rad) -- (60:\rad) -- (360:\rad) -- cycle (240:\rad) -- (180:\rad) -- (120:\rad) -- cycle;
   \foreach \x/\l/\p in
     { 60/{B}/right,
      120/{A}/left,
      180/{F}/left,
      240/{E}/left,
      300/{D}/right,
      360/{C}/right
     }
     \node[inner sep=1pt,circle,draw,fill,label={\p:\l},color=red] at (\x:\rad) {};
   \node[inner sep = 1pt, circle,draw,fill,color=black] at (360:\rad) {};
   \node[inner sep = 1pt, circle,draw,fill,color=black] at (180:\rad) {};
\end{tikzpicture}
&
\begin{tikzpicture}
   \draw[color=violet, line width=10pt, draw opacity=0.5, rounded corners] (300:\rad) -- (240:\rad) -- (360:\rad) -- cycle (60:\rad) -- (180:\rad) -- (120:\rad) -- cycle;
   \foreach \x/\l/\p in
     { 60/{B}/right,
      120/{A}/left,
      180/{F}/left,
      240/{E}/left,
      300/{D}/right,
      360/{C}/right
     }
     \node[inner sep=1pt,circle,draw,fill,label={\p:\l},color=red] at (\x:\rad) {};
   \node[inner sep = 1pt, circle,draw,fill,color=black] at (300:\rad) {};
   \node[inner sep = 1pt, circle,draw,fill,color=black] at (120:\rad) {};
\end{tikzpicture}
\\
\begin{tikzpicture}
   \draw[color=orange, line width=10pt, draw opacity=0.5, rounded corners] (240:\rad) -- (300:\rad) -- (120:\rad) -- (180:\rad) -- cycle;
   \foreach \x/\l/\p in
     { 60/{B}/right,
      120/{A}/left,
      180/{F}/left,
      240/{E}/left,
      300/{D}/right,
      360/{C}/right
     }
     \node[inner sep=1pt,circle,draw,fill,label={\p:\l},color=black] at (\x:\rad) {};
   \node[inner sep = 1pt, circle,draw,fill,color=red] at (120:\rad) {};
   \node[inner sep = 1pt, circle,draw,fill,color=red] at (300:\rad) {};
\end{tikzpicture}
&
\begin{tikzpicture}
   \draw[color=orange, line width=10pt, draw opacity=0.5, rounded corners] (120:\rad) -- (180:\rad) -- (360:\rad) -- (60:\rad) -- cycle;
   \foreach \x/\l/\p in
     { 60/{B}/right,
      120/{A}/left,
      180/{F}/left,
      240/{E}/left,
      300/{D}/right,
      360/{C}/right
     }
     \node[inner sep=1pt,circle,draw,fill,label={\p:\l},color=black] at (\x:\rad) {};
   \node[inner sep = 1pt, circle,draw,fill,color=red] at (180:\rad) {};
   \node[inner sep = 1pt, circle,draw,fill,color=red] at (360:\rad) {};
\end{tikzpicture}
&
\begin{tikzpicture}
   \draw[color=orange, line width=10pt, draw opacity=0.5, rounded corners] (240:\rad) -- (300:\rad) -- (360:\rad) -- (60:\rad) -- cycle;
   \foreach \x/\l/\p in
     { 60/{B}/right,
      120/{A}/left,
      180/{F}/left,
      240/{E}/left,
      300/{D}/right,
      360/{C}/right
     }
     \node[inner sep=1pt,circle,draw,fill,label={\p:\l},color=black] at (\x:\rad) {};
   \node[inner sep = 1pt, circle,draw,fill,color=red] at (60:\rad) {};
   \node[inner sep = 1pt, circle,draw,fill,color=red] at (240:\rad) {};
\end{tikzpicture}
\\
&
\begin{tikzpicture}
   \draw[color=gray, line width=10pt, draw opacity=0.5, rounded corners] (240:\rad) -- (300:\rad) -- (360:\rad) -- (60:\rad) -- (120:\rad) -- (180:\rad) -- cycle;
   \foreach \x/\l/\p in
     { 60/{B}/right,
      120/{A}/left,
      180/{F}/left,
      240/{E}/left,
      300/{D}/right,
      360/{C}/right
     }
     \node[inner sep=1pt,circle,draw,fill,label={\p:\l},color=black] at (\x:\rad) {};
   \node[inner sep = 1pt, circle,draw,fill,color=red] at (60:\rad) {};
   \node[inner sep = 1pt, circle,draw,fill,color=red] at (240:\rad) {};
   \node[inner sep = 1pt, circle,draw,fill,color=red] at (180:\rad) {};
\end{tikzpicture}
&
\end{tabular}
\caption{A diagrammatic representation of the $22$ (inhabited) replete right branch sets. In each hexagon, nodes A, B, C, D, E, F respectively represent the minimal replete branch sets $[a,b,c\rangle$, $[a,c,b\rangle$, $[c,a,b\rangle$, $[c,b,a\rangle$, $[b,c,a\rangle$, $[b,a,c\rangle$ and each connected shape represents the replete set of branches generated by the minimal sets indicated in red which it contains.}
\label{table:hex1}
\end{table}


\begin{landscape}

\begin{table}[ht]
\centering
\DivWidth{\colOne}{$\{\{a,b\}\}$}{2}
\DivWidth{\colTwo}{$\{\{a\},\{a,b\}\}$}{2}
\begin{tabu}{ l |c|c|C{\colOne}|C{\colOne}|C{\colTwo}|C{\colTwo}|c|C{\colTwo}|C{\colTwo}|}
$\bar{\al}(S) \backslash \{[3]\}$ & $\emptyset$ & $\{\{a\}\}$ & \multicolumn{2}{c|}{$\{\{a,b\}\}$} & \multicolumn{2}{c|}{$\{\{a\},\{a,b\}\}$} & $\{\{a\},\{b\},\{a,b\}\}$ & \multicolumn{2}{c|}{$\{\{a\},\{b,c\}\}$} 
\\
\hline
Factor          & $1$  & $3$  & \multicolumn{2}{c|}{$3$} & \multicolumn{2}{c|}{$6$} & $3$ & \multicolumn{2}{c|}{$3$} \\
\hline
\rowfont{\color{gray}}
$2$-branches    & $0$  & $0$  & $1$ & $2$ & $1$ & $2$ & $2$ & $1$ & $2$ \\
Count           & $1$  & $1$  & $2$ & $1$ & $1$ & $1$ & $1$ & $2$ & $1$ \\ 
$3$-branch sets & $22$ & $12$ & $9$ & $5$ & $9$ & $5$ & $5$ & $3$ & $1$ \\
\hline
Replete $S$     & $484$ & $432$ & \multicolumn{2}{c|}{$1587$} & \multicolumn{2}{c|}{$1176$} & $75$ & \multicolumn{2}{c|}{$147$}
\end{tabu}
\\
\vspace{6pt}
\DivWidth{\colOne}{$\{\{a\},\{b\},\{a,b\},\{b,c\}\}$}{2}
\begin{tabu}{ l |c|c|c|c|c|c|c|c|c|c|c|C{\colOne}|C{\colOne}|}
$\bar{\al}(S) \backslash \{[3]\}$ & \multicolumn{4}{c|}{$\{\{a,b\},\{b,c\}\}$} & \multicolumn{4}{c|}{$\{\{a\},\{a,b\},\{b,c\}\}$} & \multicolumn{3}{c|}{$\{\{b\},\{a,b\},\{b,c\}\}$} & \multicolumn{2}{c|}{$\{\{a\},\{b\},\{a,b\},\{b,c\}\}$} \\
\hline
Factor          & \multicolumn{4}{c|}{$3$} & \multicolumn{4}{c|}{$6$}
                & \multicolumn{3}{c|}{$3$} & \multicolumn{2}{c|}{$6$} \\
\hline
\rowfont{\color{gray}}
$2$-branches    & $[a,b\rangle$   & $[b,a\rangle,2$ & $[b,a\rangle$ & $2$
                & $[b,a\rangle,2$ & $[b,a\rangle$   & $[b,a\rangle$ & $2$             
                & $[a,b\rangle$   & $[a,b\rangle$   & $2$           & $2$             & $2$ \\
\rowfont{\color{gray}}
                & $[c,b\rangle$   & $[c,b\rangle$   & $[b,c\rangle$ & $[b,c\rangle,2$ 
                & $[c,b\rangle$   & $[b,c\rangle$   & $2$           & $[b,c\rangle,2$
                & $[c,b\rangle$   & $2$             & $2$           & $[c,b\rangle$   & $2$ \\
Count           & $1$             & $4$             & $1$           & $3$ 
                & $2$             & $1$             & $1$           & $2$ 
                & $1$             & $2$             & $1$           & $1$             & $1$ \\
$3$-branch sets & $6$             & $3$             & $2$           & $1$
                & $3$             & $2$             & $1$           & $1$ 
                & $6$             & $3$             & $1$           & $3$             & $1$ \\
\hline
Replete $S$ & \multicolumn{4}{c|}{$1587$} & \multicolumn{4}{c|}{$726$}
                & \multicolumn{3}{c|}{$507$} & \multicolumn{2}{c|}{$96$}
\end{tabu}
\\
\vspace{6pt}
\DivWidth{\colOne}{$\{\{a\},\{b\},\{a,b\},\{b,c\},\{a,c\}\}$}{3}
\begin{tabu}{ l |c|c|c|c|c|c|c|c|c|C{\colOne}|C{\colOne}|C{\colOne}|c|}
$\bar{\al}(S) \backslash \{[3]\}$ & \multicolumn{4}{c|}{$\{\{a,b\},\{b,c\},\{a,c\}\}$} & \multicolumn{5}{c|}{$\{\{a\},\{a,b\},\{b,c\},\{a,c\}\}$} & \multicolumn{3}{c|}{$\{\{a\},\{b\},\{a,b\},\{b,c\},\{a,c\}\}$} & all \\
\hline
Factor          & \multicolumn{4}{c|}{$1$} & \multicolumn{5}{c|}{$3$} & \multicolumn{3}{c|}{$3$} & $1$ \\
\hline
\rowfont{\color{gray}}
$2$-branches    & $[a,b\rangle$ & $[b,a\rangle$   & $[b,a\rangle$ & $2$ 
                & $[b,a\rangle$ & $[b,a\rangle$   & $2$           & $2$             & $2$ 
                & $2$           & $2$             & $2$           & $2$ \\
\rowfont{\color{gray}}
                & $1,2$         & $[c,b\rangle,2$ & $2$           & $1,2$
                & $1$           & $2$             & $[c,b\rangle$ & $[b,c\rangle,2$ & $1,2$
                & $[c,b\rangle$ & $2$             & $[c,b\rangle$ & $2$ \\
\rowfont{\color{gray}}
                & $[a,c\rangle$ & $[a,c\rangle$   & $[c,a\rangle$ & $2$
                & $[c,a\rangle$ & $[c,a\rangle$   & $[c,a\rangle$ & $[c,a\rangle$   & $2$
                & $[c,a\rangle$ & $[c,a\rangle,2$ & $2$           & $2$   \\
Count           & $9$           & $8$             & $3$           & $7$
                & $2$           & $1$             & $2$           & $4$             & $3$
                & $1$           & $2$             & $1$           & $1$   \\
$3$-branch sets & $2$           & $1$             & $1$           & $1$
                & $2$           & $1$             & $2$           & $1$             & $1$
                & $2$           & $1$             & $1$           & $1$ \\
\hline
Replete $S$ & \multicolumn{4}{c|}{$1296$} & \multicolumn{5}{c|}{$768$} & \multicolumn{3}{c|}{$75$} & $1$
\end{tabu}
\caption{An accounting of the replete branch sets of $T_3$ whose alphabets include $[3]=\{a,b,c\}$ (omitted from the column headings). The `Factor' accounts for the alphabets obtained by permuting the generators. The `$2$-branches' rows specify the types of $2$-branch assumed to be present (or simply the number if the possibilities are symmetric); the `Count' is the number of ways of selecting such a $2$-branch set, accounting for symmetries of $[n]$ which fix $\bar{\al}(S)$. The `$3$-branch sets' row counts the number of sets from Table \ref{table:hex1} which are compatible with the given ($1$- and) $2$-branches.}
\label{table:T3count}
\end{table}

\end{landscape}

\clearpage

\section{Mirigs}
\label{sec:mirig}

With all of this theory in place, we can at last move on to mirigs.

\subsection{Characteristic}
\label{ssec:char}

Just as in ring theory, an important feature of a rig is the minimal subring containing $0$ and $1$. For rings, one number suffices to characterize this subring, but for rigs we need two, corresponding to an indexing of the quotient rigs of $\Nbb$.

For $m\geq 0, n\geq 1$, we denote by $\Nbb_{m,n}$ the rig whose underlying set is $\{0,1,\dotsc,m+n-1\}$, where addition and multiplication are as in $\Nbb$ but with any result $k$ greater than $m+n-1$ reduced modulo $n$ into the range $m \leq k \leq m+n-1$. We omit the proof that all proper quotients of $\Nbb$ are of the form $\Nbb_{m,n}$ for some $m,n$. This rig is a ring if and only if $m = 0$, with $\Nbb_{0,n} \cong \Zbb/n\Zbb$. There exists a rig homomorphism $\Nbb_{m,n} \to \Nbb_{m',n'}$ if and only if $m' \leq m$ and $n' \mid n$.

\begin{exa}
\label{exa:R0}
Recall that the free rig on the empty set (zero generators) is isomorphic to the rig of natural numbers, $\Nbb$. The free mirig on zero generators is therefore the quotient of $\Nbb$ by the congruence generated by $n \approx n^2$. Examining the first few generating relations, we in particular have $2 \approx 4$, which implies that $4+k \approx 2+k$ for all $k \geq 0$. Applying this relation inductively, every even number greater than $2$ is identified with $2$ and every odd number greater than $2$ is identified with $3$. By a parity argument, the remaining basic relations create no further identifications; in other words, the congruence is generated by the basic relation $2 \approx 4$. Thus the free mirig on zero generators is isomorphic to $\Nbb_{2,2}$, and in notation to be established in Section \ref{ssec:direct}, we have:
\[|R_0| = 4.\]

Incidentally, by a similar argument one can show that the free rig subject to the equation $n = n^3$ on zero generators is isomorphic to $\Nbb_{2,6}$.
\end{exa}

\begin{defn}
We say a rig has \textbf{characteristic $(m,n)$} if the subrig generated by $1$ is isomorphic to $\Nbb_{m,n}$. We extend this with the convention that a rig has \textbf{characteristic $(\infty,0)$} if the subrig generated by $1$ is isomorphic to $\Nbb$, which is consistent with the conditions for the existence of a rig homomorphism to $\Nbb_{m,n}$ for every $m,n$.
\end{defn}

\begin{remark}
There is not a strongly established convention for expressing the characteristic of rigs in analogy with characteristic of rings. In the 1990s, Alarc\'{o}n and Anderson use the notation $B(n,i)$ for the rig which we would denote $\Nbb_{i,n-i}$, \cite[Example 3]{char1}; they use the corresponding convention for characteristic. Even earlier, in the 1970s, Weinert used a numbering convention offset from ours by $1$, \cite{char2}. We chose the present convention because it simplifies the conditions for the existence of rig homomorphisms between the rigs $\Nbb_{m,n}$.
\end{remark}

Whenever we have a rig homomorphism $R \to R'$, this restricts to a homomorphism between the respective subrigs generated by $1$. Since any mirig admits a (unique) rig homomorphism from the free mirig on zero generators, we can deduce the following from Example \ref{exa:R0}:

\begin{lem}
\label{lem:char}
A mirig has characteristic $(2,2)$, $(1,2)$, $(0,2)$, $(2,1)$, $(1,1)$ or $(0,1)$. In particular,
\begin{itemize}
	\item A free mirig has characteristic $(2,2)$.
	\item A mirig has characteristic $(0,2)$ if and only if it is a Boolean ring.
	\item A mirig has characteristic $(0,1)$ if and only if it is (isomorphic to) the degenerate rig with $0=1$.
\end{itemize}
\end{lem}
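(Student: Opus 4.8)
The plan is to leverage Example \ref{exa:R0}, which identifies the free mirig on zero generators with $\Nbb_{2,2}$, together with the functoriality of the "subrig generated by $1$" construction. First I would observe that for any mirig $R$ there is a unique rig homomorphism $\phi\colon R_0 \to R$ from the free mirig on zero generators, and that $\phi$ restricts to a surjection from $\Nbb_{2,2} = R_0$ onto the subrig $\langle 1 \rangle \subseteq R$ generated by $1$ (surjectivity because the image is a subrig containing $1$, hence contains $\langle 1\rangle$, and is contained in it). Thus the characteristic of $R$ corresponds to a quotient rig of $\Nbb_{2,2}$.

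Next I would enumerate the quotients of $\Nbb_{2,2} = \{0,1,2,3\}$. Writing the congruences explicitly, the possible quotients are $\Nbb_{2,2}$ itself, plus those obtained by imposing one of the identifications $2\approx 0$, $3 \approx 1$, or both, or $3 \approx 2$ (equivalently $1 \approx 0$, collapsing everything). Working through which of these yield well-defined rig quotients and matching each to the notation $\Nbb_{m,n}$, one gets exactly $\Nbb_{2,2}$, $\Nbb_{1,2}$ (identify $2$ with $0$... careful: one must check this is the right labelling), $\Nbb_{0,2} \cong \Zbb/2\Zbb$, $\Nbb_{2,1}$, $\Nbb_{1,1}$, and $\Nbb_{0,1}$ (the degenerate rig). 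I would spell out the lattice of these quotients via the homomorphism criterion ($\Nbb_{m,n}\to\Nbb_{m',n'}$ iff $m'\le m$ and $n'\mid n$) to confirm these six are precisely the quotients of $\Nbb_{2,2}$, i.e. those with $m \le 2$ and $n \mid 2$.

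For the three bullet points: a free mirig surjects onto $R_0 = \Nbb_{2,2}$ and also receives the map from $R_0$; since $R_0$ is itself free, the subrig generated by $1$ in a free mirig on any set is exactly the image of this map, and one checks no further collapse occurs (the composite $R_0 \to R \to R_0$ induced by sending all generators to, say, $0$ is the identity on $R_0$), so the characteristic is $(2,2)$. For the second bullet, characteristic $(0,2)$ means $\langle 1\rangle \cong \Zbb/2\Zbb$, so in particular $1+1 = 0$, giving every element an additive inverse ($x + x = x(1+1) = 0$ using $x^2 = x$ and distributivity — actually $x+x = x\cdot 1 + x \cdot 1 = x(1+1)$, wait, need $x\cdot(1+1)$, fine), hence $R$ is a ring, and a mirig that is a ring is a Boolean ring; conversely a Boolean ring has $1+1=0$ so characteristic $(0,2)$. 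For the third bullet, characteristic $(0,1)$ means $1 = 0$ in $\langle 1 \rangle$, hence in $R$, which forces $x = x\cdot 1 = x \cdot 0 = 0$ for all $x$, so $R$ is the degenerate one-element rig; the converse is immediate.

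I expect the main obstacle to be purely bookkeeping: correctly listing the congruences on $\Nbb_{2,2}$ and matching each quotient to the correct $\Nbb_{m,n}$ label, since the indexing is easy to get backwards. The conceptual content — functoriality of $\langle 1\rangle$ and the freeness argument pinning down characteristic $(2,2)$ — is short; verifying that $x+x = 0$ in characteristic $(0,2)$ and the degeneracy in characteristic $(0,1)$ are one-line computations using $x^2 = x$ and distributivity.
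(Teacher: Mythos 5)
Your proposal is correct and follows essentially the same route as the paper, which likewise deduces the lemma from Example \ref{exa:R0} by restricting the unique homomorphism $R_0 \cong \Nbb_{2,2} \to R$ to the subrig generated by $1$ and classifying the quotients of $\Nbb_{2,2}$ as the $\Nbb_{m,n}$ with $m \le 2$ and $n \mid 2$; your retraction argument for the free case and the one-line computations for characteristics $(0,2)$ and $(0,1)$ fill in details the paper leaves implicit. The only blemish is the parenthetical claim that $3 \approx 2$ collapses everything (it yields $\Nbb_{2,1}$, while $2 \approx 1$ yields $\Nbb_{1,1}$ and $1 \approx 0$ gives the collapse), but your final list and the verification via the homomorphism criterion are correct, so this is exactly the bookkeeping slip you flagged rather than a gap.
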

One can check manually that the quotients of $\Nbb$ with these characteristics are all mirigs, which demonstrates that all signatures in Lemma \ref{lem:char} are possible.

\subsection{Mirigs need not be commutative}

Returning to Jean-Baptiste Vienney's original question, we are already more than equipped to provide counterexamples to the conjecture that mirigs are commutative. The following was proposed by Tim Campion in the original Zulip discussion.

\begin{exa}[Campion's idempotent monoid to mirig construction]
\label{exa:idmon2mirig}
Let $M$ be an idempotent monoid, viewing the monoid operation on $M$ as multiplication and denoting the identity element by $1$. Let $M' = M \sqcup \{0,2\}$. Extend the monoid structure on $M$ to a structure on $M'$ by setting
\begin{align*}
	2 \cdot 0 = 0 \cdot 2 &= 2\\
	2 \cdot 2 &= 2 \\
	0 \cdot 0 &= 0
\end{align*}
and for each $m \in M$,
\begin{align*}
	m \cdot 2 = 2 \cdot m &= 2 \\
	m \cdot 0 = 0\cdot m &= 0.
\end{align*}
Then define addition on $M'$ by making $0$ the additive identity, making $2$ an absorbing element (so $x + 2 = 2 + x = 2$ for all $x \in M'$), and setting $m + n = 2$ for $m,n \in M$. Then $M$ is a mirig, which is commutative if and only if $M$ is.

Using the free idempotent monoid on two generators from Example \ref{exa:M2}, we obtain an instance of a non-commutative mirig with just $9$ elements. Observe that all mirigs produced by this construction have characteristic $(2,1)$, and if we apply it to the trivial monoid we recover $\Nbb_{2,1}$.
\end{exa}

In describing this class of examples, Tim Campion drew our collective attention to the OEIS entry on idempotent monoids \cite{oeis}, which in turn brought us to Green and Rees' work on free idempotent monoids. The present author observed in light of this that the free mirig on finitely many generators must be finite (see Lemma \ref{lem:Rnfinite}), and wondered exactly how large this mirig is; this calculation turned out to be challenging even in the fairly simple case of two generators. John Baez \href{https://mathstodon.xyz/@johncarlosbaez/109544916566242548}{posted this problem} on the distributed social networking platform Mastodon in December 2022. There followed a collaborative puzzle-solving effort, some details of which are recorded below. Further discussion can be found at the Zulip channel linked above and in Baez's \href{https://johncarlosbaez.wordpress.com/2022/12/21/free-idempotent-rigs-and-monoids/}{blog post} on the subject.

\subsection{Free mirigs}
\label{ssec:direct}

Let us establish some notation for free mirigs. We write $S_n$ for the free rig on $[n]$. Let ${\approx}$ be the congruence on $S_n$ generated by the relation $r \approx r^2$, so that $R_n := S_n/{\approx}$ is the free mirig on $[n]$.

We have a natural inclusion $F_n \subseteq S_n$ on the singleton sums, and the restriction of ${\approx}$ to this subset coincides with ${\sim}$. However, the congruence on $S_n$ generated by ${\sim}$ is strictly smaller than ${\approx}$; we shall get an indication of the how much smaller it is in Lemma \ref{lem:simsim}. By mild abuse of notation, we write ${\sim}$ for the congruence on $S_n$ generated by the congruence ${\sim}$ on $F_n$. Then (the additive monoid of) $S_n/{\sim}$ is isomorphic to the free commutative monoid on $F_n/{\sim} = M_n$; we accordingly denote the quotient rig $S_n/{\sim}$ by $\Nbb[M_n]$.

Between ${\sim}$ and ${\approx}$ we have the congruence ${\simeq}$ obtained by adding the basic relation $2 \simeq 4$. This congruence identifies elements of $\Nbb[M_n]$ if their coefficients are equivalent in $\Nbb_{2,2}$. As such, we denote $S_n/{\simeq}$ by $\Nbb_{2,2}[M_n]$. In summary, we have quotient maps,
\[ S_n \twoheadrightarrow \Nbb[M_n] \twoheadrightarrow \Nbb_{2,2}[M_n] \twoheadrightarrow R_n.\]

By inspection of the penultimate map in the sequence, we can deduce the result which inspired the Mastodon discussion.

\begin{lem}
\label{lem:Rnfinite}
Elements of $R_n$ are presented by elements of $\Nbb_{2,2}[M_n]$. In particular, $|R_n| \leq |\Nbb_{2,2}[M_n]| = 4^{|M_n|}$ is finite for all $n$.
\end{lem}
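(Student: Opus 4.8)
The plan is to exploit the chain of quotient maps $S_n \twoheadrightarrow \Nbb[M_n] \twoheadrightarrow \Nbb_{2,2}[M_n] \twoheadrightarrow R_n$ already established in the text. The key point is the very last map: since $R_n = S_n/{\approx}$ and $\Nbb_{2,2}[M_n] = S_n/{\simeq}$ with ${\simeq} \subseteq {\approx}$, the universal property of the quotient gives a surjection $\Nbb_{2,2}[M_n] \twoheadrightarrow R_n$, so every element of $R_n$ is represented by (at least one) element of $\Nbb_{2,2}[M_n]$. This is the substantive content of the first sentence of the statement, and it is essentially immediate once one checks that the relation $2 \simeq 4$ is indeed a consequence of $r \approx r^2$: this is exactly the computation carried out in Example \ref{exa:R0}, where $1+1 \approx (1+1)^2 = 4$, so ${\simeq}$ is coarser than ${\approx}$ as claimed.

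Next I would count $|\Nbb_{2,2}[M_n]|$. By the discussion preceding the lemma, $S_n/{\sim} = \Nbb[M_n]$ is the free commutative monoid (i.e. the monoid rig over $\Nbb$) on the finite set $M_n$, and passing from ${\sim}$ to ${\simeq}$ amounts to reducing each coefficient modulo the congruence defining $\Nbb_{2,2}$ on $\Nbb$. Hence an element of $\Nbb_{2,2}[M_n]$ is precisely a function $M_n \to \Nbb_{2,2}$, i.e. an assignment of one of the four coefficients $\{0,1,2,3\}$ to each element of the finite monoid $M_n$. Since $|M_n|$ is finite by Theorem \ref{thm:idmonfin}, there are exactly $4^{|M_n|}$ such functions, giving $|\Nbb_{2,2}[M_n]| = 4^{|M_n|}$.

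Combining the two steps: $R_n$ is a quotient of a set of size $4^{|M_n|}$, so $|R_n| \leq 4^{|M_n|} < \infty$. The only mild subtlety — and the one place worth being careful — is to make sure that $\Nbb_{2,2}[M_n]$ really is identified with the set of functions $M_n \to \Nbb_{2,2}$ and not something larger or smaller; this follows because $S_n/{\sim}$ is free commutative on $M_n$ (so its elements are finitely-supported functions $M_n \to \Nbb$, and since $M_n$ is finite every function is finitely supported) and because the congruence ${\simeq}$ acts coefficientwise via the quotient $\Nbb \twoheadrightarrow \Nbb_{2,2}$, which is well-defined precisely because addition and multiplication in $\Nbb_{2,2}$ are induced from those of $\Nbb$. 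I do not expect a serious obstacle here; the whole lemma is a bookkeeping consequence of the quotient tower and the finiteness of $M_n$, and the reference to Example \ref{exa:R0} supplies the one nontrivial verification (that $2 \simeq 4$ follows from multiplicative idempotency).
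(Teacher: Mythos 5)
Your proposal is correct and follows exactly the paper's route: the paper deduces the lemma ``by inspection of the penultimate map'' in the quotient tower $S_n \twoheadrightarrow \Nbb[M_n] \twoheadrightarrow \Nbb_{2,2}[M_n] \twoheadrightarrow R_n$, and you simply make explicit the two routine checks (that $2\approx 4$ so ${\simeq}\subseteq{\approx}$ and the map to $R_n$ factors through $\Nbb_{2,2}[M_n]$, and that $|\Nbb_{2,2}[M_n]| = 4^{|M_n|}$ since $M_n$ is finite). One terminological nit: since ${\simeq}\subseteq{\approx}$, it is ${\approx}$ that is the \emph{coarser} congruence (${\simeq}$ is finer), though your actual argument uses the containment correctly.
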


A theme of the approaches taken by the Mastodon participants to compute $|R_2|$ was to consider the set of representatives with small coefficients from Lemma \ref{lem:Rnfinite} and iteratively apply congruences to determine which of these representatives are identified by ${\approx}$. Simon Frankau did this (\href{https://github.com/simon-frankau/two-generator-idempotent-rigs}{in Rust}, \cite{Frankau}) by inductively generating the congruence ${\simeq}$. Greg Egan took a similar approach (\href{https://github.com/nagegerg/IdempotentRig}{in C++}, \cite{Egan}). It turns out that the procedure for generating the congruence can be simplified thanks to the following result, based on a \href{https://schelling.pt/@alexthecamel/109550926277189496}{comment of Alex Gunning} in the Mastodon discussion.

\begin{lem}[Gunning's Lemma]
\label{lem:simsim}
The congruence ${\approx}$ on $S_n$ is generated by the elementary relations $2 \approx 4$, $w \approx w^2$ and $u + v \approx u + uv + vu + v$ for $u,v,w$ monomials (individual words).
\end{lem}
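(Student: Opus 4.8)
The plan is to show that the congruence $\approx'$ generated by the three families of elementary relations listed---namely $2 \approx 4$, $w \approx w^2$ for monomials $w$, and $u + v \approx u + uv + vu + v$ for monomials $u,v$---coincides with $\approx$. One inclusion is trivial: each elementary relation is a consequence of $r \approx r^2$ (the first two directly, and the third via $(u+v) \approx (u+v)^2 = u^2 + uv + vu + v^2 \approx u + uv + vu + v$, using $u \approx u^2$ and $v \approx v^2$), so $\approx' \subseteq \approx$. The work is in the reverse inclusion: every instance of the defining relation $r \approx r^2$ must be derivable from the elementary ones.

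Since $\approx'$ is a congruence, it suffices to take an arbitrary element $r \in S_n$, written as a finite sum of monomials $r = w_1 + \dots + w_k$ (with repetitions allowed), and show $r \approx' r^2$ using only elementary moves. First I would expand $r^2 = \sum_{i,j} w_i w_j$, which splits as $\sum_i w_i^2 + \sum_{i \ne j} w_i w_j$. The idea is to build $r^2$ up from $r$ one monomial at a time. Using $w \approx' w^2$ on each summand in turn, we get $r \approx' \sum_i w_i^2 + (\text{cross terms not yet present})$, but the cleaner route is induction on $k$: assuming the claim for sums of fewer than $k$ monomials, write $r = u + s$ where $u = w_1$ is a single monomial and $s = w_2 + \dots + w_k$. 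By the third elementary relation applied with the monomial $u$ and... the subtlety is that $s$ is not a monomial, so the relation $u + v \approx u + uv + vu + v$ does not apply directly with $v = s$. The resolution is to apply the third relation repeatedly, once for $u$ against each monomial $w_j$ of $s$, together with left/right multiplication by monomials to generate all the needed mixed products; one shows $u + s \approx' (u+s)^2$ by accumulating the terms $u^2$, $u w_j$, $w_j u$ for all $j$, then invoking the inductive hypothesis $s \approx' s^2$ and the congruence property to fill in $\sum_{i,j \ge 2} w_i w_j$.

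Concretely, the key steps in order: (1) verify $\approx' \subseteq \approx$ as above; (2) reduce the reverse inclusion to proving $r \approx' r^2$ for every $r \in S_n$, using that $\approx$ is generated as a congruence by these instances; (3) set up the induction on the number $k$ of monomial summands of $r$, with base case $k=1$ being exactly the second elementary relation; (4) in the inductive step, peel off one monomial $u$ and use the third relation (possibly pre- and post-multiplied by the identity monomial, so no extra multiplications are even needed here since $u,w_j$ are monomials and their products $uw_j, w_ju$ are again monomials) to introduce all cross terms $uw_j$ and $w_ju$; (5) apply the inductive hypothesis to $s = r - u$ and use compatibility of $\approx'$ with addition to assemble the full expansion of $r^2$; (6) observe that the coefficient bookkeeping---some cross terms may be produced with multiplicity differing from what appears in $r^2$---is absorbed by the relation $2 \approx 4$ together with $w \approx w^2$ (which forces all coefficients into the range governing $\Nbb_{2,2}$, exactly as in Example \ref{exa:R0}), so equality of the two sides holds on the nose in $S_n/{\approx'}$.

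The main obstacle I anticipate is precisely the coefficient management in step (6): applying the third elementary relation naively generates cross terms $uv$ and $vu$ each time, and iterating over all pairs will overproduce copies of various monomials relative to the clean expansion $r^2 = \sum_{i,j} w_i w_j$. One must check carefully that the discrepancy is always by an amount that is killed by $2 \approx 4$ (equivalently, that coefficients agree after reduction into $\{0,1,2,3\}$ with $2$ and $4$ identified), which requires tracking multiplicities through the induction rather than just the set of monomials appearing. A clean way to organize this is to first prove the lemma modulo the observation that, in $S_n/{\approx'}$, every monomial $w$ satisfies $2w \approx' 4w$ and $w \approx' w^2$, so the rig $S_n/{\approx'}$ is already a quotient of $\Nbb_{2,2}[M_n]$; then the only thing left to check is that addition behaves as in a mirig on that quotient, which is the content of the third relation and is verified by the peeling argument above without further arithmetic worry.
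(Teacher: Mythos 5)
Your proposal is correct and follows essentially the same route as the paper: reduce to showing $r \approx' r^2$, insert the cross terms pairwise via $u+v \approx u+uv+vu+v$, then square each original summand (the paper processes all pairs at once over coefficient-grouped terms rather than peeling off one monomial inductively, but the argument is the same). The coefficient worry in your step (6) is moot: applying the pair relation once for each copy of each $w_j$ produces every cross term with exactly the multiplicity occurring in $r^2$, so $2 \approx 4$ is only needed for the constant/coefficient manipulations, not to absorb any discrepancy.
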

\begin{proof}
Let ${\approx'}$ be the congruence generated by the given relations. We have ${\approx'} \subseteq {\approx}$ by inspection. We shall show that ${\approx'} \supseteq {\approx}$; it suffices to prove that if $r$ is a sum with $N$ terms, then $r \approx' r^2$. The cases $N = 0,1,2$ are almost satisfied by assumption: we know that the first relation implies $k \approx' k^2$ for all constants $k$ by the argument of Example \ref{exa:R0}; combining this with the second generating relation we have $kw \approx' k^2w^2$ for all words $w$ and coefficients $k$, and similarly for two-term sums. So let $r = \sum_{i=0}^{N} k_i w_i$ be an element of $S_n$ with $N \geq 3$ terms. For each pair of indices $i < j$, we may use the two-term sum relation to substitute $k_iw_i + k_jw_j$ for $k_iw_i + k_iw_ik_jw_j + k_jw_jk_iw_i + k_jw_j$ (one pair at a time) and then substitute all of the original terms for their squares using the one-term relations. The final expression is $r^2$, whence $r \approx' r^2$, as required.
\end{proof} 

Alex Gunning calculated $|R_2|$ (\href{https://github.com/agunning/freerig}{in Python}, \cite{Gunning}) by generating a graph whose nodes are elements of $\Nbb_{2,2}[M_2]$ and whose edges are of the form $w + x(u+v)y \to w + x(u + uv + vu + v)y$. Lemma \ref{lem:simsim} exactly says that the connected components of this graph correspond to ${\approx}$ equivalence classes. While the same strategy may be feasible for a few more generators, the growth rate of $|M_n|$ (and the corresponding explosion in size of $4^{|M_n|}$) is prohibitive.

\begin{exa}
\label{exa:R1}
We can use generic instances of Gunning's Lemma to improve the bound of Lemma \ref{lem:Rnfinite} and hence reduce the size of the computations. For example, observe that for any $x$, we have $1+x \approx (1+x)^2 \approx 1 + 3x$. As such, by separately accounting for elements of $\Nbb_{2,2}[M_n]$ having $0$ coefficient at the identity and elements having coefficient at least $1$ we can restrict the coefficients in the latter case to $0$, $1$ or $2$. We thus obtain a bound of:
\begin{equation}
	\label{eq:betterbound}
	|R_n| \leq 4^{|M_n|-1} + 3 \times 3^{|M_n|-1}.
\end{equation}
When $n=1$ this gives a bound of $13$, and it is straightforward to manually check that there are no further identifications. That is,
\[|R_1| = 13.\]
\end{exa}

\begin{remark}
For larger numbers of generators, even the bound provided by \eqref{eq:betterbound} is loose: although for $n = 2$ it reduces the upper bound from $16,384$ to $6,283$, we shall eventually see that the actual value of $|R_2|$ is an order of magnitude smaller still. While it is possible to produce further \textit{ad hoc} improvements in a similar vein, we decided to take a more systematic approach to obtain exact values in the present article.
\end{remark}

Computational efficiency aside, all three of the cited participants in the Mastodon discussion agreed on a final count of $|R_2| = 284$. In the remainder of the present article, we shall extend our preparatory work on idempotent monoids from Section \ref{sec:idMon} to reaffirm this value and then calculate $|R_3|$ by hand. The author hopes that these methods might eventually be adapted into a program capable of computing $|R_n|$ for larger values of $n$ (we expect this to be feasible up to $n=8$), so that the sequence $|R_n|$ can be added to the OEIS alongside the entry counting the cardinalities of free idempotent monoids, \cite{oeis}.

\subsection{Thickets}
\label{ssec:forests}

From here onwards, we pass across the isomorphism $M_n \cong T_n$, although it will occasionally be convenient for concrete examples to return to (equivalence classes of) words in $M_n$. The sequence of quotients from earlier now becomes,
\[ S_n \twoheadrightarrow \Nbb[T_n] \twoheadrightarrow \Nbb_{2,2}[T_n] \twoheadrightarrow R_n.\]

\begin{defn}
\label{defn:thickets}
We shall refer to elements of $\Nbb[T_n]$ as \textbf{forests} (of labelled, rooted, binary trees), and correspondingly refer to elements of $\Nbb_{2,2}[T_n]$ as \textbf{thickets}.
\end{defn}

We shall primarily employ thickets as representatives for elements of $R_n$. The goal of this section is to derive properties of thickets which are invariant under the congruence ${\approx}$ such that each element of $R_n$ is uniquely determined by these invariants. We begin with two definitions on thickets.

\begin{defn}
\label{def:apparity}
The \textbf{apparity} function\footnote{The term `parity' is so well established for reduction modulo $2$ that I needed to derive a new term from it.} $P:\Nbb_{2,2}[T_n] \to \Nbb_{2,2}$ sends a thicket $f$ to the sum of its coefficients in $\Nbb_{2,2}$. This is the rig homomorphism generated by sending all generators to $1 \in \Nbb_{2,2}$.

We extend $\al:T_n \to \Pcal([n])$ to an \textbf{alphabet} function $\al:\Nbb_{2,2}[T_n] \to \Pcal([n])$ by taking the union of the function's values over the trees appearing as summands in a given thicket.
\end{defn}

\begin{lem}
\label{lem:Palm}
The maps $P$ and $\al$ descend to functions on $R_n$; we use the same names for the resulting functions.
\end{lem}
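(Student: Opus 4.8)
The plan is to show that both $P$ and $\al$ are constant on each $\approx$-equivalence class, so that they factor through the quotient map $\Nbb_{2,2}[T_n] \twoheadrightarrow R_n$. Since $\approx$ is a congruence, it suffices to check invariance under the generating relations identified in Gunning's Lemma (Lemma \ref{lem:simsim}): the constant relation $2 \approx 4$, the squaring relation $w \approx w^2$ for $w$ a monomial, and the expansion relation $u + v \approx u + uv + vu + v$ for monomials $u,v$. Strictly speaking one should also note that these functions respect the rig operations on thickets in a way compatible with substitution into a larger thicket, but for $P$ this is automatic since it is a rig homomorphism, and for $\al$ it follows from the identity $\al(fg) = \al(f) \cup \al(g)$ and $\al(f+g) = \al(f) \cup \al(g)$ together with the fact that if $\al(f) = \al(f')$ then $\al(x f y + z) = \al(x f' y + z)$ for any thickets $x,y,z$.

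First I would treat $P$. Since $P$ is already defined as the rig homomorphism $\Nbb_{2,2}[T_n] \to \Nbb_{2,2}$ sending every generator to $1$, it is enough to observe that $\Nbb_{2,2}$ is itself a mirig: for the squaring relation we need $P(w)^2 = P(w)$, which holds because $P(w) \in \Nbb_{2,2}$ and $\Nbb_{2,2}$ satisfies $x^2 = x$; the relation $2 \approx 4$ holds in $\Nbb_{2,2}$ by construction; and $u+v \approx u + uv + vu + v$ maps under $P$ to an instance of $x+y = x + xy + yx + y$, which is just $(x+y)^2 = x+y$ expanded, again valid in the mirig $\Nbb_{2,2}$. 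Hence $P$ is constant on $\approx$-classes. Alternatively and more slickly: $R_n$ is a mirig, so there is a unique rig homomorphism $R_n \to \Nbb_{2,2}$ (sending generators to $1$, using that $\Nbb_{2,2}$ is the free mirig on zero generators by Example \ref{exa:R0}), and this composed with $\Nbb_{2,2}[T_n] \twoheadrightarrow R_n$ must equal $P$ by freeness, so $P$ factors through $R_n$.

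Next I would treat $\al$. Here the key point is that each generating relation preserves the alphabet. For $2 \approx 4$ this is trivial since both sides have empty alphabet (no generators appear in constant monomials). For $w \approx w^2$: in $T_n$, $\al(w \ast w) = \al(w) \cup \al(w) = \al(w)$ by Lemma \ref{lem:decompose} (or directly from the inductive definition of $\al$ on $T_n$), so the summands on both sides contribute the same set. For $u + v \approx u + uv + vu + v$: the left side contributes $\al(u) \cup \al(v)$, and the right side contributes $\al(u) \cup \al(uv) \cup \al(vu) \cup \al(v) = \al(u) \cup \al(v) \cup (\al(u)\cup\al(v)) \cup (\al(v)\cup\al(u)) \cup \al(v) = \al(u) \cup \al(v)$, again using $\al(uv) = \al(u)\cup\al(v)$. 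Substituting such a relation inside a larger thicket $w + x(\,\cdot\,)y$ changes the summands' alphabets only within the part being rewritten, and since that part keeps its total alphabet, the overall union is unchanged. Therefore $\al$ is constant on $\approx$-classes and descends to $R_n$.

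The only mild subtlety — and the closest thing to an obstacle — is bookkeeping the claim that preserving the union of alphabets of the \emph{rewritten block} suffices to preserve the union over the \emph{whole thicket}; this is immediate once one writes the elementary rewrite as $f = g + h \approx g + h' = f'$ with $\al(h) = \al(h')$, since then $\al(f) = \al(g) \cup \al(h) = \al(g) \cup \al(h') = \al(f')$, using only idempotency and commutativity of $\cup$. Everything else is a routine verification on the three generators of $\approx$ from Lemma \ref{lem:simsim}, so no genuine difficulty arises; the content is simply recognizing that $P$ and $\al$ are rig-homomorphic (respectively, lax-semilattice-homomorphic) in the appropriate sense.
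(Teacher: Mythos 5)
Your proof is correct and follows essentially the same route as the paper: the paper's proof is a one-line "by inspection, these functions are invariant across the basic relation $u+v \approx u+uv+vu+v$ identified in Lemma \ref{lem:simsim}", which is exactly your reduction to Gunning's generators (the other two relations being vacuous at the level of $\Nbb_{2,2}[T_n]$). Your extra bookkeeping about substitution into larger thickets and the alternative freeness argument for $P$ are just more explicit versions of the same check.
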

\begin{proof}
By inspection, these functions are invariant across the basic relation $u+v \approx u+uv+vu+v$ identified in Lemma \ref{lem:simsim}.
\end{proof}

\begin{defn}
\label{def:summand}
Let $f,f' \in \Nbb[T_n]$ or $\Nbb_{2,2}[T_n]$. We say $f'$ is a \textbf{summand} of $f$ if there exists $f''$ with $f = f' + f''$. Viewing $T_n$ as a subset of $\Nbb[T_n]$ (resp. $\Nbb_{2,2}[T_n]$), for $t \in T_n$ we write $t \in f$ when $t$ is a summand of $f$.
\end{defn}
Two forests or thickets represent the same element of $R_n$ whenever they are related by a series of substitutions of factors of summands by their squares or square roots.

\begin{lem}
\label{lem:amalgamation}
Let $f,f',f''$ be thickets and suppose that we can get from $f$ to $f'$ by a series of applications of expansions $u+v \mapsto u+uv+vu+v$ to factors of summands, and similarly from $f$ to $f''$. Then there exists $f'''$ reachable from $f'$ and $f''$ by expansions.
\end{lem}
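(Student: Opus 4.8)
The plan is to read Lemma~\ref{lem:amalgamation} as a confluence statement for the one-step rewriting relation $\to$ on thickets, where $f \to g$ means that $g$ is obtained from $f$ by a single expansion $x(u+v)y \mapsto x(u+uv+vu+v)y$ applied to a factor of a pair of summands. The observation that makes this tractable is that \emph{such an expansion is a purely additive operation}: having fixed trees $x,u,v,y \in T_n$, the corresponding expansion is applicable to a thicket $f$ precisely when the products $xuy$ and $xvy$ (taken in $T_n$) are both summands of $f$ in the sense of Definition~\ref{def:summand}, and whenever it is applicable its effect is to pass from $f$ to $f + xuvy + xvuy$, that is, to increment in $\Nbb_{2,2}$ the coefficients of the two specific trees $xuvy$ and $xvuy$. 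In particular the \emph{effect} of a given expansion is independent of which thicket it acts on; only its \emph{applicability} depends on $f$.

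First I would record two consequences of this. (1)~Supports only grow along expansions: if $f \to g$ then $\supp(g) \supseteq \supp(f)$, since adding $1$ or $2$ to an element of $\Nbb_{2,2} = \{0,1,2,3\}$ never yields $0$; hence $\supp(f') \supseteq \supp(f)$ whenever $f'$ is reachable from $f$ by a sequence of expansions, and therefore any expansion applicable to $f$ remains applicable to every thicket reachable from $f$. (2)~The result of a \emph{valid} sequence of expansions applied to $f$ (one in which each expansion is applicable when its turn comes) is simply $f$ together with, counted with multiplicity, all of the trees added by those expansions; this is immediate from the additivity in the previous paragraph together with the commutativity and associativity of addition in $\Nbb_{2,2}[T_n]$, and in particular it does not depend on the order of the expansions within the sequence.

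Now suppose $f \to^{*} f'$ via expansions $E_1,\dots,E_k$ and $f \to^{*} f''$ via expansions $F_1,\dots,F_l$. I would take $f'''$ to be the thicket obtained from $f$ by performing $E_1,\dots,E_k,F_1,\dots,F_l$ in that order. To see this sequence is valid, and hence $f'''$ well defined, write $h_j$ and $h'_j$ for the thickets reached after $F_1,\dots,F_j$ starting respectively from $f$ and from $f'$, and show by induction on $j$ that $\supp(h_j) \subseteq \supp(h'_j)$: the base case is $\supp(f) \subseteq \supp(f')$ by~(1), and for the step one uses that $F_{j+1}$ is applicable to $h_j$ by hypothesis, hence to $h'_j$ by the inclusion together with the applicability criterion, after which both supports are enlarged by the same fixed set of trees. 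This shows $F_1,\dots,F_l$ are successively applicable to $f'$, so $f' \to^{*} h'_l$; and by~(2), $h'_l = f' + (\text{trees added by the }F_j) = f + (\text{trees added by the }E_i) + (\text{trees added by the }F_j) = f'''$. Symmetrically, $E_1,\dots,E_k$ are successively applicable to $f''$ and yield $f'' + (\text{trees added by the }E_i) = f + (\text{trees added by the }F_j) + (\text{trees added by the }E_i) = f'''$, so $f'' \to^{*} f'''$, which completes the argument.

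The step that needs the most care is the support bookkeeping in the last paragraph: one must genuinely track the supports of the intermediate thickets along the grafted reduction sequence, not merely invoke one-step monotonicity, to be certain that each $F_j$ (and each $E_i$) remains applicable. A secondary point to verify is that the degenerate configurations are harmless --- $x$ or $y$ trivial (recovering the bare relation $u+v \approx u+uv+vu+v$ of Lemma~\ref{lem:simsim}), $u$ or $v$ not themselves summands, $xuy = xvy$, or $xuvy = xvuy$ --- since in each case the expansion still amounts to adding a fixed multiset of trees, so observations~(1) and~(2) are unaffected. Finally, I would note that this additive viewpoint, rather than a Newman-style local-confluence-plus-termination argument, is the right one here: the forward system is \emph{not} terminating, because once the support has stabilised the $\Nbb_{2,2}$-coefficients of the repeatedly added trees can keep oscillating (for instance between $2$ and $3$).
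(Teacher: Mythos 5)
Your argument is correct and is essentially the paper's own proof made explicit: the paper simply observes that, expansions being purely additive, $f$ is a summand of $f'$, so $f'$ can be expanded by the additions that took $f$ to $f''$ (informally, to $f'+f''-f$), and symmetrically for $f''$ --- exactly your grafting of $F_1,\dotsc,F_l$ onto $f'$ and of $E_1,\dotsc,E_k$ onto $f''$. One small refinement: in the degenerate case $xuy=xvy$ applicability requires $2\,xuy$ to be a summand (coefficient at least $2$ in $\Nbb_{2,2}$), so the bare support inclusion $\supp(h_j)\subseteq\supp(h'_j)$ is not quite the right induction invariant; instead use the stronger fact, already contained in your observation (2), that $h'_j = h_j + (\text{trees added by the } E_i)$, together with the triviality that summand-hood of any fixed thicket is preserved under adding a constant.
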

\begin{proof}
Since we apply only expansions, $f$ is a summand of $f'$, and so we may expand $f'$ to the thicket which may be informally denoted $f'+f''-f$. Reversing the analysis, this thicket is also accessible from $f''$.
\end{proof}

\begin{prop}
\label{prop:confluent}
Each element of $R_n$ admits a maximal representative thicket: a thicket $f$ such that all $f'$ with $f' \approx f$ are summands of $f$.
\end{prop}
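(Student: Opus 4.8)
The idea is to build the maximal representative as the ``union'' of all thickets equivalent to a fixed starting thicket $f_0$, using the one-step confluence of Lemma \ref{lem:amalgamation} to show this union is itself reachable. First I would fix a thicket $f_0$ representing the given element of $R_n$ and consider the (finite, by Lemma \ref{lem:Rnfinite}) set of all thickets $g$ with $g \approx f_0$. Since only finitely many trees appear across all such $g$, there is a well-defined coefficientwise supremum $f_{\max}$ in $\Nbb_{2,2}[T_n]$, obtained by taking at each tree $t$ the maximum (in the ordering $0 < 1 < 2$, with $2$ absorbing, matching $\Nbb_{2,2}$) of the coefficients of $t$ among all the $g$. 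The claim to prove is that $f_{\max} \approx f_0$ and that every $g \approx f_0$ is a summand of $f_{\max}$.

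The second half is immediate once the first is known: if $f_{\max} \approx f_0$ then $f_{\max} \approx g$ for every $g \approx f_0$, and $g$ is a summand of $f_{\max}$ by construction of the coefficientwise supremum (each coefficient of $g$ is at most the corresponding coefficient of $f_{\max}$, and summand-hood in $\Nbb_{2,2}[T_n]$ is exactly coefficientwise domination, since $2$ absorbs). So the work is to show $f_{\max} \approx f_0$. For this I would use that $\approx$ on thickets is generated (Lemma \ref{lem:simsim}, transported across $M_n \cong T_n$) by the expansion moves $w \mapsto w + \dots$; more usefully, by the analysis in Lemma \ref{lem:amalgamation}, any thicket $g \approx f_0$ can be reached from $f_0$ by a sequence of \emph{expansions} $u + v \mapsto u + uv + vu + v$ applied to factors of summands (since in $\Nbb_{2,2}$ all coefficient reductions $4 \approx 2$ are also reachable via expansions, as in Example \ref{exa:R0}), so in particular $f_0$ is a summand of $g$ and $g$ is a summand of any further expansion of it. Now I would argue by a finite amalgamation: enumerate the thickets $g_1, \dots, g_m$ equivalent to $f_0$, and apply Lemma \ref{lem:amalgamation} repeatedly — starting from $f_0 \rightsquigarrow g_1$ and $f_0 \rightsquigarrow g_2$, get $h_2 \approx f_0$ reachable by expansions from both, hence with $g_1, g_2$ as summands; then amalgamate $h_2$ with $g_3$, and so on. After $m$ steps we obtain $h_m \approx f_0$ reachable by expansions from $f_0$, having every $g_i$ as a summand; in particular every coefficient of $h_m$ dominates the corresponding coefficient of $f_{\max}$, so $h_m$ and $f_{\max}$ agree coefficientwise (the reverse domination holds because $h_m$ is itself one of the $g_i$), giving $f_{\max} = h_m \approx f_0$.

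The step I expect to be the main obstacle — or at least the one needing the most care — is justifying the repeated application of Lemma \ref{lem:amalgamation} cleanly: the lemma as stated gives a \emph{common expansion} of two thickets that are themselves expansions of a fixed $f$, and I must verify that the output $h_2$ really is reachable from $f_0$ (not merely from $g_1$) by expansions, so that the inductive hypothesis is maintained at each amalgamation step. This follows because ``reachable from $f_0$ by expansions'' is transitive and each $g_i$ is reachable from $f_0$ by expansions, but one should check that the construction in Lemma \ref{lem:amalgamation} (informally $g_i + h_{i-1} - f_0$, expanding the common summand $f_0$) does not secretly require subtractions that are invalid in $\Nbb_{2,2}[T_n]$; the resolution is that one never needs to subtract — one simply expands $h_{i-1}$ along the same sequence of moves that produced $g_i$ from $f_0$, which is legitimate since $f_0$ is a summand of $h_{i-1}$. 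A secondary point worth stating explicitly is finiteness: without $|R_n| < \infty$ and hence finitely many thickets in the class, the ``supremum'' and the finite amalgamation would not be available, so I would cite Lemma \ref{lem:Rnfinite} at the outset.
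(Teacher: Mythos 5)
There is a genuine gap, and it sits exactly where you flagged your "main obstacle", but it is more serious than bookkeeping: your argument rests on the claim that \emph{every} thicket $g \approx f_0$ is reachable from $f_0$ by expansions alone, and this is false. Expansions $w + x(u+v)y \mapsto w + x(u+uv+vu+v)y$ never delete a tree from the support of a thicket; the $\Nbb_{2,2}$ wrap-around you invoke only lets coefficients cycle among the values $2$ and $3$ (and reach $2$ from $4$), it never returns a positive coefficient to $0$. So, for instance, $g = a+b$ and $f_0 = a+b+ab+ba$ are $\approx$-equivalent by a single basic relation, yet $g$ is not an expansion of $f_0$ because $\supp(g) \subsetneq \supp(f_0)$. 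Since $f_0$ was fixed arbitrarily, your enumeration $g_1,\dotsc,g_m$ will in general contain thickets that are not expansions of $f_0$, and then the very first amalgamation step cannot be performed: Lemma \ref{lem:amalgamation} requires \emph{both} of its inputs to be expansions of the common base. The relation $\approx$ on thickets is the symmetric--transitive closure of the expansion moves, i.e.\ equivalence means being connected by a zigzag $f_0 \leq h_1 \geq h_2 \leq \cdots$, not by a directed chain, and the whole content of the proposition is precisely that this zigzag structure can be straightened.

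The repair is the paper's route (which your amalgamation idea is already most of the way toward): first observe that $g \approx f_0$ iff $g$ and $f_0$ are joined by a zigzag in the expansion preorder, then use Lemma \ref{lem:amalgamation} inductively on the length of the zigzag to replace each span $h \leq h'$, $h \leq h''$ by a cospan, so that any two equivalent thickets admit a \emph{common upper bound} under expansion. Directedness of each (finite, by Lemma \ref{lem:Rnfinite}) equivalence class then yields a single thicket of which every equivalent thicket is a summand. Your coefficientwise-supremum packaging is dispensable once this is done, and it is also slightly off as stated: in $\Nbb_{2,2}$ one has $2+1=3$, so $2$ is not absorbing, and $2$ and $3$ are incomparable coefficients that are nevertheless summands of one another (this is exactly why the maximal representative is only unique up to such swaps, cf.\ Example \ref{exa:maxNotUniq}); the safe statement is that summand-hood corresponds to the order generated by $0 \leq 1 \leq 2$, $1 \leq 3$, and $2,3$ mutually dominating.
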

\begin{proof}
Preorder $\Nbb_{2,2}[T_n]$ by expansion: $f \leq f'$ if $f=f'$ or there is a sequence of expansions which may be applied to $f$ to obtain $f'$; note that this is not a partial order relation (see Example \ref{exa:maxNotUniq}, below). Two elements $f,f'$ represent the same element of $R_n$ if and only if they are related by a zigzag in this poset, meaning $f=f_1 \leq f_2 \geq f_3 \leq \cdots \geq f_n = f'$. Applying Lemma \ref{lem:amalgamation} iteratively, we conclude that any zigzag can be reduced to a single cospan: two elements are related if and only if they have a mutual upper bound in the poset. Since there are only finitely many thickets, this means that each component of the preorder contains a maximal element, as required.
\end{proof}

\begin{exa}
\label{exa:maxNotUniq}
Consider $2ab+2aba+3bab+2ba \in \Nbb_{2,2}[M_2]$. Applying the expansion $ab+ba \mapsto ab+aba+bab+ba$, we obtain $2ab+3aba+2ab+2ba$; applying this expansion again returns us to the original element. Both thickets are maximal representatives for the same element of $R_n$.
\end{exa}

\begin{remark}
From the perspective of \textit{rewriting theory}, Proposition \ref{prop:confluent} is quite unusual: one commonly encounters rewriting systems (formal systems of directed rules for manipulating formal languages) in which the rewriting rules \textit{reduce} (some notion of) the size of terms, so that in a confluent system one arrives at minimal terms. Rewriting systems producing maximal terms are rare because an upper bound on sizes, such as the finiteness employed in the proof of Proposition \ref{prop:confluent}, is needed to guarantee their existence.
\end{remark}

\begin{defn}
\label{def:support}
The \textbf{support} of a forest or thicket $f$ is $\supp(f):=\{t \mid t \in f\}$ (the set of summands obtained by ignoring the coefficients).
\end{defn}
While the support of a thicket is clearly not invariant under ${\approx}$, the support \textit{is} invariant amongst equivalent \textit{maximal} thickets.

\begin{lem}
\label{lem:suppinvariant}
Suppose $f \approx f' \in \Nbb_{2,2}[T_n]$ are two maximal thickets. Then $\supp(f) = \supp(f')$ is a subsemigroup of $T_n$.
\end{lem}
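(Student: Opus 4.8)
The plan is to derive the statement from the definition of a maximal thicket (Proposition~\ref{prop:confluent}), the expansion relation of Gunning's Lemma (Lemma~\ref{lem:simsim}), and one elementary fact about the coefficient rig: in $\Nbb_{2,2}$ a sum $x+y$ vanishes only when $x=y=0$, since $1+1=2$, $2+2=2$, $3+3=2$, and so on. I would record first the consequence that if a thicket $g$ is a summand of a thicket $h$, say $h=g+g'$, then $\supp(g)\subseteq\supp(h)$: for $t\in\supp(g)$ the coefficient of $t$ in $g$ is nonzero, hence so is the coefficient of $t$ in $h=g+g'$, i.e.\ $t\in\supp(h)$.

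For the equality $\supp(f)=\supp(f')$, note that since $f\approx f'$ and $f'$ is maximal, $f$ is a summand of $f'$; symmetrically, since $f$ is maximal, $f'$ is a summand of $f$. The observation above gives both inclusions, so the two supports coincide.

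For the claim that $S:=\supp(f)$ is a subsemigroup, I would fix $s,t\in S$ and show the product $st$ in $T_n$ lies in $S$ (the case of $ts$ is identical). If $s=t$ this is immediate since $st=s$. If $s\neq t$, I first write $f=s+t+g$ for a suitable thicket $g$; this is possible because $s,t\in\supp(f)$ and every coefficient $c\in\{1,2,3\}$ can be written $c=1+c'$ with $c'\in\{0,1,2\}$, so one copy each of $s$ and $t$ may be peeled off into the remainder. Since $\approx$ is a congruence, the relation $s+t\approx s+st+ts+t$ of Lemma~\ref{lem:simsim} gives $f\approx f':=s+st+ts+t+g$. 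Now $f$ is maximal and $f'\approx f$, so $f'$ is a summand of $f$, whence $\supp(f')\subseteq\supp(f)$ by the opening observation; as $st$ and $ts$ have nonzero coefficient in $f'$, they lie in $S$. Thus $S$ is closed under the tree product, so is a subsemigroup (the degenerate case $f=0$ giving the empty subsemigroup). The argument is essentially bookkeeping, and I do not anticipate a genuine obstacle; the only delicate point is that ``being a summand'' must be made to interact correctly with the non-cancellative addition of $\Nbb_{2,2}$, which is exactly what the first observation handles.
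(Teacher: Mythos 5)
Your proof is correct and follows essentially the same route as the paper: both directions of $\supp(f)=\supp(f')$ come from maximality plus the fact that supports of summands are contained in the support of the whole, and closure under products comes from peeling off $s+t$ and applying the expansion $s+t\approx s+st+ts+t$ together with maximality. The extra bookkeeping you supply (non-vanishing of sums in $\Nbb_{2,2}$ and the decomposition of coefficients) is sound and merely makes explicit what the paper leaves implicit.
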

\begin{proof}
The support of $f$ contains the support of any summand of $f$, including $f'$ by the assumption of maximality. Conversely, $\supp(f) \subseteq \supp(f')$. To show that $\supp(f)$ is a subsemigroup, suppose we are given $s,t \in f$. Then either $s=t=st \in f$ or $s+t+st+ts$ is a summand of $f$, so $st,ts \in f$.
\end{proof}

The subsemigroup $\supp(f)$ doesn't contain enough information to recover the (equivalence class of the) maximal thicket $f$, and conversely not every subsemigroup of $T_n$ is the support of a maximal thicket. We shall close this gap shortly.

\begin{defn}
\label{def:uniform}
We say a thicket $f \in \Nbb_{2,2}[T_n]$ is \textbf{uniform} if $f$ is not $0$ and whenever $f = f_1 + f_2$, we have $\al(f_1) = \al(f_2)$ or $f_1f_2 = 0$.
\end{defn}
The maximal uniform thickets are those whose support is a uniform subsemigroup of $T_n$. Accordingly, we can decompose any thicket $f$ into a sum of uniform thickets in a canonical way:
\begin{equation}
\label{eq:uniform}
f = \sum_{A \subseteq \al(f)} f_A,
\end{equation}
where $f_A$ is (either zero or) the summand consisting of all trees $t \in f$ with $\al(t) = A$. Some of the most important uniform summands of $f$ turn out to be those consisting of lonely trees.

\begin{lem}
\label{lem:lonelyhypothesis}
Let $f$ be a maximal thicket and $S := \supp(f)$. Then $P(f_A) = 1$ implies that $A$ is minimal in $\bar{\al}(S)$.
\end{lem}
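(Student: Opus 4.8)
The plan is to argue by contradiction: first I would extract from $P(f_A) = 1$ a rigid description of $f_A$, and then use maximality of $f$ to force a coefficient that is impossible in $\Nbb_{2,2}$.

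First I would unpack the hypothesis. Since $P(f_A)$ is the sum in $\Nbb_{2,2}$ of the coefficients of those summands of $f$ whose alphabet is $A$, and since any natural number $\geq 2$ reduces in $\Nbb_{2,2}$ to $2$ or $3$ (never back to $1$), the equation $P(f_A) = 1$ can only hold if $f_A$ is a single tree $t$ occurring with coefficient exactly $1$. In particular $S_A = \supp(f_A) = \{t\}$, so $t$ is a lonely tree of $S$, and $A \in \bar{\al}(S)$, so that the minimality of $A$ in $\bar{\al}(S)$ is a meaningful assertion.

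Next I would suppose, for a contradiction, that $A$ is not minimal, so some $B \in \bar{\al}(S)$ satisfies $B \subsetneq A$, and pick any $s \in S_B$ (nonempty by definition of $\bar{\al}(S)$). Since $S = \supp(f)$ is a subsemigroup (Lemma \ref{lem:suppinvariant}, or directly: $s + t + st + ts$ is a summand of the maximal thicket $f$), both $st$ and $ts$ lie in $S$; moreover $\al(st) = \al(ts) = B \cup A = A$, so $st, ts \in S_A = \{t\}$, whence $st = ts = t$. As $s \neq t$ and both occur in $f$, I can write $f = c_s\,s + t + g$ with $c_s \geq 1$ and $g$ having coefficient $0$ at $s$ and $t$; applying the expansion $s + t \mapsto s + st + ts + t$ of Gunning's Lemma (Lemma \ref{lem:simsim}) to the summands $s$ and $t$, and substituting $st = ts = t$, this rewrites $f$ to the equivalent thicket $f' := c_s\,s + 3t + g$. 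By maximality of $f$ (Proposition \ref{prop:confluent}), $f'$ is a summand of $f$, say $f = f' + f''$; comparing coefficients of $t$ in $\Nbb_{2,2}$ gives $1 = 3 + c$, where $c$ is the coefficient of $t$ in $f''$, which is impossible since $3 + c \in \{2,3\}$ for every $c \in \Nbb_{2,2}$. This contradiction shows no such $B$ exists, so $A$ is minimal in $\bar{\al}(S)$.

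I do not expect any serious obstacle here: the substantive input — existence of maximal representative thickets, the fact that their supports are subsemigroups, and Gunning's generating relations for ${\approx}$ — is all already in place, and what remains is the elementary bookkeeping in $\Nbb_{2,2}$. The two points to handle carefully are the deduction that $P(f_A) = 1$ really forces a single summand of coefficient $1$, and the arithmetic fact that $3 + c$ is never $1$ in $\Nbb_{2,2}$; both are immediate from the description of $\Nbb_{2,2}$.
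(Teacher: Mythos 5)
Your proof is correct and follows essentially the same route as the paper: both extract the lonely tree $t$ with coefficient $1$ from $P(f_A)=1$, expand $s+t \mapsto s+st+ts+t$ against a tree $s$ of strictly smaller alphabet, note the new terms land in alphabet $A$, and use maximality of $f$ to contradict $P(f_A)=1$. Your extra observation that $st=ts=t$ and the explicit coefficient computation $1 = 3+c$ in $\Nbb_{2,2}$ is just a more detailed phrasing of the paper's conclusion that $P(f_A)$ would have to be $2$ or $3$.
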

\begin{proof}
We already know from Proposition \ref{prop:subsemibound} that $A$ must be union-irreducible, since $P(f_A) = 1$ implies that $t \in S_A$ is a lonely tree. Suppose there exists $A' \subsetneq A$ in $\bar{\al}(S)$, and let $t' \in S_{A'}$. Then $t+t'$ is a summand of $f$, whence $t+t'+tt'+t't$ is too, but the latter pair of terms also lie in $S_A$, whence $P(f_A) = 2$ or $3$ by maximality, a contradiction.
\end{proof}

\begin{remark}
\label{rem:treecoeffs}
Note that if $f$ is a maximal thicket and $P(f_A)$ is $2$ or $3$, then any $t \in f_A$ must have coefficient $2$ or $3$ also. Indeed, there must exist some other tree $s \in f_A$ (possibly another copy of $t$) to which we may apply the expansions $t+s \mapsto t+s+st+ts \mapsto 2t+2s+ts+st$ to conclude by maximality that $f$ must have $2t+2s$ as a summand, so $t$ has coefficient $2$ or $3$.
\end{remark}

\begin{lem}
\label{lem:fixedparity}
Given equivalent maximal thickets $f \approx f'$, we have $P(f_A) = P(f'_A)$ for each $A \subseteq [n]$.
\end{lem}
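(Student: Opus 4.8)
The plan is to argue by induction on $n$, using a family of truncation homomorphisms to reduce the statement for a layer $f_A$ with $A \subsetneq [n]$ to the \emph{top} layer of a free mirig on $|A| < n$ generators, and then to settle the remaining case $A = [n]$ directly from the $\approx$-invariance of the apparity $P$ (Lemma~\ref{lem:Palm}).

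For $A \subseteq [n]$, let $T_n^{\subseteq A} \subseteq T_n$ be the submonoid of trees $t$ with $\al(t) \subseteq A$, so that $\Nbb_{2,2}[T_n^{\subseteq A}]$ is, after relabelling generators, $\Nbb_{2,2}[T_{|A|}]$, with $\approx$ restricting to the congruence defining the free mirig $R_{|A|}$. First I would introduce the $\Nbb_{2,2}$-linear map $\pi_A : \Nbb_{2,2}[T_n] \to \Nbb_{2,2}[T_n^{\subseteq A}]$ fixing a tree $t$ when $\al(t) \subseteq A$ and sending it to $0$ otherwise; since $\al(st) = \al(s) \cup \al(t)$, this is a rig homomorphism, and inspection of the generating relations $u + v \approx u + uv + vu + v$ of Lemma~\ref{lem:simsim} — those not entirely supported on $T_n^{\subseteq A}$ being sent to trivialities — shows $\pi_A$ descends to a rig homomorphism $R_n \to R_{|A|}$. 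The crux, and the step I expect to be the main obstacle, is the claim that $\pi_A$ \emph{sends maximal thickets to maximal thickets}. To prove it, write $f = \pi_A(f) + r$, where $r$ collects the summands of $f$ whose alphabet is not contained in $A$. Given $h \in \Nbb_{2,2}[T_n^{\subseteq A}]$ with $h \approx \pi_A(f)$, I would lift a zigzag of expansions witnessing this equivalence (as in Lemma~\ref{lem:amalgamation} and Proposition~\ref{prop:confluent}) to a zigzag in $\Nbb_{2,2}[T_n]$: every expansion used involves only trees on the generators of $A$, hence does not interact with $r$, so $h + r \approx \pi_A(f) + r = f$. By maximality of $f$, $h + r$ is a summand of $f$; applying $\pi_A$, which preserves the summand relation and annihilates $r$, shows $h = \pi_A(h + r)$ is a summand of $\pi_A(f)$, as required.

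With this tool, I would proceed by strong induction on $n$ (the base case $n = 0$ being immediate since $R_0 = \Nbb_{2,2}$). Let $f \approx f'$ be maximal thickets and $S := \supp(f) = \supp(f')$ (Lemma~\ref{lem:suppinvariant}). For $A \subsetneq [n]$: the thickets $\pi_A(f) \approx \pi_A(f')$ are maximal in $\Nbb_{2,2}[T_{|A|}]$ with $|A| < n$, and their full-alphabet layer is precisely $f_A$, respectively $f'_A$, so the inductive hypothesis gives $P(f_A) = P(f'_A)$. For $A = [n]$: if $[n] \notin \bar{\al}(S)$ both layers vanish; if $\bar{\al}(S) = \{[n]\}$ then $f = f_{[n]}$ and $f' = f'_{[n]}$, whence $P(f_{[n]}) = P(f) = P(f') = P(f'_{[n]})$ by Lemma~\ref{lem:Palm}; otherwise $[n]$ is not minimal in $\bar{\al}(S)$, so Lemma~\ref{lem:lonelyhypothesis} (with $f_{[n]} \neq 0 \neq f'_{[n]}$) forces $P(f_{[n]}), P(f'_{[n]}) \in \{2,3\}$, and setting $s := \sum_{B \subsetneq [n]} P(f_B) = \sum_{B \subsetneq [n]} P(f'_B)$ (equal by the case already handled) we obtain $s + P(f_{[n]}) = P(f) = P(f') = s + P(f'_{[n]})$ from Lemma~\ref{lem:Palm}. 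Since $s + 3 = (s+2)+1$ and the map $x \mapsto x + 1$ is fixed-point-free on $\Nbb_{2,2}$, we have $s + 2 \neq s + 3$, forcing $P(f_{[n]}) = P(f'_{[n]})$ and completing the induction.
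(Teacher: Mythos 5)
Your argument is correct, but it takes a genuinely different route from the paper. You prove the invariance globally, by induction on the number of generators: the truncation maps $\pi_A$ (killing trees whose alphabet is not contained in $A$) are rig homomorphisms that descend to $R_n \to R_{|A|}$, and — this is your crux, and it is sound — they send maximal thickets to maximal thickets, since an $\approx$-equivalence over the alphabet $A$ lifts to one over $[n]$ after adding back the remainder $r$, maximality of $f$ makes $h+r$ a summand of $f$, and applying the additive map $\pi_A$ makes $h$ a summand of $\pi_A(f)$. This reduces every layer $A \subsetneq [n]$ to the top layer of a smaller free mirig, and the top layer is then handled with Lemma~\ref{lem:Palm}, Lemma~\ref{lem:suppinvariant}, Lemma~\ref{lem:lonelyhypothesis} and the observation that $s+2 \neq s+3$ in $\Nbb_{2,2}$ (your fixed-point-free $x \mapsto x+1$ trick; note that plain cancellation would fail here). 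The paper instead argues locally, expansion by expansion: any expansion applicable to a maximal thicket adds exactly two trees on a single alphabet whose apparity is already $2$ or $3$ — either because the alphabet is minimal and the expansion needs two summands there, or by Lemma~\ref{lem:lonelyhypothesis} when it is not minimal — so every layerwise apparity is preserved, and the claim follows because equivalent maximal thickets are reachable from one another by expansions. The paper's proof is shorter and gives the slightly stronger local fact that each individual expansion of a maximal thicket fixes all the $P(f_A)$; yours is longer and leans on more machinery, but yields as a reusable by-product the structural fact that the retraction onto the free mirig on a subset of generators preserves maximality of thickets (in effect, compatibility of the maximal-representative description with the quotients $R_n \to R_{|A|}$), and it only invokes the lonely-tree lemma for the full alphabet. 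One small remark: your parenthetical claim that $\approx$ \emph{restricts} to the congruence defining $R_{|A|}$ is more than you need (the needed inclusion is the easy one, that $A$-world expansions remain valid over $[n]$), though it does follow once $\pi_A$ is known to descend.
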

\begin{proof}
Let $S := \supp(f)$. Consider an expansion $x(u+v)y \mapsto x(u+v+uv+vu)y$ which can be applied to $f$. If $\al(xuy)=\al(xvy)$ is minimal in $\bar{\al}(S)$, then for the expansion to be applicable to $f$ we must have $P(f_{\al(xuv)})$ being two or three, and this expansion adds two trees on the same alphabet to $f$, which does not affect $P(f_A)$ (nor the parity of any other uniform summand of $f$).

Otherwise, by Lemma \ref{lem:lonelyhypothesis} $P(f_{\al(xuvy)})$ is equal to two or three since $\al(xuvy)$ is not minimal in $\bar{\al}(S)$, and hence adding two trees on this alphabet does not change the parity of the uniform summands.

Since $f'$ is an expansion of $f$ and vice versa, the result follows.
\end{proof}

Following Lemmas \ref{lem:lonelyhypothesis} and \ref{lem:fixedparity}, we call a lonely tree with coefficient $1$ in a maximal thicket $f$ a \textbf{straggler} and say $f$ is \textbf{without stragglers} if it has none, meaning $P(f_A) \neq 1$ for all $A \subseteq [n]$. We can decompose any maximal $f \in \Nbb_{2,2}[T_n]$ as $f^{\mathrm{str}}+f^{\mathrm{w/o}}$ where the first term is the sum of the stragglers of $f$ and the latter is the maximal summand of $f$ without stragglers.

\begin{cor}
\label{cor:unstraggled}
Let $f \approx f'$ be maximal thickets. Then $f^{\mathrm{str}}={f'}^{\mathrm{str}}$ and $f^{\mathrm{w/o}} \approx {f'}^{\mathrm{w/o}}$. Moreover, the latter thickets are maximal.
\end{cor}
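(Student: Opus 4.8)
The plan is to combine the invariance results already established, namely Lemma \ref{lem:fixedparity} together with Remark \ref{rem:treecoeffs} and the definition of straggler, to show that stragglers are literally preserved (not just up to $\approx$) and that everything else behaves coherently under expansions.

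First I would argue that $f^{\mathrm{str}} = {f'}^{\mathrm{str}}$. By Lemma \ref{lem:fixedparity}, $P(f_A) = P(f'_A)$ for every $A \subseteq [n]$, so $f$ and $f'$ have stragglers on exactly the same alphabets $A$ (those with $P(f_A) = 1$). For each such $A$, $f_A = f'_A$ is a single tree with coefficient $1$ by Lemma \ref{lem:lonelyhypothesis} (the alphabet is minimal, so $S_A$ is a singleton), and since $\supp(f) = \supp(f')$ by Lemma \ref{lem:suppinvariant}, this tree is the same in both. Hence $f^{\mathrm{str}} = {f'}^{\mathrm{str}}$ as thickets.

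Next I would treat $f^{\mathrm{w/o}}$. Since $f^{\mathrm{str}}$ is a common summand of $f$ and $f'$ and $f = f^{\mathrm{str}} + f^{\mathrm{w/o}}$ (an honest sum of thickets, with no interaction since stragglers sit on minimal alphabets — I should check that adding a straggler $t$ and a tree $s$ with $\al(t) \subseteq \al(s)$ is already forbidden by maximality, which is exactly the content of Lemma \ref{lem:lonelyhypothesis}), subtracting the identical straggler part from an expansion of $f$ yields an expansion of $f^{\mathrm{w/o}}$, and vice versa. More carefully: any expansion applicable to $f$ either involves only trees lying in $f^{\mathrm{w/o}}$ — in which case it descends to an expansion of $f^{\mathrm{w/o}}$ — or it involves a straggler; but an expansion $x(u+v)y \mapsto x(u+uv+vu+v)y$ with $xuy$ a straggler would, by Remark \ref{rem:treecoeffs} applied to the resulting thicket, force the straggler's coefficient up to $2$ or $3$, contradicting maximality of $f$ unless the expansion is trivial on that summand. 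So the zigzag of expansions connecting $f$ to $f'$ restricts to a zigzag connecting $f^{\mathrm{w/o}}$ to ${f'}^{\mathrm{w/o}}$, giving $f^{\mathrm{w/o}} \approx {f'}^{\mathrm{w/o}}$.

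Finally, maximality of $f^{\mathrm{w/o}}$: if $g \approx f^{\mathrm{w/o}}$ then $g + f^{\mathrm{str}} \approx f$, so $g + f^{\mathrm{str}}$ is a summand of $f$ by maximality of $f$; since $g$ has no straggler on any alphabet carrying a straggler of $f$ (its apparities there are $0$ or $\geq 2$, matching), $g$ is a summand of $f^{\mathrm{w/o}}$. The main obstacle I anticipate is the bookkeeping in the second step — verifying rigorously that an expansion touching a straggler is necessarily trivial, which relies on correctly invoking Remark \ref{rem:treecoeffs} (stated for maximal thickets) at the right point of the zigzag rather than a generic intermediate thicket; I would handle this by first passing to the maximal representative, applying the straggler decomposition there, and only then analysing expansions between the two fixed maximal thickets $f$ and $f'$.
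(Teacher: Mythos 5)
The first part of your argument (equality of the stragglers via Lemmas \ref{lem:suppinvariant}, \ref{lem:fixedparity} and \ref{lem:lonelyhypothesis}) is fine and is exactly the paper's route. The gap is in your second step: you dismiss expansions involving a straggler by claiming that such an expansion would, via Remark \ref{rem:treecoeffs}, ``force the straggler's coefficient up to $2$ or $3$'' and so contradict maximality of $f$. That is not what such an expansion does. The expansion $x(u+v)y \mapsto x(u+v+uv+vu)y$ with $xuy$ a straggler leaves the coefficients of $xuy$ and $xvy$ untouched and only adds $xuvy+xvuy$, whose alphabet strictly contains $\al(xuy)$ and hence carries apparity $2$ or $3$; in $\Nbb_{2,2}$ this merely toggles those coefficients between $2$ and $3$. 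Such expansions are perfectly compatible with maximality and genuinely occur in zigzags between equivalent maximal thickets (cf.\ Remark \ref{rem:stability} and Example \ref{exa:maxNotUniq}); for instance, with $n=2$ the expansion $a+b \mapsto a+b+ab+ba$ applies to the maximal thicket $a+b+2ab+2ba+2aba+2bab$ and yields another equivalent maximal thicket. Remark \ref{rem:treecoeffs} only concerns trees on alphabets of apparity $2$ or $3$ and says nothing about stragglers, whose alphabets retain apparity $1$ by Lemma \ref{lem:fixedparity}. So your dichotomy collapses: the zigzag from $f$ to $f'$ need not restrict to one between $f^{\mathrm{w/o}}$ and ${f'}^{\mathrm{w/o}}$ for the reason you give.

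What is actually needed (and what the paper's proof supplies) is a simulation argument: when an expansion of $f$ involves a straggler, its effect on the non-straggler part is to add $x(uv+vu)y$, and since $f$ is maximal these trees already appear in $f^{\mathrm{w/o}}$ with coefficients $2$ or $3$; the zigzag $x(uv+vu)y \mapsto x(uv+vu+uvu+vuv)y \mapsto x(2uv+2vu+uvu+vuv)y$, the last thicket being also an expansion of $x(2uv+2vu)y$, then shows $f^{\mathrm{w/o}} \approx f^{\mathrm{w/o}}+xuvy+xvuy$ using only non-straggler summands. With that repair the equivalence $f^{\mathrm{w/o}} \approx {f'}^{\mathrm{w/o}}$ follows. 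Your closing argument for maximality of $f^{\mathrm{w/o}}$ (add $f^{\mathrm{str}}$ back, use maximality of $f$, then strip it off) is essentially sound and is a reasonable alternative to the paper's one-line justification, but the reason $g$ can have no summands on straggler alphabets is the summand arithmetic in $\Nbb_{2,2}$ (one cannot solve $c+1+k=1$ with $c \geq 1$, nor $c+k=0$ with $c\geq 1$), rather than the vaguer ``its apparities there are $0$ or $\geq 2$'' you invoke.
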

\begin{proof}
That $f$ and $f'$ must have the same stragglers follows from Lemma \ref{lem:fixedparity} and Lemma \ref{lem:suppinvariant}.

Suppose we are given an expansion from $f^{\mathrm{w/o}}$ involving a straggler, say $x(u+v)y \mapsto x(u+v+uv+vu)y$ with $xuv$ a straggler. Then by maximality of $f$, $x(uv+vu)y$ is a summand of $f^{\mathrm{w/o}}$, and we have expansions $x(uv+vu)y \mapsto x(uv+vu+uvu+vuv)y \mapsto x(2uv+2vu + uvu+vuv)y \mapsfrom x(2uv+2vu)y$. It follows that any expansion from $f$ to $f'$ can be transformed into a witness that $f^{\mathrm{w/o}} \approx {f'}^{\mathrm{w/o}}$, and conversely the fact that any expansion of $f^{\mathrm{w/o}}$ can be transformed into an expansion of $f$ ensures that $f^{\mathrm{w/o}}$ is maximal.
\end{proof}

The fact that $\supp(f)$ is a subsemigroup for maximal $f$ accounts for closure under expansions of the form $u+v \mapsto u+v+uv+vu$, but it does not control expansions of factors of thickets. Having decomposed our maximal thickets, we can characterize the subsemigroups arising as there supports.

\begin{lem}
\label{lem:inverseimagemonoids}
Let $f \in \Nbb_{2,2}[T_n]$ be a maximal thicket. Then $\supp(f^{\mathrm{w/o}})$ is a replete subsemigroup of $T_n$.
\end{lem}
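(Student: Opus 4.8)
The plan is to show that for any $x,y \in T_n$, the $(x,y)$-factor $x \backslash \supp(f^{\mathrm{w/o}}) / y$ is a subsemigroup, which is exactly the definition of repleteness. So suppose $u,v$ are trees with $xuy, xvy \in \supp(f^{\mathrm{w/o}})$; I must show $xuvy \in \supp(f^{\mathrm{w/o}})$. The obvious move is to apply the expansion $x(u+v)y \mapsto x(u+v+uv+vu)y$ to $f^{\mathrm{w/o}}$ — but this requires both $xuy$ and $xvy$ to appear as summands of a \emph{single} thicket equivalent to $f^{\mathrm{w/o}}$, and moreover it only directly adds $xuvy$ and $xvuy$ to that thicket; one then needs maximality of $f^{\mathrm{w/o}}$ (Corollary \ref{cor:unstraggled}) to conclude $xuvy$ is a summand of $f^{\mathrm{w/o}}$ itself, hence in $\supp(f^{\mathrm{w/o}})$.

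First I would note that by Lemma \ref{lem:suppinvariant}, $\supp(f^{\mathrm{w/o}})$ is a subsemigroup, and by Corollary \ref{cor:unstraggled} $f^{\mathrm{w/o}}$ is a maximal thicket in its own right, so I may assume without loss of generality that $f = f^{\mathrm{w/o}}$ is maximal and straggler-free, with $S := \supp(f)$, and I just need $S$ to be replete. Given $xuy, xvy \in S$, the first subtlety is the degenerate situation $xuy = xvy$: then $u$ and $v$ need not be equal, but $xuvy$ might differ from both. Here I would invoke the argument already sketched in the paragraph before Lemma \ref{lem:uniformrepleteness}: if $xuy = xvy =: t$, then $x \backslash \{t\} / y$ \emph{is} closed under products when $y = ()$, but in general one should instead observe that $t + t$ is a summand of $f$ (or use Remark \ref{rem:treecoeffs}), apply the expansion $x(u+v)y \mapsto x(u+v+uv+vu)y$ to the two copies of $t$, and read off that $xuvy$ becomes a summand, so by maximality $xuvy \in S$. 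When $xuy \neq xvy$, both appear as distinct summands of $f$ (since $\supp(f) = S$ contains both), so the expansion applies directly and the same maximality argument gives $xuvy, xvuy \in S$.

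The main obstacle — and the reason the straggler decomposition was set up in the previous lemmas — is the following: when we perform the expansion $x(u+v)y \mapsto x(u+v+uv+vu)y$ on $f^{\mathrm{w/o}}$, the two \emph{new} trees $xuvy$ and $xvuy$ lie on the alphabet $A := \al(xuvy) = \al(xuy) \cup \al(xvy)$, and if $A$ were minimal in $\bar\al(S)$ we would risk creating a tree on an alphabet where $f$ is supposed to be straggler-free but where adding trees could either violate maximality accounting or, worse, the expansion might not be legitimately applicable. I would handle this by checking that $A$ cannot be minimal in $\bar\al(S)$ precisely when $xuy \neq xvy$: then $\al(xuy)$ and $\al(xvy)$ are both in $\bar\al(S)$ and their union is $A$, and if both equalled $A$ we would be in the uniform case handled above, while if one is strictly contained in $A$ then $A$ is not minimal. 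Since $A$ is not minimal, by Lemma \ref{lem:lonelyhypothesis} $P(f_A) \in \{2,3\}$ (equivalently $f$ is straggler-free there), so the relevant uniform summand $f_A$ already has enough trees for the expansion to be genuinely available and for maximality to force $xuvy, xvuy$ into $f_A$, hence into $S = \supp(f)$. In the uniform case $\al(xuy) = \al(xvy) = A$, either $xuy$ and $xvy$ are distinct summands of $f_A$ (so $P(f_A) \geq 2$ automatically and we proceed as before) or they coincide and Remark \ref{rem:treecoeffs} again gives coefficient $\geq 2$; in every case the expansion is applicable and maximality closes the argument. This shows $xuvy \in S$, so $x \backslash S / y$ is a subsemigroup for all $x,y$, i.e. $S = \supp(f^{\mathrm{w/o}})$ is replete.
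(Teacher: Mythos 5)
Your proposal is correct and uses the same core mechanism as the paper's (one-line) proof: since $xuy+xvy$ is a summand, expand it to $x(u+v+uv+vu)y$ and invoke maximality to conclude $xuvy,xvuy$ lie in the support; your reduction to $f^{\mathrm{w/o}}$ via Corollary \ref{cor:unstraggled} and your treatment of the degenerate case $xuy=xvy$ via Remark \ref{rem:treecoeffs} just make explicit what the paper leaves implicit. The minimality-of-alphabet analysis in your final paragraph is superfluous: once you have reduced to the maximal straggler-free thicket, applicability of the expansion needs only that $xuy+xvy$ is a summand, and maximality alone forces the new trees into the support regardless of which alphabets they land on.
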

\begin{proof}
If $u,v \in x\backslash \supp(f^{\mathrm{w/o}}) / y$, then $xuy+xvy$ is a summand of $f$, so $x(u+v+uv+vu)y$ is a summand of $f$ by maximality and $uv,vu \in x\backslash \supp(f^{\mathrm{w/o}}) / y$.
\end{proof}

\begin{exa}
\label{exa:hungrystragglers}
Consider a thicket consisting of a single straggler $t$ such that $\al(t) \geq 3$. Since we need at least two summands to perform an expansion, $t$ is the only thicket in its ${\approx}$-equivalence class, but by Proposition \ref{prop:repletepaths} the minimum possible size of a replete subsemigroup containing $t$ is $4$, whence $\supp(t)$ is not replete. This is why we needed to separate out the stragglers above.
\end{exa}

Abstracting the results above, we arrive at the following definition.

\begin{defn}
\label{defn:complementary}
We say a set $D \subseteq T_n$ of trees is \textbf{sparse} if whenever $s,t \in D$ satisfy $\al(s) \subseteq \al(t)$, we have $s=t$. Given a sparse subset $D$ and a replete subsemigroup $S \subseteq T_n$, we say that $S$ \textbf{dominates} $D$ if the following conditions hold:
\begin{itemize}
	\item $\bar{\al}(D) \cap \bar{\al}(S) = \emptyset$,
	\item $D \cup S$ is a subsemigroup of $T_n$, and
	\item $\al(t)$ is minimal in $\bar{\al}(D \cup S)$ for each $t \in D$.
\end{itemize}

Given a pair $(S,D)$ in which $S$ dominates $D$, a \textbf{parity function} (for $(S,D)$) is a function $p:\Pcal([n]) \to \{0,1\}$ such that $p|_{\bar{\al}(D)} \equiv 1$ and $p(A) = 1$ implies $A \in \bar{\al}(S \cup D)$.

We define a \textbf{complementary triple in $T_n$} to be a triple $(S,D,p)$ consisting of a replete subsemigroup $S \leq T_n$, a sparse subset $D \subseteq T_n$ dominated by $S$ and a parity function $p$ for $(S,D)$.
\end{defn}

\begin{remark}
\label{rem:stability}
Let $f$ be a maximal thicket, define $D := \supp(f^{str})$ and $S := \supp(f^{w/o})$. Given $xuy \in D$ and $xvy \in S$, the evident expansion requires that we should have $xuvy,xvuy \in S$. The conditions of Definition \ref{defn:complementary} are sufficient to guarantee this, thanks to the argument of Lemma \ref{lem:uniformrepleteness}: we observe that $t:= xuyxvy \in D \cup S$ is present by the condition that $D \cup S$ should be a subsemigroup of $T_n$ and work in the replete uniform subsemigroup $S_{\al(t)}$ to deduce that (the left and right branches of) the trees $xuvy,xvuy$ are present. Similar analysis applies to $xuy \neq xvy \in D$. As such, we do not have to impose these as extra restrictions in defining complementary triples.
\end{remark}

\begin{thm}
\label{thm:complements}
There is a bijection between complementary triples $(S,D,p)$ and elements of the free mirig $R_n$. 
\end{thm}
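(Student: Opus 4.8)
\emph{The map and its well-definedness.} The plan is to define a map $\Phi \colon R_n \to \{\text{complementary triples}\}$, check it lands where claimed, and prove it bijective. Given $r \in R_n$, choose a maximal representative thicket $f$ (Proposition \ref{prop:confluent}) and set $D := \supp(f^{\mathrm{str}})$, $S := \supp(f^{\mathrm{w/o}})$, with $p(A) = 1$ iff $P(f_A)$ is odd, i.e.\ $P(f_A) \in \{1,3\}$. Independence of the choice of $f$ follows from the invariance results already proved: $\supp$ is constant on an $\approx$-class of maximal thickets (Lemma \ref{lem:suppinvariant}), the apparities $P(f_A)$ are constant (Lemma \ref{lem:fixedparity}), and the straggler decomposition is preserved (Corollary \ref{cor:unstraggled}). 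Observe that $S$, $D$ and the bit $p(A)$ in fact recover all of $P(f_A) \in \{0,1,2,3\}$: it is $0$ off $\bar{\al}(S \cup D)$, $1$ on $\bar{\al}(D)$, and $2$ or $3$ on $\bar{\al}(S)$ according to $p(A)$.

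\emph{$\Phi$ lands in complementary triples.} Repleteness of $S$ is Lemma \ref{lem:inverseimagemonoids}. For sparseness of $D$: if $s,t \in D$ with $\al(s) \subseteq \al(t)$ and $s \neq t$, then $\al(s) \subsetneq \al(t)$ makes $\al(t)$ non-minimal in $\bar{\al}(\supp f)$, contradicting Lemma \ref{lem:lonelyhypothesis} applied to the straggler $t$; so $\al(s) = \al(t)$, forcing $s = t$ since $t$ is lonely. For domination: $\bar{\al}(D) \cap \bar{\al}(S) = \emptyset$ by construction of the straggler split; $D \cup S = \supp(f)$ is a subsemigroup by Lemma \ref{lem:suppinvariant}; and $\al(t)$ is minimal in $\bar{\al}(D \cup S)$ for $t \in D$ by Lemma \ref{lem:lonelyhypothesis}. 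Finally $p$ is a parity function: $p \equiv 1$ on $\bar{\al}(D)$ since stragglers have $P(f_A) = 1$, and $p(A) = 1$ forces $f_A \neq 0$, hence $A \in \bar{\al}(\supp f) = \bar{\al}(S \cup D)$.

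\emph{Surjectivity.} Given a complementary triple $(S,D,p)$, form
\[
g := \sum_{t \in D} t + \sum_{A \in \bar{\al}(S)} \Bigl( 2\sum_{t \in S_A} t + \varepsilon_A\, t_A \Bigr),
\]
where $\varepsilon_A = 1$ when $p(A) = 1$ and $0$ otherwise, and $t_A \in S_A$ is chosen arbitrarily; then $\supp(g) = S \cup D$ and $P(g_A)$ equals the value of $P$ prescribed above. Let $f$ be a maximal representative of $g$. The key point is that no applicable expansion of $g$ can introduce a tree outside $S \cup D$: the three conditions of Definition \ref{defn:complementary} are exactly what the argument of Remark \ref{rem:stability} needs, so $\supp(f) = S \cup D$. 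Moreover every expansion adds two trees to a uniform piece whose alphabet is either non-minimal or already carries apparity at least $2$ (the reasoning behind Lemmas \ref{lem:lonelyhypothesis} and \ref{lem:fixedparity}), so $P(f_A) = P(g_A)$ for all $A$; hence the straggler decomposition of $f$ returns $(S,D,p)$, i.e.\ $\Phi([g]) = (S,D,p)$.

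\emph{Injectivity and the main obstacle.} Suppose maximal thickets $f, f'$ satisfy $\Phi([f]) = \Phi([f'])$. Then they share $S$, $D$, $p$, hence the same support $S \cup D$ and, by the observation above, $P(f_A) = P(f'_A)$ for all $A$; in particular $f^{\mathrm{str}} = \sum_{t \in D} t = {f'}^{\mathrm{str}}$ literally, so it remains to show $f^{\mathrm{w/o}} \approx {f'}^{\mathrm{w/o}}$. Each uniform summand of a maximal straggler-free thicket is itself maximal as a uniform thicket (an internal expansion altering one summand would contradict global maximality), and a summand-wise equivalence reassembles into a global one, so the problem reduces to a \textbf{uniform classification}: on a replete uniform subsemigroup $U$ with $\al(U) \neq \emptyset$, any two maximal uniform thickets with the same apparity $P \in \{2,3\}$ are $\approx$-equivalent. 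This is the heart of the argument, and I expect it to be the main obstacle; I would prove it by showing that the expansion moves act transitively on uniform thickets of support $U$ and fixed apparity — connecting any one of them to the canonical thicket $2\sum_{t \in U} t$ (or that plus one extra summand when $P = 3$) — which simultaneously yields the independence of the choices $t_A$ used in $g$. Given this, $\Phi$ is injective, and with surjectivity it is a bijection whose inverse sends $(S,D,p)$ to $[g]$; everything other than the transitivity statement is bookkeeping built on the lemmas already in hand.
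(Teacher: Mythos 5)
Most of your proposal is sound and tracks the paper's proof: the forward map and its well-definedness via Lemmas \ref{lem:suppinvariant}, \ref{lem:fixedparity} and Corollary \ref{cor:unstraggled}, the verification that the output is a complementary triple, and the surjectivity argument (the paper instead shows directly that any thicket of your canonical form is already maximal -- its ``Claim 1'' -- but your variant of passing to a maximal representative and controlling support and apparities via Remark \ref{rem:stability} comes to the same thing). The genuine gap is exactly where you flag it: injectivity. You reduce it to the claim that any two maximal uniform thickets on a replete uniform subsemigroup with the same apparity are $\approx$-equivalent, and then only announce that you ``would prove it by showing that the expansion moves act transitively.'' That transitivity is the entire content of the paper's second claim and is not obvious: expansions $x(u+v)y \mapsto x(u+v+uv+vu)y$ only ever add the products $xuvy, xvuy$, so it is not clear a priori that you can toggle the coefficients of two \emph{chosen} trees $s,t$ rather than of whatever products happen to arise.

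The missing idea is a concrete move supplied by Corollary \ref{cor:product}: if $s \neq t$ lie in the same uniform piece (so $\al(s)=\al(t)$), then $(st)(ts) = s$ and $(ts)(st) = t$, so since $st, ts$ are present with coefficient at least $2$ one may expand $st + ts \mapsto st + ts + s + t$, which increments the coefficients of $s$ and $t$ by $1$, i.e.\ toggles each between $2$ and $3$ in $\Nbb_{2,2}$. Given two maximal thickets with the same $(S,D,p)$, their straggler parts agree and all remaining coefficients lie in $\{2,3\}$ with equal sums modulo $2$ on each alphabet, so on each uniform piece the set of trees where the coefficients disagree has even size; pairing these up and applying the move above repeatedly expands one thicket into the other. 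Until you exhibit such a move (or some substitute), the ``uniform classification'' step -- and hence injectivity, as well as the independence of your arbitrary choices $t_A$ in the surjectivity construction -- remains unproved.
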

\begin{proof}
In one direction, given an element $x \in R_n$, let $f$ be any maximal thicket representing it. By the preceding results (specifically, Lemma \ref{lem:suppinvariant}, Lemma \ref{lem:fixedparity} and Corollary \ref{cor:unstraggled}) the map $x \mapsto (\supp(f^{\mathrm{str}}),\supp(f^{\mathrm{w/o}}),(A \mapsto P(f_A) \, \mathrm{mod} \, 2))$ is well-defined and satisfies all of the required conditions.

Conversely, given $(S,D,p)$, consider any thicket of the form
\[f:= \sum_{s \in D}s + \sum_{t \in S} k_tt,\]
where the coefficients $k_t \in \{2,3\}$ are chosen such that for each $A \in \al(S)$, $\sum_{t \in S_A} k_t \cong 2 + p(A)$ in $\Nbb_{2,2}$.

\textbf{Claim 1:} Any such $f$ is a maximal thicket.

Suppose that $f$ can be written in the form $f = g + x(u+v)y$. We must show that the expansion $g + x(u+v)y \mapsto g + x(u + v + uv + vu)y$ produces a summand of $f$. By the assumptions on complementary pairs or by Remark \ref{rem:stability} all of the possible choices for $xuy,xvy$ are accounted for to ensure that $xuvy$ and $xvuy$ were already summands of $f$ with coefficients $2$ or $3$, and the expansion above has increased these coefficients by $1$ each. In particular, expanding the same term a second time, we return to $f$, whence $g + x(u + v + uv + vu)y$ is a summand of $f$, as required.

\textbf{Claim 2:} Any pair of thickets constructed in this way are equivalent, so $f$ determines a well-defined element of $R_n$.

Consider $f$ and another thicket of the same form,
\[f' := \sum_{s \in D}s + \sum_{t \in S} k'_tt\]
as forests, in $\Nbb[T_n]$. For each $A \in \bar{\al}(S)$, consider the differences $c_t:= |k_t - k'_t|$ indexed by $t \in S_A$. By the assumption on the coefficients $k_t$ and their sums, $c_t \in \{0,1\}$ for all $t$ and the number of values of $t$ for which $c_t$ is non-zero (the \textit{discrepancy between $f$ and $f'$}) is even. If the discrepancy is zero, $f = f'$, so these are trivially ${\approx}$-equivalent. Otherwise, picking $s \neq t$ with $c_s = c_t = 1$, we consider the expansion $f = g + st + ts \mapsto g + st + ts + s + t$ (using Corollary \ref{cor:product}) which modifies $k_s$ and $k_t$ by $1$, hence ensuring $c_s=c_t=0$ and reducing the discrepancy between $f$ and $f'$ by $2$. Repeating this argument inductively, we expand $f$ into $f'$, as required. This completes the proof.
\end{proof}

\subsection{Triple accounting}

In this section we use all of the work up to this point to compute the cardinality of $R_n$ explicitly for the cases $n=2$ and $n=3$, extending our earlier calculation of the number of replete subsemigroups of $T_2$ and $T_3$.

Let $C_n$ denote the set of complementary triples in $T_n$. Since the number of parity functions for a pair $(S,D)$ depends only on $S$, we can decompose $|R_n| = |C_n|$ into a sum indexed by replete subsemigroups. A naive count results in the following formula:
\begin{equation}
\label{eq:Rn1}
|R_n| = \sum_{\text{replete }S \subseteq T_n} \#\{D \mid S \text{ dominates } D\} 2^{|\bar{\al}(S)|};
\end{equation}
While it is possible to use this formula directly to compute $|R_2|$, an alternative counting strategy can be derived by considering the replete subsemigroup generated by $D \cup S$ when $S$ dominates $D$. This will be a subsemigroup in which there is exactly one leftmost and rightmost path in the uniform subsemigroups corresponding to alphabets of elements of $D$.

\begin{prop}
\label{prop:Rn2}
For a replete subsemigroup $S \leq T_n$, let $\bar{\al}_m(S)$ be the subset of $\bar{\al}(S)$ consisting of the minimal elements $A$ for which $|\lmp(S_A)| = |\rmp(S_A)| = 1$. Then we have:
\begin{equation}
\label{eq:Rn2}
|R_n| = \sum_{\text{replete }S \leq T_n} \sum_{E \subseteq \bar{\alpha}_m(S)} 2^{\left(|\bar{\al}(S)|-|E|\right)} \prod_{A \in E} \prod_{j=1}^{|A|-1} j^{2^{(|A|-j+1)}-2}.
\end{equation}
\end{prop}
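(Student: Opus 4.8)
The plan is to reorganize the count of complementary triples (which equals $|R_n|$ by Theorem \ref{thm:complements}), refining the naive tally \eqref{eq:Rn1} by recording, for each complementary triple $(S,D,p)$, the replete subsemigroup $S' := \langle S \cup D \rangle_r$. Concretely, I would send $(S,D,p)$ to the tuple consisting of $S'$; the set $E := \bar{\al}(D)$; the family $(t_A)_{A \in E}$, where $t_A$ is the unique element of $D$ with $\al(t_A) = A$ (well-defined since $D$ is sparse, hence has at most one element of each alphabet, and $\bar{\al}(D)=E$); and the restriction $p' := p|_{\bar{\al}(S)}$. Because $D \cup S$ is a subsemigroup and $\bar{\al}(D)\cap\bar{\al}(S)=\emptyset$, Remark \ref{rem:repletealphabets} gives $\bar{\al}(S') = \bar{\al}(S)\sqcup E$, and since $p$ is forced to take value $1$ on $\bar{\al}(D)$ and $0$ off $\bar{\al}(S\cup D)$, it is recovered from $p'$, which ranges freely over functions $\bar{\al}(S')\setminus E \to \{0,1\}$. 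I would then prove this is a bijection onto the set of tuples $(S',E,(t_A)_{A\in E},p')$ with $S'$ replete, $E \subseteq \bar{\al}_m(S')$, each $t_A \in S'_A$, and $p' : \bar{\al}(S')\setminus E \to \{0,1\}$. Counting the fibre over a fixed $(S',E)$ gives $2^{|\bar{\al}(S')|-|E|}\prod_{A\in E}|S'_A|$, and since $A\in\bar{\al}_m(S')$ forces $S'_A$ to be the set of trees sharing one fixed leftmost path and one fixed rightmost path, Theorem \ref{thm:uniformcount}, Proposition \ref{prop:repletepaths} and Lemma \ref{lem:repletebranchset} (the latter applied to both branches) give $|S'_A| = \left(\prod_{j=1}^{|A|-1} j^{2^{|A|-j}-1}\right)^2 = \prod_{j=1}^{|A|-1} j^{2^{|A|-j+1}-2}$, which is precisely the inner product in \eqref{eq:Rn2}. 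Summing over all replete $S'$ and all $E\subseteq\bar{\al}_m(S')$ then yields the formula.

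The structural heart of the argument is a precise description of the replete closure: if $S$ dominates $D$ and $S' := \langle S\cup D\rangle_r$, then $S'_B = S_B$ for every $B \notin E := \bar{\al}(D)$, while for each $A \in E$ the uniform part $S'_A$ equals $\langle\{t_A\}\rangle_r$, the set of all trees with leftmost path $\lmp(t_A)$ and rightmost path $\rmp(t_A)$; in particular $|\lmp(S'_A)| = |\rmp(S'_A)| = 1$, so $A \in \bar{\al}_m(S')$ (this confirms the claim in the paragraph preceding the proposition). I would prove the ``confinement at alphabet $A$'' direction by noting that $A$ is minimal in $\bar{\al}(S\cup D) = \bar{\al}(S')$, so no product of elements of $S\cup D$ has alphabet $A$, and any repleteness-forced tree $xuvy$ with alphabet $A$ has $xuy,xvy$ of alphabet $A$ as well (their alphabets lie in $\bar{\al}(S')$ by Remark \ref{rem:repletealphabets}, are contained in $A$, hence equal $A$ by minimality); induction on the stage of closure then keeps everything of alphabet $A$ inside $\langle\{t_A\}\rangle_r$. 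For the reverse, it suffices by Corollary \ref{cor:repleteintersect} to show that $S \cup \bigcup_{A\in E}\langle\{t_A\}\rangle_r$ is already replete, which by Lemma \ref{lem:uniformrepleteness} reduces to repleteness of each of its uniform parts — each is either some $S_B$ ($B\notin E$, replete since $S$ is) or some $\langle\{t_A\}\rangle_r$ (replete by Proposition \ref{prop:repletepaths}) — once one knows this union is a subsemigroup of $T_n$.

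That last subsemigroup check is where the real work lies and is the main obstacle: one must rule out the closure ``leaking'' into other alphabets when the uniform part at some $A\in E$ is inflated from $\{t_A\}$ to $\langle\{t_A\}\rangle_r$. The saving grace is branch invariance: for $t'_A \in \langle\{t_A\}\rangle_r$ and $s \in S$, Lemma \ref{lem:astseq} and its left-hand dual show that $t'_A\cdot s$ and $t_A\cdot s$ have the same leftmost and rightmost paths, because $t'_A$ and $t_A$ do; since $t_A\cdot s \in S$ (as $D\cup S$ is a subsemigroup and $\al(t_A s)\notin E$, being strictly above the minimal element $A$) and $S$ is replete, Theorem \ref{thm:uniformcount} and Proposition \ref{prop:repletepaths} force every tree with those branches into $S_{\al(t_A s)}$, in particular $t'_A\cdot s$. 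The products $s\cdot t'_A$ and $t'_A\cdot t'_{A'}$ (for $A\neq A'$ in $E$) are handled identically, and $t'_A\cdot t''_A$ stays in $\langle\{t_A\}\rangle_r$ by Corollary \ref{cor:product}. Together with the cross-alphabet stability already recorded in Remark \ref{rem:stability}, this closes the subsemigroup check.

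Finally I would exhibit the inverse map and close both loops. Given $(S',E,(t_A)_{A\in E},p')$, set $D := \{t_A \mid A\in E\}$ (sparse because $E$ is an antichain of minimal alphabets), $S := \{s\in S' \mid \al(s)\notin E\}$, and let $p$ extend $p'$ by $1$ on $E$ and $0$ elsewhere. I would check: $S$ is a replete subsemigroup — each $(x,y)$-factor $x\backslash S/y$ is the intersection of $x\backslash S'/y$ with $\{u \mid \al(xuy)\notin E\}$, and both sets are closed under products by minimality of the elements of $E$; $S$ dominates $D$ — the three conditions of Definition \ref{defn:complementary} follow from $E\subseteq\bar{\al}_m(S')$ together with $\bar{\al}(S) = \bar{\al}(S')\setminus E$; and $p$ is a valid parity function. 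The composite one way returns the original tuple precisely because $S'_A = \langle\{t_A\}\rangle_r$ whenever $A\in\bar{\al}_m(S')$ — this is the one place where the hypothesis $E\subseteq\bar{\al}_m(S')$, rather than mere minimality of the alphabets in $E$, is essential — and the other way it returns $(S,D,p)$ by the structural description of $\langle S\cup D\rangle_r$ above. This establishes the bijection and hence \eqref{eq:Rn2}.
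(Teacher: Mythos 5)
Your proposal is correct and takes essentially the same route as the paper: the paper's (much terser) proof likewise counts complementary triples $(S,D,p)$ grouped by the replete subsemigroup $S' = \langle S \cup D\rangle_r$ and by $E = \bar{\al}(D) \subseteq \bar{\al}_m(S')$, with the power of $2$ counting parity functions and the inner product counting the choices of $D$ via the square of the formula of Lemma \ref{lem:repletebranchset}. Your write-up simply supplies the structural details the paper leaves implicit, namely the decomposition $\langle S\cup D\rangle_r = S \cup \bigcup_{A\in E}\langle\{t_A\}\rangle_r$ and the explicit inverse of the grouping map.
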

\begin{proof}
The inner sum, indexed by $\bar{\alpha}_m(S')$, counts the possible pairs $(S,D)$ such that $S$ dominates $D$ and $\langle S \cup D \rangle = S'$, grouped according to $E = \al(D)$. The power of $2$ counts the number of partition functions, while the product is the number of possible choices of $D$, using (the square of) the formula of Lemma \ref{lem:repletebranchset}.
\end{proof}

To reorganize this sum into a form compatible with our earlier counts of replete subsemigroups of $T_2$ and $T_3$, consider a replete subsemigroup $S'$ containing the trivial tree. Here $\bar{\alpha}_m(S') = \{\emptyset\}$, and we extract the summand,
\[2^{|\bar{\al}(S)|} + 2^{\left(|\bar{\al}(S)|-1\right)} = 3 \cdot 2^{\left(|\bar{\al}(S)|-1\right)}.\]
As such, we may rewrite \eqref{eq:Rn2} as a sum over replete subsemigroups $S'$ not containing the trivial tree (adding $1$ to the exponents from the previous expression):
\begin{equation}
\label{eq:Rn3}
\sum_{\substack{\text{replete }S \leq T_n \\ \emptyset \notin \bar{\al}(S)}} \left[ 3 \cdot 2^{|\bar{\al}(S)|} + \sum_{E \subseteq \bar{\alpha}_m(S)} 2^{\left(|\bar{\al}(S)|-|E|\right)} \prod_{A \in E} \prod_{j=1}^{|A|-1} j^{2^{(|A|-j+1)}-2} \right].
\end{equation}
Conveniently, the product appearing in this expression,
\[Q_E:= \prod_{A \in E} \prod_{j=1}^{|A|-1} j^{2^{(|A|-j+1)}-2},\]
always evaluates to $1$ when the number of generators $n \leq 3$ but for the exceptional case $[3] \in E$, in which case it evaluates to $4$. This can only occur when $S$ is a minimal uniform replete semigroup of height $3$.

\begin{exa}
\label{exa:R2count}
By Lemma \ref{lem:T2replete}, the $42$ subsemigroups of $T_2$ are all replete. Combined with the observations above, it follows that for $n =2$, \eqref{eq:Rn3} simplifies to the following formula:
\[|R_2|= \sum_{\substack{S \leq T_2 \\ 1 \notin S}} \left[ 3 \cdot 2^{|\bar{\al}(S)|} + \sum_{E \subseteq \bar{\alpha}_m(S)} 2^{\left(|\bar{\al}(S)|-|E|\right)} \right].\]
Tabulating the count of these subsemigroups as in Example \ref{exa:T2subsemi} we arrive at Table \ref{table:R2count0}.
\begin{table}[ht]
\centering
\DivWidth{\colOne}{$\{\{a,b\}\}$}{2}
\begin{tabu}{ l |c|c|C{\colOne}|C{\colOne}|c|c|}
$\bar{\al}(S)$ & $\emptyset$ & $\{\{a\}\}$ & \multicolumn{2}{c|}{$\{\{a,b\}\}$} & $\{\{a\},\{a,b\}\}$ & $\{\{a\},\{b\},\{a,b\}\}$\\
\hline
Factor     & $1$ & $2$ & \multicolumn{2}{c|}{$1$} & $2$ & $1$ \\
\hline
Replete $S$  & $1$ & $1$ & $4$ & $5$ & $4$ & $1$ \\
$3 \cdot 2^{|\bar{\al}(S)|}$ & $3$ & $6$ & $6$ & $6$ & $12$ & $24$\\
$\sum 2^{\left(|\bar{\al}(S)|-|E|\right)}$ & $1$ & $3$ & $3$ & $2$ & $6$ & $18$ \\
\hline
Contribution & $4$ & $18$ & \multicolumn{2}{c|}{$76$} & $144$ & $42$
\end{tabu}
\caption{Counting the number of elements of $R_2$ using the simplified version of \eqref{eq:Rn3}. Note that in the third column we must distinguish the $4$ cases where $S$ is a singleton from the remaining cases where $S$ contains two or more elements for the definition of $\bar{\al}_m(S)$.}
\label{table:R2count0}
\end{table}
Summing the entries in the final row, we recover
\[|R_2| = 284.\]
\end{exa}

The case splitting in Example \ref{exa:R2count} is unavoidable. However, having enumerated all of the replete subsemigroups of $T_3$ explicitly (via their branch sets, in Table \ref{table:T3count}), it suffices to group the subsemigroups for each alphabet by the size of $\bar{\al}_m(S)$. This is the main challenge impjicit in the calculations of the following example.


%
%
%

\begin{exa}
\label{exa:R3count}
To compute $|R_3|$, we exploit the additional detail included in Table \ref{table:T3count}, keeping the above simplifications of the calculation in mind. We record the calculation in Table \ref{table:R3count}. The hardest calculation involved in generating this table is the case $\bar{\al}(S) = \{\{a,b\},\{b,c\},\{a,c\},\{a,b,c\}\}$, where we must separate the cases where none, one, two or three of the minimal elements are members of $\bar{\al}_m(S)$. Summing the contributions from the different alphabets, we obtain:
\[ |R_3| = 510605. \]
\end{exa}

\begin{remark}
\label{rem:hardcount}
Our strategy in this section has been to exploit our existing count of replete semigroups from earlier, since this seemed simpler than determining the number of replete subsemigroups which dominate a given sparse set. We expect further study of the algebras of rightmost paths should reveal a more systematic understanding of this relationship.

However, even if this is possible, there is good reason to believe that combinatorial tools capable of precisely enumerating the elements of $|R_n|$ for $n$ larger than $8$ or $9$ do not yet exist (even with extensive computational power). Indeed, just counting the sparse sets should be at least as challenging as counting their alphabets, and these are (by definition) antichains in $\Pcal([n])$. The problem of finding a closed form expression for the number of such antichains is exactly \textit{Dedekind's problem}, which has been open for over $125$ years, Kisielewicz' arithmetic presentation of the problem \cite{Kisielewicz} as a sum in which every subset of $\Pcal([n])$ provides a summand evaluating to $0$ or $1$ notwithstanding. The $n = 9$ case was only computed with significant computational resources in 2023, independently by J\"{a}kel \cite{Jakel} and Van Hirtum \textit{et al} \cite{SuperDedekind}.

Nonetheless, there is scope to lift the methods used by Kleitman and Markowsky to obtain asymptotic bounds on Dedekind numbers \cite{Dedekind} to improve the rudimentary bounds of Lemma \ref{lem:Rnfinite} and Example \ref{exa:R1}.
\end{remark}

\begin{landscape}
\begin{table}[ht]
\centering
\begin{tabu}{ l |c|c|c|c|c|c|}
$\bar{\al}(S)$ & $\emptyset$ & $\{\{a\}\}$ & \multicolumn{2}{c|}{$\{\{a,b\}\}$} & $\{\{a\},\{a,b\}\}$ & $\{\{a\},\{b\},\{a,b\}\}$\\
\hline
Factor     & $1$ & $3$ & \multicolumn{2}{c|}{$3$} & $6$ & $3$ \\
\hline
Replete $S$  & $1$ & $1$ & $4$ & $5$ & $4$ & $1$ \\
$3 \cdot 2^{|\bar{\al}(S)|}$ & $3$ & $6$ & $6$ & $6$ & $12$ & $24$\\
$\sum 2^{\left(|\bar{\al}(S)|-|E|\right)}$ & $1$ & $3$ & $3$ & $2$ & $6$ & $18$ \\
\hline
Contribution & $4$ & $27$ & $108$ & $120$ & $432$ & $126$
\end{tabu}
\\
\vspace{6pt}
\DivWidth{\colOne}{$12\{\{a,b\}\}12$}{2}
\begin{tabu}{ l |c|c|c|c|c|c|c|c|c|c|c|c|c|}
$\bar{\al}(S) \backslash \{[3]\}$
& \multicolumn{2}{c|}{$\emptyset$} & $\{\{a\}\}$ & \multicolumn{2}{c|}{$\{\{a,b\}\}$}
& $\{\{a\},\{a,b\}\}$ & \multicolumn{2}{c|}{$\{\{a\},\{b,c\}\}$} & \multicolumn{3}{c|}{$\{\{a,b\},\{b,c\}\}$}
& $\{\{a\},\{b\},\{a,b\}\}$
\\
\hline
Factor
& \multicolumn{2}{c|}{$1$}         & $3$         & \multicolumn{2}{c|}{$3$}
& $6$                 & \multicolumn{2}{c|}{$3$}                 & \multicolumn{3}{c|}{$3$}
& $3$ \\
\hline
Replete $S$
& $36$  & $448$                     & $144$      & $324$   & $205$
& $196$               & $36$   & $13$                              & $196$   & $256$   & $77$
& $25$ \\
$3 \cdot 2^{|\bar{\al}(S)|}$
& $6$   & $6$                       & $12$       & $12$    & $12$
& $24$                & $24$   & $24$                              & $24$    & $24$    & $24$
& $48$  \\
$\sum 2^{\left(|\bar{\al}(S)|-|E|\right)}Q_E$
& $10$  & $2$                       & $6$        & $6$     & $4$ 
& $12$                & $18$   & $12$                              & $18$    & $12$    & $8$
& $36$ \\
\hline
Contribution
& $576$ & $3584$                    & $7776$     & $17496$ & $9840$
& $42336$             & $4536$ & $1404$                              & $24696$ & $27648$ & $7392$
& $6300$
\end{tabu}
\\
\vspace{6pt}
\DivWidth{\colOne}{$\{\{a\},\{a,b\},\{b,c\}\}$}{2}
\begin{tabu}{ l |C{\colOne}|C{\colOne}|c|c|c|c|c|c|}
$\bar{\al}(S) \backslash \{[3]\}$ & \multicolumn{2}{c|}{$\{\{a\},\{a,b\},\{b,c\}\}$} & $\{\{b\},\{a,b\},\{b,c\}\}$ & $\{\{a\},\{b\},\{a,b\},\{b,c\}\}$ & \multicolumn{4}{c|}{$\{\{a,b\},\{b,c\},\{a,c\}\}$} \\
\hline
Factor          & \multicolumn{2}{c|}{$6$}
                & $3$  & $6$ & \multicolumn{4}{c|}{$1$}\\
\hline
Replete $S$ & $81$  & $40$   & $169$ & $16$  & $196$           & $495$             & $450$           & $155$ \\
$3 \cdot 2^{|\bar{\al}(S)|}$ & $48$ & $48$ & $48$ & $96$ & $48$ & $48$ & $48$ & $48$ \\
$\sum 2^{\left(|\bar{\al}(S)|-|E|\right)}$ & $36$ & $24$ & $24$ & $72$ & $54$ & $36$ & $24$ & $16$ \\
\hline
Contribution & $40824$ & $17280$ & $36504$ & $16128$ & $19992$ & $41580$ & $32400$ & $9920$
\end{tabu}
\\
\vspace{6pt}
\DivWidth{\colOne}{$\{\{a\},\{a,b\},\{b,c\},\{a,c\}\}$}{2}
\begin{tabu}{ l |C{\colOne}|C{\colOne}|c|c|}
$\bar{\al}(S) \backslash \{[3]\}$ & \multicolumn{2}{c|}{$\{\{a\},\{a,b\},\{b,c\},\{a,c\}\}$} & $\{\{a\},\{b\},\{a,b\},\{b,c\},\{a,c\}\}$ & all \\
\hline
Factor          & \multicolumn{2}{c|}{$3$} & $3$ & $1$ \\
\hline
Replete $S$ & $144$           & $112$    & $25$     & $1$  \\
$3 \cdot 2^{|\bar{\al}(S)|}$ & $96$ & $96$ & $192$ & $384$ \\
$\sum 2^{\left(|\bar{\al}(S)|-|E|\right)}$ & $72$ & $48$ & $72$ & $432$ \\
\hline
Contribution & $72576$ & $48384$ & $19800$ & $816$
\end{tabu}
\caption{An accounting of the complementary triples $(S,D,p)$ tabulated according to the replete subsemigroup $\langle S \cup D \rangle_r$ they generate. The first table accounts for the cases where $[3] \notin \bar{\al}(S)$, while the remaining tables cover $[3] \in \bar{\al}(S)$. As before, the `Factor' row accounts for the alphabets obtained by permuting the generators. The third row of each table is generated from Table \ref{table:T3count}, with columns subdivided according to the corresponding value of $\bar{\al}_m(S)$. In the second table, the product $Q_E$ is $4$ when $E = \{[3]\}$ and is $1$ otherwise, so we omit this factor from the other tables.}
\label{table:R3count}
\end{table}
\end{landscape}

\clearpage

\subsection{The mirig of complementary triples}
\label{ssec:ops}

Just as in Section \ref{ssec:trees} the bijection of Theorem \ref{thm:complements} induces a mirig structure on $C_n$. For completeness, we would like to make the translated rig operations explicit, but this requires some extra work, since we have not yet examined how these operations are realised at the level of maximal thickets.

Observe that if $f$ is a maximal thicket and $g \approx f$, then equipping $\Nbb_{2,2}$ with the preorder inherited from $\Nbb$ we have the apparity inequality $P(f_A) \geq P(g_A)$ for each $A$. In fact we have the stronger properties that that $P(g_A) = 0$ implies $P(f_A) = 0$ or $2$; $P(g_A) = 2$ implies $P(f_A) =2$, and so on. In particular, each expansion of $g$ adds $2$ to the parity of some $g_A$, so an expansion cannot create new stragglers. As such, the stragglers of $f$ are contained in the stragglers of $g$.

\begin{lem}
\label{lem:prodstragglers}
Let $f,f',f''$ be maximal thickets such that $f'' \approx ff'$. Then ${f''}^{\mathrm{str}}$ consists of those $tt' \in {f}^{\mathrm{str}}{f'}^{\mathrm{str}}$ such that for all $s \in f$, $s' \in f'$, $\al(ss') \subseteq \al(tt')$ implies $s=t$ and $s'=t'$.
\end{lem}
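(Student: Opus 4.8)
The plan is to characterize $\supp({f''}^{\mathrm{str}})$ by passing through the raw product thicket $g:=ff'$ and exploiting that the apparity function is a rig homomorphism, so it behaves well on uniform components. First I would record the shape of $g$: writing $c_f(s),c_{f'}(s')\in\Nbb_{2,2}$ for coefficients, the uniform component on an alphabet $A$ is $g_A=\sum_{\al(s)\cup\al(s')=A}c_f(s)c_{f'}(s')\,(s\ast s')$, and hence $P(g_A)=\sum_{\al(s)\cup\al(s')=A}c_f(s)c_{f'}(s')$ computed in $\Nbb_{2,2}$. Every summand here is $\geq 1$ (since $s\in f$, $s'\in f'$), so $P(g_A)=1$ is equivalent to there being a unique pair $(t,t')$ with $\al(t)\cup\al(t')=A$, that pair moreover satisfying $c_f(t)=c_{f'}(t')=1$; and then $t$ must be the only tree of $f$ with alphabet $\al(t)$ (a second such tree would produce a second admissible pair), so $P(f_{\al(t)})=1$, whence $t\in\supp(f^{\mathrm{str}})$ by Lemma \ref{lem:lonelyhypothesis} (using maximality of $f$), and symmetrically $t'\in\supp({f'}^{\mathrm{str}})$.

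Next I would handle the alphabet bookkeeping. An expansion $x(u+v)y\mapsto x(u+v+uv+vu)y$ inserts trees of alphabet $\al(xuy)\cup\al(xvy)$, a union of alphabets already present; therefore $\bar\al(\supp f'')$ is the closure of $\bar\al(\supp g)$ under unions, so $g$ and $f''$ have the same minimal alphabets, and $\bar\al(\supp g)$ is the set of pairwise unions of members of $\bar\al(\supp f)$ and $\bar\al(\supp f')$. Consequently, if $t\in\supp(f^{\mathrm{str}})$, $t'\in\supp({f'}^{\mathrm{str}})$ satisfy the stated condition, then setting $A:=\al(t)\cup\al(t')$ there is no pair $(s,s')$ with $\al(ss')\subsetneq A$, so $A$ is minimal in $\bar\al(\supp g)$ and hence in $\bar\al(\supp f'')$, and by the first paragraph $g_A=t\ast t'$ with coefficient $1$.

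The crux is to upgrade $P(g_A)=1$ to $P(f''_A)=1$, i.e.\ to rule out $P(f''_A)=3$ by showing no expansion sequence starting at $g$ ever alters the component on the minimal alphabet $A$. I would prove by induction on the length of an expansion sequence that every thicket $h$ reachable from $g$ has $h_A=t\ast t'$ with coefficient $1$: an expansion changes component $A$ only if it inserts trees of alphabet exactly $A$, and since the inserted alphabet is $\al(xuy)\cup\al(xvy)$ with both $\al(xuy),\al(xvy)\in\bar\al(\supp h)\subseteq\bar\al(\supp f'')$ and $A$ minimal there, this forces $\al(xuy)=\al(xvy)=A$, so the expansion requires two trees of alphabet $A$ (counted with multiplicity) present in $h$ — contradicting the inductive hypothesis that $h_A$ is a single tree with coefficient $1$. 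Given this, $P(f''_A)=1$, so $f''_A=t\ast t'$ with coefficient $1$; as $A$ is minimal, $t\ast t'\in\supp({f''}^{\mathrm{str}})$, giving the inclusion $\supseteq$. For $\subseteq$: a straggler $u$ of $f''$ is, by the discussion preceding the statement, already a straggler of $g$, so $P(g_{\al(u)})=1$; the first paragraph yields $t\in\supp(f^{\mathrm{str}})$, $t'\in\supp({f'}^{\mathrm{str}})$ with $u=t\ast t'$, and minimality of $\al(u)$ in $\bar\al(\supp g)$ forces every pair $(s,s')$ with $\al(ss')\subseteq\al(u)$ to have $\al(ss')=\al(u)$, hence $(s,s')=(t,t')$ by uniqueness, which is exactly the stated condition.

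I expect the induction in the third paragraph to be the main obstacle: the rest is routine tracking of the fact that apparity is multiplicative in coefficients and additive in alphabets, whereas this step genuinely combines the minimality of $A$ with the observation that the only mechanism for increasing a uniform component is an internal expansion whose output already lives on that alphabet.
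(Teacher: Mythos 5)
Your argument is correct and follows essentially the same route as the paper: it restricts the stragglers of $f''$ to the product ${f}^{\mathrm{str}}{f'}^{\mathrm{str}}$ via the observation preceding the lemma, gets necessity of the condition from the existence of a second summand of $ff'$ with contained alphabet, and gets sufficiency from minimality of $\al(tt')$ together with the fact that expansions cannot disturb a lonely coefficient-$1$ tree on a minimal alphabet. Your third-paragraph induction is simply an explicit version of the paper's assertion that ``any expansion of $ff'$ necessarily leaves $tt'$ lonely,'' so the two proofs coincide in substance.
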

\begin{proof}
By the preceding observation, the stragglers of $f''$ are amongst the stragglers of
\[ff' = ({f}^{\mathrm{str}} + {f}^{\mathrm{w/o}})({f'}^{\mathrm{str}} + {f'}^{\mathrm{w/o}}),\]
which in turn are contained in the first factor, ${f}^{\mathrm{str}}{f'}^{\mathrm{str}}$, since all other summands have coefficients $2$ or $3$ already. The only way that a tree $tt'$ can fail to be a straggler of the product is if there is another summand of $ff'$ (formed via a distinct product) whose alphabet is contained in that of $tt'$, whence the given condition is necessary. Conversely, if $tt'$ is lonely and $\al(tt')$ is minimal in $\bar{\al}(\supp(ff'))$ then any expansion of $ff'$ necessarily leaves $tt'$ lonely, so $tt'$ will be a straggler in $f''$, as required.
\end{proof}

\begin{lem}
\label{lem:sumstragglers}
Let $f,f',f''$ be maximal thickets such that $f'' \approx f+f'$. Then ${f''}^{\mathrm{str}}$ consists of those $t \in {f}^{\mathrm{str}}$ such that for all $t' \in f'$, $\al(t') \not\subseteq \al(t)$, alongside those $t' \in {f'}^{\mathrm{str}}$ such that for all $t \in f$, $\al(t) \not\subseteq \al(t')$.
\end{lem}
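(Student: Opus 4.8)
The plan is to mirror the proof of Lemma~\ref{lem:prodstragglers}, with the product there replaced by the sum here. First I would use the observation preceding Lemma~\ref{lem:prodstragglers}: since $f''$ is maximal and $f'' \approx f+f'$, every straggler of $f''$ is already a straggler of $f+f'$; moreover, as the set of stragglers is the same for any maximal representative (Corollary~\ref{cor:unstraggled}) and every thicket expands to a maximal one (proof of Proposition~\ref{prop:confluent}), I may take $f''$ to be obtained from $f+f'$ by a chain of expansions. Next I would pin down the stragglers of $f+f'$ directly: since addition in $\Nbb_{2,2}$ produces the value $1$ only as $0+1$, a tree has coefficient $1$ in $f+f'$ exactly when it lies in $\supp(f^{\mathrm{str}})$ but not in $\supp(f')$, or symmetrically; such a $t\in f^{\mathrm{str}}$ is in addition lonely in $f+f'$ precisely when $\al(t)\notin\bar{\al}(f')$. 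So the stragglers of $f+f'$ are the $t\in f^{\mathrm{str}}$ with $\al(t)\notin\bar{\al}(f')$, together with the symmetric family from ${f'}^{\mathrm{str}}$.

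For the inclusion ${f''}^{\mathrm{str}}\subseteq$ (claimed set), I would show that such a straggler $t\in f^{\mathrm{str}}$ of $f+f'$ is destroyed whenever some $t'\in f'$ has $\al(t')\subsetneq\al(t)$. Indeed the single expansion $t+t'\mapsto t+t'+tt'+t't$ then applies to $f+f'$, and since $\al(t')\subseteq\al(t)$ the new trees $tt'$ and $t't$ both have alphabet $\al(t)$; the result being a summand of $f''$, either $t$ is no longer lonely in $f''$, or (the case $tt'=t't=t$, which by Corollary~\ref{cor:product} requires $t'$ to be simultaneously a left and a right factor of $t$) the coefficient of $t$ has been pushed into $\{2,3\}$; either way $t\notin{f''}^{\mathrm{str}}$. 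Combined with $\al(t)\notin\bar{\al}(f')$, the condition ``no $t'\in f'$ with $\al(t')\subseteq\al(t)$'' is exactly what the lemma claims, and the $f'$--side is symmetric.

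For the reverse inclusion I would establish the auxiliary fact that expansions cannot destroy a ``protected'' straggler: if $g$ is a thicket, $t$ is lonely in $g$ with coefficient $1$, and $\al(t)$ is minimal in $\bar{\al}(\supp(g))$, then any expansion of $g$ leaves $t$ lonely with coefficient $1$ and keeps $\al(t)$ minimal. The reason is that an expansion only adjoins trees of alphabet $\al(a)\cup\al(b)$ for summands $a,b$ of $g$; were this union contained in $\al(t)$, minimality would force $\al(a)=\al(b)=\al(t)$, hence $a=b=t$ by loneliness, which would require $t$ to have coefficient at least $2$ --- impossible. So no tree of alphabet $\subseteq\al(t)$ is ever created, $t$'s coefficient is untouched, and minimality persists; iterating along the expansion chain to $f''$ keeps $t$ a straggler. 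To apply this, I check that a straggler $t\in f^{\mathrm{str}}$ with $\al(t')\not\subseteq\al(t)$ for all $t'\in f'$ has $\al(t)$ minimal in $\bar{\al}(\supp(f+f'))=\bar{\al}(f)\cup\bar{\al}(f')$: no proper subset comes from $\bar{\al}(f)$ by Lemma~\ref{lem:lonelyhypothesis} applied to the maximal $f$, and none from $\bar{\al}(f')$ by hypothesis. The symmetric argument finishes the $f'$--side.

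The main obstacle I anticipate is the auxiliary fact on preservation of loneliness under a single expansion, and within the ``destroyed'' direction the careful handling of the degenerate cases $tt'=t$ or $t't=t$, where the expansion inflates the coefficient of $t$ rather than obviously breaking loneliness; both hinge on the explicit form of products $st$ with $\al(s)\subseteq\al(t)$ given by Corollary~\ref{cor:product}.
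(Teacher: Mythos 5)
Your proposal is correct and follows essentially the same route as the paper's proof: stragglers of $f''$ are traced back through the stragglers of $f+f'$ to those of $f$ and $f'$, necessity comes from the expansion destroying a straggler whenever some summand of the other thicket has alphabet contained in its own, and sufficiency uses minimality of the alphabet to show expansions preserve loneliness, exactly as in the ``analogous argument'' to Lemma \ref{lem:prodstragglers}. Your treatment is simply more explicit than the paper's (notably the degenerate case $tt'=t't=t$ and the preservation-under-expansion fact), but it is the same argument.
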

\begin{proof}
Once again, the stragglers of $f''$ are amongst the stragglers of $f+f'$, which in turn are contained in the union of the stragglers of $f$ and those of $f'$. The only way that a straggler $t$ of $f$ can fail to be a straggler of the sum is if there is another summand of $f+f'$ whose alphabet is contained in that of $t$, which must necessarily come from $f'$, whence the given conditions are necessary; they are sufficient by an analogous argument to that for Lemma \ref{lem:prodstragglers}.
\end{proof}

For both the product and the sum we can only recover the replete subsemigroups $\supp({f''}^{\mathrm{w/o}})$ by closing under the requirements of Definition \ref{defn:complementary}.

\begin{prop}
\label{prop:dompair}
Let $D,U \subseteq T_n$ be disjoint subsets such that $D$ is sparse and $\al(s) \not\subseteq \al(t)$ whenever $s \in U$ and $t \in D$. Then the replete subsemigroup $\langle U \rangle_r$ generated by $U$ (see Corollary \ref{cor:repleteintersect}) dominates $D$.
\end{prop}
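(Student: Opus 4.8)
Write $S := \langle U\rangle_r$. By Corollary \ref{cor:repleteintersect} this is a replete subsemigroup, and $D$ is sparse by hypothesis, so what remains is to verify the three bulleted conditions of Definition \ref{defn:complementary}: that $\bar{\al}(D)\cap\bar{\al}(S)=\emptyset$, that $D\cup S$ is a subsemigroup of $T_n$, and that $\al(t)$ is minimal in $\bar{\al}(D\cup S)$ for every $t\in D$.

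The first step is to identify $\bar{\al}(S)$. Since $\al$ sends a product of trees to the union of their alphabets, the ordinary subsemigroup $\langle U\rangle$ generated by $U$ has $\bar{\al}(\langle U\rangle)$ equal to the closure of $\{\al(s)\mid s\in U\}$ under binary union in $\Pcal([n])$, and by Remark \ref{rem:repletealphabets} passing to the replete closure does not enlarge this; hence $\bar{\al}(S)$ is precisely the set of non-empty finite unions of the sets $\al(s)$, $s\in U$. In particular every member of $\bar{\al}(S)$ contains some $\al(s)$ with $s\in U$, so no member of $\bar{\al}(S)$ can be contained in any $\al(t)$ with $t\in D$ (this would give $\al(s)\subseteq\al(t)$, contradicting the hypothesis). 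This yields the first condition immediately. For the third, sparseness of $D$ makes $\bar{\al}(D)$ an antichain, so $\al(t)$ is the only element of $\bar{\al}(D)$ contained in $\al(t)$, and we have just seen there is no element of $\bar{\al}(S)$ contained in $\al(t)$; hence $\al(t)$ is minimal in $\bar{\al}(D\cup S)=\bar{\al}(D)\cup\bar{\al}(S)$. (If $D=\emptyset$ all three conditions hold vacuously, while if $D$ is inhabited the hypothesis already forces $()\notin U$.)

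The substantive point, and the one I expect to be the main obstacle, is that $D\cup S$ is a subsemigroup. This needs the extra input that $D\cup U$ is itself a subsemigroup of $T_n$; without some such assumption the conclusion genuinely fails --- for instance, with $U=\emptyset$ and $D=\{(a),(b)\}\subseteq T_2$ one has $\langle U\rangle_r=\emptyset$, which does not dominate $D$ because $D$ is not closed under products --- so I read it as implicit in the intended setting, where $D$ and $U$ arise from combining the supports of two complementary triples (the remaining hypotheses being supplied there via Lemmas \ref{lem:prodstragglers}, \ref{lem:sumstragglers} and \ref{lem:lonelyhypothesis}). Granting it, products of two elements of $S$ stay in $S$, and any product $xy$ with $x,y\in D$ or with one factor in $D$ and the other in $U$ lies in $D\cup U$; the only case left is $x\in D$ (or symmetrically $y\in D$) with the other factor in $S\setminus U$.

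For that case one first checks that $\al(xy)\in\bar{\al}(S)$: since $\al(x)\in\bar{\al}(D)$, $\al(y)\in\bar{\al}(S)$ and $\al(y)\not\subseteq\al(x)$ (otherwise a witnessing $\al(s)$ with $s\in U$ would satisfy $\al(s)\subseteq\al(x)$), the set $\al(xy)=\al(x)\cup\al(y)$ lies in the union-closed family $\bar{\al}(D\cup U)$ and strictly contains the antichain element $\al(x)\in\bar{\al}(D)$, hence lies in $\bar{\al}(D\cup U)\setminus\bar{\al}(D)=\bar{\al}(U)\subseteq\bar{\al}(S)$. It then remains to see that the tree $xy$ itself belongs to $S$, and the plan for this is an induction over the stages of the construction of $\langle U\rangle_r$ from $U$ (which alternately closes under products and under the rule that $x'uy',x'vy'\in S$ imply $x'uvy'\in S$): one shows that at every stage $D$ together with the current approximation is still closed under products, the crucial point being that minimality of the alphabets $\al(t)$, $t\in D$, prevents the ``$D$-layer'' from ever being touched by these closure operations, while the fibrewise characterisation of repleteness (Lemma \ref{lem:uniformrepleteness} with Theorem \ref{thm:uniformcount}) and the compositionality $x'\backslash(x\backslash W/y)/y'=xx'\backslash W/y'y$ of factors let one reduce membership of $xy$ to an earlier stage. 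The bookkeeping here --- in particular controlling the left and right branches of $xy$ through the product algorithm of Lemma \ref{lem:sastt} --- is the delicate part.
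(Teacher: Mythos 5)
Your first two paragraphs coincide with the entirety of the paper's own proof: the paper checks only the two alphabet conditions of Definition \ref{defn:complementary} (alphabets of products of trees are unions of alphabets, so the subsemigroup generated by $U$ still satisfies $\al(s)\not\subseteq\al(t)$ for $t\in D$, and by Remark \ref{rem:repletealphabets} passing to $\langle U\rangle_r$ creates no new alphabets), exactly as you do, and then stops. So on the first and third bullets you and the paper agree, and your argument there is correct.

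Where you diverge is your observation that the second bullet --- that $D\cup\langle U\rangle_r$ be a subsemigroup of $T_n$ --- does not follow from the stated hypotheses. That observation is correct: your counterexample $U=\emptyset$, $D=\{(a),(b)\}$ is valid (and an inhabited variant such as $U=\{(c)\}$, $D=\{(a),(b)\}$ in $T_3$ works too, since $\bar{\al}(\langle U\rangle_r)=\{\{c\}\}$ excludes $\tr(ab)$), and the paper's one-line proof indeed never addresses this bullet, implicitly relying on the situation of Corollary \ref{cor:domops} where $D$ and $U$ arise from supports of thickets. However, your own treatment does not repair this: after adding the unstated hypothesis that $D\cup U$ is a subsemigroup, the decisive case $x\in D$, $y\in\langle U\rangle_r\setminus U$ is only reduced to the claim $\al(xy)\in\bar{\al}(\langle U\rangle_r)$, which says nothing about membership of the tree $xy$ itself; the actual assertion $xy\in\langle U\rangle_r$ is deferred to a sketched induction over the stages of the replete closure that you never carry out, and whose inductive step (that each stage stays closed under multiplication by elements of $D$, presumably via the fibrewise argument of Lemma \ref{lem:uniformrepleteness} and Remark \ref{rem:stability}) is precisely the hard content. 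So the proposal correctly flags a gap in the statement and in the paper's proof, but it does not close it: the subsemigroup condition is established by neither you nor the paper, and your proof is incomplete at exactly the point where it goes beyond the paper's argument.
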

\begin{proof}
Clearly the subsemigroup generated by $U$ also satisfies the condition on the alphabets, and by Remark \ref{rem:repletealphabets} generating the replete subsemigroup $\langle U \rangle_r$ doesn't create trees with new alphabets, whence it dominates $D$, as required.
\end{proof}

\begin{cor}
\label{cor:domops}
Let $f,f',f''$ be maximal thickets such that $f'' \approx ff'$, let $D := \supp({f''}^{\mathrm{str}})$ as computed in Lemma \ref{lem:prodstragglers} and let $U := \supp(ff') \backslash D$. Then $\supp({f''}^{\mathrm{w/o}}) = \langle U \rangle_r$. If instead $f'' \approx f+f'$, $D$ is the subset computed in Lemma \ref{lem:prodstragglers} and $U := \supp(f+f') \backslash D$ then the corresponding identity holds.
\end{cor}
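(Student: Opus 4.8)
The plan is to prove the statement for the product; the sum case is entirely analogous, reading Lemma \ref{lem:sumstragglers} for Lemma \ref{lem:prodstragglers} and $f+f'$ for $ff'$ throughout. Write $S := \supp({f''}^{\mathrm{w/o}})$, so that $\supp(f'') = S \sqcup D$ (disjoint, since stragglers are lonely with minimal alphabet, so no alphabet in $\bar{\al}(D)$ lies in $\bar{\al}(S)$); the goal is $S = \langle U\rangle_r$.

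I will make two preliminary reductions. First, by Lemmas \ref{lem:suppinvariant} and \ref{lem:fixedparity} and Corollary \ref{cor:unstraggled}, any two maximal thickets in a fixed ${\approx}$-class have the same support, the same stragglers, and hence the same $\supp(\cdot^{\mathrm{w/o}})$; since by Proposition \ref{prop:confluent} the maximal representative of $[f'']$ is reachable from $ff'$ by expansions, I may assume that $f''$ is itself obtained from $ff'$ by a chain of expansions $ff' = h_0 \to h_1 \to \cdots \to h_m = f''$. Second, by Lemma \ref{lem:prodstragglers} each element of $D$ lies in $\supp({f}^{\mathrm{str}}{f'}^{\mathrm{str}}) \subseteq \supp(ff')$, so $\supp(ff') = U \sqcup D$; in particular $U \subseteq \supp(f'')\setminus D = S$, and as $S$ is replete (Lemma \ref{lem:inverseimagemonoids}) this already gives $\langle U\rangle_r \subseteq S$, the easy half of the claim.

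For the reverse inclusion, note first that $\langle U\rangle_r$ dominates $D$: the set $D$ is sparse and $\al(s) \not\subseteq \al(t)$ for all $s \in U$, $t \in D$, both because $\al(t)$ is minimal in $\bar{\al}(\supp f'')$ and $t$ is the only tree of $\supp(f'')$ with that alphabet, while $U \cap D = \emptyset$; hence Proposition \ref{prop:dompair} applies. The crux is the following closure property: whenever $xpy$ and $xqy$ are trees in $\langle U\rangle_r \cup D$ that are not equal to a common element of $D$, the trees $x\,pq\,y$ and $x\,qp\,y$ again lie in $\langle U\rangle_r$. If $xpy, xqy \in \langle U\rangle_r$ this is just repleteness of $\langle U\rangle_r$. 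If at least one of them lies in $D$, it is exactly the reasoning of Remark \ref{rem:stability} (that is, the argument of Lemma \ref{lem:uniformrepleteness}) applied to the dominated pair $(\langle U\rangle_r, D)$: the product $(xpy)(xqy)$ lies in the subsemigroup $\langle U\rangle_r \cup D$, and its alphabet strictly contains the minimal alphabet of whichever factor is in $D$, so it lands in a uniform replete component of $\langle U\rangle_r$; working in that component forces the left and right branches of $x\,pq\,y$ and $x\,qp\,y$ to be present, whence both trees lie in $\langle U\rangle_r$.

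With this in hand, an induction along the chain $h_0 \to \cdots \to h_m$ gives $\supp(h_i) \subseteq \langle U\rangle_r \cup D$ for all $i$: the base case is $\supp(h_0) = \supp(ff') = U \cup D$, and each expansion $h_i \to h_{i+1}$ introduces only the trees $x\,pq\,y$ and $x\,qp\,y$ for which $xpy + xqy$ is a summand of $h_i$, so $xpy, xqy \in \langle U\rangle_r \cup D$ by the inductive hypothesis; they cannot both equal a single element of $D$ since a straggler has coefficient $1$ and so cannot appear twice in a summand, and therefore the closure property places the new trees in $\langle U\rangle_r$. Taking $i = m$ yields $S \sqcup D = \supp(f'') \subseteq \langle U\rangle_r \cup D$, and since $\langle U\rangle_r \subseteq S$ is disjoint from $D$ we conclude $S \subseteq \langle U\rangle_r$, hence equality. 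The one genuinely delicate point is the closure property in the case where a straggler is involved; the remainder is bookkeeping, though care is needed to confirm that the two reductions in the second paragraph are legitimate and that the degenerate expansion "$xpy = xqy \in D$" is correctly excluded.
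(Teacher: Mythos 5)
Your proposal is correct and follows essentially the same route as the paper's (much terser) proof: both argue that every expansion applied along the way from $ff'$ to $f''$ can only introduce trees already forced into any replete subsemigroup containing $U$ and dominating $D$ (via Proposition \ref{prop:dompair}, Remark \ref{rem:stability} and repleteness of $\supp({f''}^{\mathrm{w/o}})$), so that $\supp({f''}^{\mathrm{w/o}})$ is sandwiched between $\langle U \rangle_r$ and itself. Your version simply makes explicit the details the paper leaves implicit, notably the reduction to an expansion chain via the invariance lemmas and the exclusion of the degenerate expansion using a straggler twice, both of which check out.
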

\begin{proof}
Definition \ref{defn:complementary} is such that any expansion of $ff'$ (resp. of $f+f'$) produces elements which are necessarily contained in any replete subsemigroup $S$ containing $U$ and dominating $D$. Since $f''$ is obtained by expansions, $\supp(f'')$ must coincide with the smallest such subsemigroup. 
\end{proof}

Finally, we need to determine the parity function for the sum and product. This is straightforward. It is again convenient to observe that given $f,f'$ maximal, the parity assigned to uniform subsemigroups in the support of $f+f'$ only increases by $0$ or $2$ with each expansion, from which we deduce the following.

\begin{lem}
Let $f,f',f''$ be maximal thickets such that $f'' \approx ff'$ (resp. $f'' \approx f+f'$) and let $S = \supp({f''}^{\mathrm{w/o}})$, as calculated in Corollary \ref{cor:domops}. Then for $A \in \bar{\al}(S)$, $P(f''_A) = 3$ if and only if $P((ff')_A) = 1$ or $3$, and $P(f''_A)=2$ otherwise. The analogous statement holds when $f'' \approx f+f'$.
\end{lem}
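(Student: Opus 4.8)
The plan is to leverage the structural observation recorded just before the statement: when $f$ is maximal and $g \approx f$, each expansion applied in passing from $g$ to $f$ increases the parity $P(g_A)$ by $0$ or $2$ (in $\Nbb_{2,2}$) for each alphabet $A$, never changing it mod $2$. In particular, starting from $g := ff'$ (computed as a thicket, which we may take to have all coefficients reduced into $\Nbb_{2,2}$) and expanding all the way up to the maximal thicket $f''$, the parity $P(f''_A)$ is determined by $P((ff')_A)$ together with whether any expansion ever touches alphabet $A$. The key point is that for $A \in \bar{\al}(S)$ with $S = \supp({f''}^{\mathrm{w/o}})$ — i.e.\ $A$ is \emph{not} the alphabet of a straggler — at least one expansion \emph{does} increase $P(f''_A)$, forcing the final value into $\{2,3\}$, and which of the two it is is fixed mod $2$ by $P((ff')_A)$.

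So the steps are as follows. First I would fix $A \in \bar{\al}(S)$ and note, via Lemma~\ref{lem:prodstragglers}, that $A$ is not the alphabet of any straggler of $f''$; hence $P(f''_A) \neq 1$, and by Lemma~\ref{lem:fixedparity} (applied to the maximal representative $f''$ and any other maximal representative), $P(f''_A) \in \{2,3\}$. Second, I would invoke the pre-statement observation that each expansion changes $P(g_A)$ by $0$ or $2$, so $P(f''_A) \equiv P((ff')_A) \pmod 2$; combined with $P(f''_A) \in \{2,3\}$ this pins down $P(f''_A) = 3$ when $P((ff')_A)$ is odd (i.e.\ $1$ or $3$) and $P(f''_A) = 2$ when $P((ff')_A)$ is even (i.e.\ $0$ or $2$). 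Third, I would record that the sum case is identical, replacing $ff'$ by $f + f'$ throughout and citing Lemma~\ref{lem:sumstragglers} in place of Lemma~\ref{lem:prodstragglers}; the pre-statement remark about parity increasing only by $0$ or $2$ under expansion is stated for sums as well as products.

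The one point that needs a little care — and is the main obstacle, though a mild one — is justifying that $P((ff')_A)$ (computed in $\Nbb_{2,2}$) is actually well-defined as the starting value before expansions, and that expanding $ff'$ to a maximal thicket genuinely reaches a representative of the product whose support agrees with $S$ on the non-straggler part. This is exactly what Corollary~\ref{cor:domops} provides: $\supp({f''}^{\mathrm{w/o}}) = \langle U\rangle_r$ with $U = \supp(ff')\setminus D$, so every $A \in \bar{\al}(S)$ already appears in $\bar{\al}(\supp(ff'))$, ensuring $P((ff')_A)$ makes sense and that the expansion process touching $A$ is available. I expect the whole argument to be two short paragraphs once written out, essentially a bookkeeping consequence of the monotonicity-under-expansion of apparity together with the straggler analysis already in place.

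\begin{proof}
We treat the product case; the sum case is identical with $ff'$ replaced by $f+f'$ and Lemma~\ref{lem:sumstragglers} used in place of Lemma~\ref{lem:prodstragglers}. Form the thicket $ff' \in \Nbb_{2,2}[T_n]$ and expand it to the maximal thicket $f''$. Fix $A \in \bar{\al}(S)$. By Corollary~\ref{cor:domops}, $S = \supp({f''}^{\mathrm{w/o}}) = \langle U\rangle_r$ where $U = \supp(ff') \setminus D$, so in particular $A \in \bar{\al}(\supp(ff'))$ and $P((ff')_A)$ is meaningful. Moreover $A$ is not the alphabet of any straggler of $f''$, so $P(f''_A) \neq 1$; since $f''$ is maximal, Remark~\ref{rem:treecoeffs} and Lemma~\ref{lem:fixedparity} give $P(f''_A) \in \{2,3\}$.

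Now recall the observation preceding this lemma: whenever $g \approx f''$ is obtained from $f''$ by reverse expansions (equivalently, $f''$ from $g$ by expansions), each expansion increases the apparity of exactly one uniform summand by $2$, leaving all parities fixed modulo $2$. Applying this to the chain of expansions from $ff'$ up to $f''$, we obtain $P(f''_A) \equiv P((ff')_A) \pmod 2$. Combining with $P(f''_A) \in \{2,3\}$: if $P((ff')_A)$ is odd, i.e.\ equal to $1$ or $3$ in $\Nbb_{2,2}$, then $P(f''_A) = 3$; if $P((ff')_A)$ is even, i.e.\ equal to $0$ or $2$, then $P(f''_A) = 2$. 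This is precisely the claimed description, and the same reasoning with $f+f'$ in place of $ff'$ settles the sum case.
\end{proof}
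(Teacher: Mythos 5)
Your proposal is correct and is essentially the paper's own argument: the paper leaves this lemma to the observation stated immediately before it (each expansion adds exactly two trees on a single alphabet, so apparity only changes by $0$ or $2$ per uniform summand), combined with the straggler analysis of Lemma \ref{lem:prodstragglers}/\ref{lem:sumstragglers} to rule out the value $1$ on alphabets in $\bar{\al}(S)$, which is precisely what you spell out. The only cosmetic quibble is that citing Lemma \ref{lem:fixedparity} for $P(f''_A)\in\{2,3\}$ is unnecessary --- $A\in\bar{\al}(S)$ already forces $P(f''_A)\neq 0$ and non-stragglerhood forces $P(f''_A)\neq 1$ --- but this does not affect the argument.
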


We summarize these results in the language of complementary triples.

\begin{thm}
\label{thm:ops}
Let $(S,D,p),(S',D',p') \in C_n$ be complementary triples. Let
\begin{align*}
B &:= \{tt' \in DD' \mid \forall s \in S \cup D, \, \forall s' \in S' \cup D', \, \al(ss') \subseteq \al(tt') \Rightarrow s=t \text{ and } s'=t'\}, \\
E &:= \{t \in D \mid \forall t' \in S' \cup D', \, \al(t') \not\subseteq \al(t)\} \cup \{t' \in D' \mid \forall t \in S \cup D, \, \al(t) \not\subseteq \al(t')\}.
\end{align*}
Then the product operation on $C_n$ inherited from $R_n$ satisfies:
\[(S,D,p) \cdot (S',D',p') = (\langle (S \cup D)\cdot(S' \cup D') \backslash B\rangle_r, B, pp'),\] where $pp'(A) = \sum_{A_1 \cup A_2 = A}p(A_1)p'(A_2)$, reduced modulo $2$. Meanwhile, the sum on $C_n$ is given by:
\[(S,D,p) + (S',D',p') = (\langle (S \cup D) \cup (S' \cup D') \backslash E \rangle_r, E, p+p'),\]
where $p+p'(A):= p(A) + p'(A)$, reduced modulo $2$.

Finally, $0 \in R_n$ corresponds to $(\emptyset,\emptyset,0)$ and $1 \in R_n$ corresponds to $(\emptyset,\{1\},\chi_{\emptyset})$, where $\chi_{\emptyset}$ is the characteristic function for the empty set.
\end{thm}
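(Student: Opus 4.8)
The statement is a repackaging of the lemmas of this subsection via the bijection of Theorem~\ref{thm:complements}, so the plan is essentially to carry out that translation carefully. Fix complementary triples $(S,D,p)$ and $(S',D',p')$ and let $x,x'\in R_n$ be the elements they correspond to. Choose maximal thickets $f,f'$ representing $x,x'$; then $S=\supp(f^{\mathrm{w/o}})$, $D=\supp(f^{\mathrm{str}})$, $P(f_A)\equiv p(A)\pmod 2$ for all $A$, and similarly for the primed data. By Theorem~\ref{thm:complements} again, the triple attached to $xx'$ (resp.\ $x+x'$) is obtained from any maximal thicket $f''$ with $f''\approx ff'$ (resp.\ $f''\approx f+f'$) as $\bigl(\supp({f''}^{\mathrm{w/o}}),\,\supp({f''}^{\mathrm{str}}),\,A\mapsto P(f''_A)\bmod 2\bigr)$. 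So there are three pieces of data to identify, and I would handle the product and the sum in parallel.

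For the product: Lemma~\ref{lem:prodstragglers} computes $\supp({f''}^{\mathrm{str}})$ as the set of products $tt'$ with $t\in\supp(f^{\mathrm{str}})=D$ and $t'\in\supp({f'}^{\mathrm{str}})=D'$ satisfying a loneliness condition quantified over $s\in\supp(f)=S\cup D$ and $s'\in\supp(f')=S'\cup D'$, which is literally the defining condition of $B$. For the non-straggler part, Corollary~\ref{cor:domops} gives $\supp({f''}^{\mathrm{w/o}})=\langle U\rangle_r$ with $U=\supp(ff')\backslash B$, and since multiplication in $\Nbb_{2,2}[T_n]$ distributes over summands and no sum or product of nonzero coefficients vanishes in $\Nbb_{2,2}$, we have $\supp(ff')=\{ss'\mid s\in S\cup D,\ s'\in S'\cup D'\}=(S\cup D)(S'\cup D')$ (the set of products), the asserted first component. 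For the parity: on $\bar{\al}(\supp({f''}^{\mathrm{w/o}}))$ the last lemma above gives $P(f''_A)\equiv P((ff')_A)\pmod 2$, while on $\bar{\al}(B)$ the value is forced to be $1$ by the straggler condition; decomposing into uniform summands as in~\eqref{eq:uniform} and using $\al(ww')=\al(w)\cup\al(w')$ from Lemma~\ref{lem:decompose} gives $(ff')_A=\sum_{A_1\cup A_2=A}f_{A_1}f'_{A_2}$, so applying the rig homomorphism $P$ and then reduction modulo $2$ (itself a rig homomorphism $\Nbb_{2,2}\to\Nbb_{0,2}\cong\Zbb/2\Zbb$) yields $\sum_{A_1\cup A_2=A}p(A_1)p'(A_2)\bmod 2=pp'(A)$. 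A small check confirms this formula does equal $1$ on $\bar{\al}(B)$, consistent with the parity constraint there.

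The sum is identical, with Lemma~\ref{lem:sumstragglers} in place of Lemma~\ref{lem:prodstragglers} (its two clauses matching the two halves of $E$), with $\supp(f+f')=(S\cup D)\cup(S'\cup D')$ in Corollary~\ref{cor:domops}, and with $(f+f')_A=f_A+f'_A$ giving parity $p(A)+p'(A)\bmod 2$. For the units: $0\in R_n$ is represented by the empty thicket, which admits no expansion, hence is maximal with empty straggler and non-straggler supports and zero parity, so it corresponds to $(\emptyset,\emptyset,0)$; and $1\in R_n$ is represented by the thicket consisting of one copy of the trivial tree $1=()\in T_n$, which is likewise maximal, and since $()$ is lonely with $\al(())=\emptyset$ minimal in $\{\emptyset\}$ it is a straggler, whence $D=\{1\}$, $S=\emptyset$, and $P(f_A)=1$ exactly when $A=\emptyset$, i.e.\ $p=\chi_{\emptyset}$.

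I do not anticipate a serious obstacle: the substantive content --- how stragglers behave under products and sums, and the fact (Remark~\ref{rem:repletealphabets}) that passing to the replete closure $\langle U\rangle_r$ does not create new alphabets, which is what makes the relevant pair into a dominating pair --- is already isolated in Lemmas~\ref{lem:prodstragglers}--\ref{lem:sumstragglers}, Corollary~\ref{cor:domops} and the last lemma. The points deserving care are the bookkeeping identification of $B$ and $E$ with the straggler sets of those lemmas (which is why I would write the substitutions $\supp(f)=S\cup D$ and $\supp(f')=S'\cup D'$ out explicitly) and the verification that reduction modulo $2$ is compatible with the $\Nbb_{2,2}$-valued coefficient arithmetic, so that the parity of a product of thickets is the convolution $pp'$.
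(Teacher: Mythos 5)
Your proposal is correct and matches the paper's intent exactly: the paper offers no separate argument for this theorem, presenting it as a summary of Lemmas \ref{lem:prodstragglers} and \ref{lem:sumstragglers}, Corollary \ref{cor:domops} and the parity lemma, translated through the bijection of Theorem \ref{thm:complements}, which is precisely the translation you carry out (including the useful explicit checks that coefficients cannot cancel in $\Nbb_{2,2}$ and that $pp'$ equals $1$ on $\bar{\al}(B)$).
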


We note that the convenient expressions for the parity functions motivated our choice of presentation for complementary triples.

\section{More mirigs}
\label{sec:examples}

To conclude this article, we return to considering mirigs beyond free mirigs.

\subsection{Various characteristics}

\begin{exa}
As mentioned in Remark \ref{rem:terminology}, (additively) \textit{idempotent semirings} refer to rigs in which $x+x=x$ for all elements $x$. This in particular forces $1+1 = 1$, so these are precisely the rigs having characteristic $(1,1)$ (or $(0,1)$). Additively idempotent mirigs include \textbf{distributive lattices} with top and bottom elements $1,0$ and $+,\cdot$ taken to be $\vee,\wedge$ respectively: both operations are idempotent and $\wedge$ distributes over $\vee$.
\end{exa}




\begin{exa}
\label{exa:(1,1)}
Consider the quotient of a free mirig $R_n$ by $1 \sim 1+1$. Examining Theorem \ref{thm:ops}, we see that this corresponds to identifying a complementary triple $(S,D,p)$ with the triple $(\langle S \cup D \rangle_r,\emptyset,0)$. In other words, the elements of the resulting mirig are in bijection with the replete subsemigroups of $T_n$, with operations
\[S \cdot S' = \langle \{ss' \mid s \in S, s' \in S'\} \rangle_r\]
\[S + S' = \langle S \cup S' \rangle_r. \]
Observe that the multiplication is not commutative, since for distinct generators $a,b$, the replete subsemigroups $\{a\},\{b\}$ have distinct products $\{ab\}$ and $\{ba\}$ depending on the order of multiplication. Thus a rig (or mirig) of characteristic $(1,1)$ need not be commutative, although other known structures (such as non-commutative \textit{quantales}) already witness this fact.
\end{exa}

This example raises the question of what can be said for the other characteristics. We of course discard the degenerate case $(0,1)$. For brevity, we write $R^{(p,q)}_n$ for the free mirig of characteristic $(p,q)$ (that is, $R_n$ quotiented by the congruence generated by $p+q \sim p$), where $(p,q)$ varies amongst the characteristics from Lemma \ref{lem:char}. From the above example and Section \ref{ssec:replete}, we already have the following cardinalities:
\begin{equation}
	|R^{(1,1)}_n| = 2, 4, 42, 18030, \dotsc
\end{equation}

\begin{exa}
\label{exa:(2,1)}
Intermediate between free mirigs and mirigs satisfying $x+x=x$ are those of characteristic $(2,1)$ where $3x = 2x$. Considering maximal thickets representing elements of $R_n$, we see that this identity flattens the distinction between the even and odd coefficients of any non-stragglers. In terms of complementary triples, this amounts to dropping parity functions, so the free mirig on $[n]$ subject to the identity $2 \sim 3$ can be identified with the collection of pairs $(S,D)$ where $S \leq T_n$ is a replete subsemigroup and $D \subseteq T_n$ is a sparse subset dominated by $S$.

To count these, we adapt Proposition \ref{prop:Rn2} and the subsequent argument, removing the term counting the number of parity functions, to obtain:
\begin{equation}
\label{eq:R(2,1)n}
|R^{(2,1)}_n| = \sum_{\substack{\text{replete }S \leq T_n \\ \emptyset \notin \bar{\al}(S)}} \left[ 2 + \sum_{E \subseteq \bar{\alpha}_m(S)} \prod_{A \in E} \prod_{j=1}^{|A|-1} j^{2^{(|A|-j+1)}-2} \right],
\end{equation}
where the first term counts when $1 \in \langle S \cup D \rangle_r$, so either $1 \in S$ and $D = \emptyset$ or $1 \notin S$ and $D = \{1\}$, and the second term counts when $1 \notin \langle S \cup D \rangle_r$. For $n \leq 3$ the product appearing in this expressiong (which we again abbreviate to $Q_E$) reduces to $1$ except in the exceptional case that $\bar{\al}_m(S) = [3]$, when it evaluates to $4$. Thus for all but the exceptional case, the inner sum reduces to a count of the subsets of $\bar{\al}_m$.

For $n=0$ we recover $\Nbb_{2,1}$. For $n=1$, we can more directly count by extending Example \ref{exa:R1}: of the $9$ elements of $\Nbb_{2,1}[M_1]$, $x^2 = x$ identifies $1+a$ with $1+2a$ and $2+a$ with $2+2a$, leaving $7$ distinct elements of $R^{(2,1)}_1$.

For $R^{(2,1)}_2$ and $R^{(2,1)}_3$, we proceed as we did for $R_2$ and $R_3$, respectively, constructing a table to compute the sum by alphabets. See Tables \ref{table:R(2,1)2count} and \ref{table:R(2,1)count}.
\begin{table}[ht]
\centering
\DivWidth{\colOne}{$\{\{a,b\}\}$}{2}
\begin{tabu}{ l |c|c|C{\colOne}|C{\colOne}|c|c|}
$\bar{\al}(S)$ & $\emptyset$ & $\{\{a\}\}$ & \multicolumn{2}{c|}{$\{\{a,b\}\}$} & $\{\{a\},\{a,b\}\}$ & $\{\{a\},\{b\},\{a,b\}\}$\\
\hline
Factor     & $1$ & $2$ & \multicolumn{2}{c|}{$1$} & $2$ & $1$ \\
\hline
Replete $S$  & $1$ & $1$ & $4$ & $5$ & $4$ & $1$ \\
$2 + \sum Q_E$ & $3$ & $4$ & $4$ & $3$ & $4$ & $6$ \\
\hline
Contribution & $3$ & $8$ & \multicolumn{2}{c|}{$31$} & $32$ & $6$
\end{tabu}
\caption{An accounting of pairs $(S,D)$ with $D \subseteq T_2$ sparse and $S \leq T_2$ dominating $D$, tabulated analogously to Table \ref{table:R2count0}.}
\label{table:R(2,1)2count}
\end{table}
In summary, we find that:
\begin{equation}
\label{eq:R(2,1)count}
|R^{(2,1)}_n| = 3,7,80,40601,\dotsc
\end{equation}
\end{exa}

\begin{exa}
\label{exa:(1,2)}
Forcing the characteristic to be $(1,2)$ (so imposing $3x = x$) instead enables us to treat lonely trees as if they were duplicated (or rather, triplicated). As such, a maximal thicket in $\Nbb_{1,2}[T_n]$ representing an element of $R^{(1,2)}_n$ must have a replete subsemigroup of summands. On the other hand, parity functions are respected. We conclude that elements of $R^{(1,2)}_n$ are in bijection with the collection of pairs $(S,p)$ where $S \leq T_n$ is a replete subsemigroup and $p: \Pcal([n]) \to \Nbb_{0,2}$ is a parity function whose value is $0$ on the complement of $\bar{\al}(S)$. Separating the cases with and without the identity, we have:
\begin{equation}
\label{eq:R(1,2)}
|R^{(1,2)}_n| = 3\sum_{\substack{\text{replete }S \leq T_n \\ \emptyset \notin \bar{\al}(S)}} 2^{|\bar{\al}(S)|},
\end{equation}
where we move the coefficient of $3$ outside the sum to deduce that $|R^{(1,2)}_n|$ is always a multiple of $3$. The story is simple for $n=0$ and $1$, recovering $\Nbb_{1,2}$ and $\Nbb_{1,2}[T_n]$, respectively (in the latter case, no further identifications are made by imposing $x^2 = x$). For $n = 2$ and $3$, we can directly reuse the counts in Tables \ref{table:R2count0} and \ref{table:R3count}. In so doing, we obtain:
\begin{equation}
|R^{(1,2)}_n| = 3,9,189,160389,\dotsc
\end{equation}
\end{exa}

Implicit in Examples \ref{exa:(1,1)}, \ref{exa:(2,1)} and \ref{exa:(1,2)} is the fact that imposing the characteristic identity leaves the set of alphabets of a maximal thicket invariant, since equivalent thickets can only be produced from (or absorbed into) trees which are summands of that thicket. As such, the arguments of Section \ref{ssec:forests} and \ref{ssec:ops} can be reapplied to arrive at the given conclusions. For the final characteristic $(0,2)$ this is no longer the case, since for any forest $f$ and tree $t$, $f \simeq' f+t+t$. However, we have already seen in Section \ref{sec:Boole} that in this case we are working with Boolean rings, which are commutative, so all trees on the same alphabet become equivalent. Since the parity function of a complementary triple is respected by the reduction to characteristic $(2,0)$, we arrive at the following simple presentation.

\begin{exa}
\label{exa:(0,2)}
Imposing the equation $2x = 0$ reduces $R_n$ to the free Boolean ring on $n$ generators, $\Pcal(\Pcal([n]))$.\footnote{Beware that for infinite sets the powerset functor should be replaced for the `finite powerset' functor returning the set of finite subsets.} The quotient map sends a complementary triple $(S,D,p)$ to $p$, viewed as the characteristic function of a subset of $\Pcal([n])$. For completeness, we record:
\begin{equation}
\label{eq:R(2,0)}
|R^{(2,0)}_n| = 2,4,16,256,\dotsc
\end{equation}
\end{exa}

\begin{landscape}
\begin{table}[ht]
\centering
\begin{tabu}{ l |c|c|c|c|c|c|}
$\bar{\al}(S)$ & $\emptyset$ & $\{\{a\}\}$ & \multicolumn{2}{c|}{$\{\{a,b\}\}$} & $\{\{a\},\{a,b\}\}$ & $\{\{a\},\{b\},\{a,b\}\}$\\
\hline
Factor & $1$ & $3$ & \multicolumn{2}{c|}{$3$} & $6$ & $3$ \\
\hline
Replete $S$  & $1$ & $1$ & $4$ & $5$ & $4$ & $1$ \\
$2 + \sum Q_E$ & $3$ & $4$ & $4$ & $3$ & $4$ & $6$\\
\hline
Contribution & $3$ & $12$ & \multicolumn{2}{c|}{$93$} & $96$ & $18$
\end{tabu}
\\
\vspace{6pt}
\DivWidth{\colOne}{$12\{\{a,b\}\}12$}{2}
\begin{tabu}{ l |c|c|c|c|c|c|c|c|c|c|c|c|c|}
$\bar{\al}(S) \backslash \{[3]\}$ & \multicolumn{2}{c|}{$\emptyset$} & $\{\{a\}\}$ & \multicolumn{2}{c|}{$\{\{a,b\}\}$} & $\{\{a\},\{a,b\}\}$ & \multicolumn{2}{c|}{$\{\{a\},\{b,c\}\}$} & \multicolumn{3}{c|}{$\{\{a,b\},\{b,c\}\}$} & $\{\{a\},\{b\},\{a,b\}\}$
\\
\hline
Factor          & \multicolumn{2}{c|}{$1$}  & $3$  & \multicolumn{2}{c|}{$3$} & $6$ & \multicolumn{2}{c|}{$3$} & \multicolumn{3}{c|}{$3$} & $3$ \\
\hline
Replete $S$ & $36$ & $448$ & $144$ & $324$ & $205$ & $196$ & $36$ & $13$ & $196$           & $256$             & $77$  & $25$ \\
$2 + \sum Q_E$ & $7$ & $3$ & $4$ & $4$ & $3$ & $4$ & $6$ & $4$ & $6$ & $4$ & $3$ & $6$ \\
\hline
Contribution & \multicolumn{2}{c|}{$1596$} & $1728$ & \multicolumn{2}{c|}{$5733$} & $4704$ & \multicolumn{2}{c|}{$804$}& \multicolumn{3}{c|}{$7293$} & $450$
\end{tabu}
\\
\vspace{6pt}
\DivWidth{\colOne}{$\{\{a\},\{a,b\},\{b,c\}\}$}{2}
\begin{tabu}{ l |C{\colOne}|C{\colOne}|c|c|c|c|c|c|}
$\bar{\al}(S) \backslash \{[3]\}$ & \multicolumn{2}{c|}{$\{\{a\},\{a,b\},\{b,c\}\}$} & $\{\{b\},\{a,b\},\{b,c\}\}$ & $\{\{a\},\{b\},\{a,b\},\{b,c\}\}$ & \multicolumn{4}{c|}{$\{\{a,b\},\{b,c\},\{a,c\}\}$} \\
\hline
Factor          & \multicolumn{2}{c|}{$6$}
                & $3$  & $6$ & \multicolumn{4}{c|}{$1$}\\
\hline
Replete $S$ & $81$  & $40$   & $169$ & $16$  & $196$           & $495$             & $450$           & $155$ \\
$2 + \sum Q_E$ & $6$ & $4$ & $4$ & $6$ & $10$ & $6$ & $4$ & $3$ \\
\hline
Contribution & \multicolumn{2}{c|}{$3876$} & $2028$ & $576$ & \multicolumn{4}{c|}{$7195$}
\end{tabu}
\\
\vspace{6pt}
\DivWidth{\colOne}{$\{\{a\},\{a,b\},\{b,c\},\{a,c\}\}$}{2}
\begin{tabu}{ l |C{\colOne}|C{\colOne}|c|c|}
$\bar{\al}(S) \backslash \{[3]\}$ & \multicolumn{2}{c|}{$\{\{a\},\{a,b\},\{b,c\},\{a,c\}\}$} & $\{\{a\},\{b\},\{a,b\},\{b,c\},\{a,c\}\}$ & all \\
\hline
Factor          & \multicolumn{2}{c|}{$3$} & $3$ & $1$ \\
\hline
Replete $S$ & $144$           & $112$    & $25$     & $1$  \\
$2 + \sum Q_E$ & $6$ & $4$ & $6$ & $10$ \\
\hline
Contribution & \multicolumn{2}{c|}{$3936$} & $450$ & $10$
\end{tabu}
\caption{An accounting of pairs $(S,D)$ with $D \subseteq T_3$ sparse and $S \leq T_3$ replete dominating $D$, tabulated analogously to Table \ref{table:R3count}.}
\label{table:R(2,1)count}
\end{table}
\end{landscape}

\subsection{Returning to Boolean semirings}

\begin{exa}
Related to Example \ref{exa:(1,2)} are Guzm\'{a}n's Boolean semirings, where we impose $1 + x+x = 1$ (which implies $1+1+1 = 1$). Representing elements of $R^{(1,2)}_n$ as pairs $(S,p)$, we see that $(S,p)$ is related to $(S',p)$, where $S'$ is the maximal subsemigroup over $S$, meaning:
\[S'_{A'} =
\begin{cases}
(T_n)_{A'} & \text{ if }\exists A \in \bar{\al}(S), \, A \subseteq A' \\
\emptyset & \text{ otherwise.}
\end{cases}\]
This can be argued directly by expanding to obtain maximal thickets in $\Nbb_{1,2}[T_n]$ representing the elements, and it is easy to see that distinct such semigroups represent distinct elements. As such, we can essentially ignore the content of the uniform subsemigroups and reduce to the corresponding alphabets. Any such is an upward-closed subsets in $\Pcal([n])$, and conversely. As such, if $B_n$ denotes the free Boolean semiring on $n$ generators,
\begin{equation}
|B_n| = \sum_{\substack{U \subseteq \Pcal([n])\\ \text{upward closed}}} 2^{|F|}.	
\end{equation}
These are even more closely related to the Dedekind numbers discussed in Remark \ref{rem:hardcount}, since upward-closed subsets are determined by a choice of antichain! One can also see that these will always have odd cardinality, since the empty subset provides the only odd summand. By manual calculation, we have:
\begin{equation}
|B_n| = 3,7,35,775,\dotsc
\end{equation}
\end{exa}

We leave open the combinatorial investigation of other variants of mirigs to the curious reader.

\bibliographystyle{alpha}
\bibliography{idmonbib}

\end{document}